\newtheorem{theorem}{Theorem}[section]
\newtheorem*{theorem*}{Theorem}
\newtheorem{proposition}[theorem]{Proposition}
\newtheorem{coro}[theorem]{Corollary}
\newtheorem{lemma}[theorem]{Lemma}
\newtheorem{rem}[theorem]{Remark}
\newtheorem{definition}[theorem]{Definition}
\newtheorem{assum}{Assumption}[section]
\newtheorem{assum-T}{Assumptions on $T$}
\newtheorem*{assum-L}{Assumptions on $L$}
\theoremstyle{plain}
\renewcommand{\epsilon}{\varepsilon}
\newcommand{\eps}{\epsilon}
\newcommand{\dive}{\textrm{div}}
\newcommand{\calD}{\mathcal{D}}
\newcommand{\calS}{\mathcal{S}}
\newcommand{\mcD}{{\mathcal D}}
\newcommand{\mcQ}{{\mathcal Q}}
\newcommand{\mcS}{{\mathcal S}}
\newcommand{\ch}{{\mathrm ch}}
\DeclareMathOperator*{\essinf}{ess\,inf}
\DeclareMathOperator*{\esssup}{ess\,sup}
\newcommand\C{\mathbb{C}}
\newcommand\N{\mathbb{N}}
\newcommand\R{\mathbb{R}}
\newcommand\Z{\mathbb{Z}}
\newcommand{\calM}{\ensuremath{\mathcal{M}}}
\DeclareMathOperator{\supp}{supp}
\DeclareMathOperator{\loc}{loc}
\DeclareMathOperator{\diam}{diam}
\DeclareMathOperator{\ball}{ball}
\newcommand{\Eins}{\ensuremath{\mathds{1}}}
\def\Xint#1{\mathchoice
   {\XXint\displaystyle\textstyle{#1}}%
   {\XXint\textstyle\scriptstyle{#1}}%
   {\XXint\scriptstyle\scriptscriptstyle{#1}}%
   {\XXint\scriptscriptstyle\scriptscriptstyle{#1}}%
   \!\int}
\def\XXint#1#2#3{{\setbox0=\hbox{$#1{#2#3}{\int}$}
     \vcenter{\hbox{$#2#3$}}\kern-.5\wd0}}
\def\aver#1{\Xint-_{#1}}
\let\original@addcontentsline\addcontentsline
\newcommand{\dummy@addcontentsline}[3]{}
\newcommand{\DeactivateToc}{\let\addcontentsline\dummy@addcontentsline}
\newcommand{\ActivateToc}{\let\addcontentsline\original@addcontentsline}
\newcommand\restr[2]{{
  \left.\kern-\nulldelimiterspace 
  #1 
  \vphantom{\big|} 
  \right|_{#2} 
  }}
\numberwithin{theorem}{section}
\numberwithin{equation}{section}
\title[Sharp weighted estimates]{Sharp weighted norm estimates beyond Calder\'on-Zygmund theory}
\author[Fr\'ed\'eric Bernicot, Dorothee Frey, Stefanie Petermichl]{Fr\'ed\'eric Bernicot, Dorothee Frey, Stefanie Petermichl}
\address{Fr\'ed\'eric Bernicot, CNRS - Universit\'e de Nantes, Laboratoire Jean Leray, 2 rue de la Houssini\`ere, 44322 Nantes cedex 3, France\\}
\email{frederic.bernicot@univ-nantes.fr}
\address{Dorothee Frey, Delft Institute of Applied Mathematics, Delft University of Technology, P.O. Box 5031, 2600 GA Delft, The Netherlands\\}
\email{d.frey@tudelft.nl}
\address{Stefanie Petermichl, IMT Universit\'e Paul Sabatier, 118 Route de Narbonne, 31062 Toulouse cedex 9, France\\}
\email{stefanie.petermichl@math.univ-toulouse.fr}
\thanks{This project is partly supported by ANR projects AFoMEN no. 2011-JS01-001-01 and HAB no. ANR-12-BS01-0013, as well as the ERC project FAnFArE no. 637510. The third author is a member of IUF}
\date{\today}
\begin{document}

\begin{abstract}  
We dominate non-integral singular operators by adapted sparse operators and derive optimal norm estimates in weighted spaces. Our assumptions on the operators are minimal and our result applies to an array of situations, whose prototype are Riesz transforms / multipliers or paraproducts associated with a second order elliptic operator. It also applies to such operators whose unweighted continuity is restricted to Lebesgue spaces with certain ranges of exponents $(p_0,q_0)$ where $1\le p_0<2<q_0\le \infty$. The norm estimates obtained are powers $\alpha$ of the characteristic used by Auscher and Martell. The critical exponent in this case is $\mathfrak{p}=1+\frac{p_0}{q'_0}$. We prove $\alpha=\frac1{p-p_0}$ when $p_0<p\le \mathfrak{p}$ and $\alpha= \frac{q_0-1}{q_0-p}$ when $\mathfrak{p}\le p<q_0$. 
In particular, we are able to obtain the sharp $A_2$ estimates for non-integral singular  operators which do not fit into the class of Calder\'on-Zygmund operators. These results are new even in the Euclidean space and are the first ones for operators whose kernel does not satisfy any regularity estimate.
\end{abstract}

\subjclass[2010]{58J35, 42B20}

\keywords{Singular operators, weights}

\maketitle

\section{Introduction}

In the last ten years, it has been of great interest to obtain optimal operator norm estimates in Lebesgue spaces endowed with Muckenhoupt weights. One asks for the growth of the norm of certain operators, such as the Hilbert transform or the Hardy-Littlewood maximal function, with respect to a characteristic assigned to the weight. Originally, the main motivation for sharp estimates of this type came from certain important applications to partial differential equations. See for example  Fefferman-Kenig-Pipher \cite{FKP}, Astala-Iwaniec-Saksman \cite{AIS}. Indeed, a long standing regularity problem has been solved through the optimal weighted norm estimate of the Beurling-Ahlfors operator, a classical Calder\'on-Zygmund operator. See  Petermichl-Volberg \cite{PV}. Since then, the area has been developing rapidly.  Advances have greatly improved conceptual understanding of classical objects such as Calder\'on-Zygmund operators. The latter are now understood in several different ways, one of them being through pointwise control by so-called sparse operators, see most recently Lacey \cite{Lacey}, Lerner-Nazarov \cite{LN}.  We bring this circle of ideas to the wide range of non-integral singular operators, such as considered in Auscher-Martell \cite{AM}. Under minimal assumptions, we now demonstrate control by well chosen sparse operators and derive optimal norm estimates in weighted spaces. 

\medskip

From a historic standpoint, starting in 1973 when Hunt-Muckenhoupt-Wheeden  \cite{HMW} proved that in the Euclidean space, the Hilbert transform is bounded on $L^2_\omega$ if and only if the weight $\omega$ satisfies the so-called $A_2$ condition. Then the extension for $p\in(1,\infty)$ of the class $A_p$ for weights have been legitimate by the characterization of the Hardy-Littlewood maximal operator on $L^p_{\omega}$. These classes as well as the ``dual classes'' $RH_q$ (describing a Reverse H\"older property) are only defined in terms of volume of balls, so this entire theory has been extended to the doubling framework. Calder\'on-Zygmund operators have been proved to be bounded on $L^p_\omega$ if $\omega\in A_p$.
More recently, the so-called $A_p$-conjecture (which is now solved) was about the sharp dependence of this operator-norm with respect to the $A_p$ characteristic of the weight. This conjecture was solved by Petermichl in \cite{Petermichl} for the Hilbert transform and then by Hyt\"onen in \cite{Hytonen} for arbitrary Calder\'on-Zygmund operators. Recently Lerner (\cite{Lerner1,Lerner2,Lerner3}) has obtained an alternate proof of this result, by exploiting the notion of {\it local mean oscillation} in order to control the norm of a Calder\'on-Zygmund operators by the norm of some specific operators, called {\it sparse operators}. After that and very recently, Lacey in \cite{Lacey} and Lerner-Nazarov \cite{LN} gave another proof, which gets around the use of local oscillation through pointwise control. 

\medskip

Simultaneously, during the last years, people were also interested in weighted estimates for non-integral singular operators in a space of homogeneous type. Even on the Euclidean space, Riesz transforms $\nabla L^{-1/2}$ may be considered in several situations where we do not have pointwise regularity estimates of an integral kernel, for examples $L=-\textrm{div}(A\nabla)$ with bounded coefficients $A$, $L=-\Delta+V$ with some potential $V$. The situation is even more difficult if we are looking at Riesz transforms on bounded subsets (with Neumann-Dirichlet conditions), second order elliptic operators on Lipschitz domains and Riesz transforms on Riemannian manifolds, etc. For all such operators,  there is only a range of exponents $(p_0,q_0)$ where we have $L^p$ estimates for the semigroup $(e^{-tL})_t$ and its gradient with $p\in(p_0,q_0)$. Weighted estimates for such operators are more delicate, naturally restricted to these same ranges of $p$. We refer the reader to \cite{AM} for a recent ``survey'' about weighted estimates. 

\medskip

In this current work, we aim to combine these two fashionable problems and give a modern approach to singular non-kernel operators. This setting had been resisting to many of the ideas developed in recent years. Indeed, we are going to adapt the very recent approach of Lacey \cite{Lacey}, in order to be able to deal with non-integral singular operators. The main idea relies on defining a suitable maximal operator and then to control the operator by {\it sparse operators} (whose definition is modified with respect to the previous works). We describe our method in a very general setting given by a space of homogeneous type, equipped with a semigroup. However, we point out that even in the Euclidean case, our results are new since they do not rely on any pointwise regularity estimates of the kernel of the considered operators.
Moreover, we modify the maximal operator that we are going to use: instead of the maximal truncated operator used by Lacey in \cite{Lacey}, we use truncation in the `frequency' point of view (where the notion of `frequencies' has to be understood in terms of the semigroup). Simultaneously, we will use a slightly weaker notion of sparse operators, both of these facts will allow us to give a proof which is simpler than Lacey's proof. However, we are not able to recover the full $A_2$ result in its generality: indeed the assumptions we need, requires that the considered operator satisfies a suitable decomposition in the `frequency point of view' (see Remark \ref{rem:CZO}). As shown in Section \ref{sec:example}, that covers the main prototypes of operators. It is interesting to observe that the proof of these sharp weighted estimates can be substantially simplified in our situation and extended to operators whose kernel does not satisfy any regularity estimate. Recently in \cite{BCDH}, the authors have extended Lerner's approach for operator with kernels having $L^{p_0}$-$L^\infty$ regularity estimates (which corresponds to the $q_0=\infty$ as we will see in Subsection \ref{subsec:fourier}). We emphasise that this work is the first one where we are able to consider the case $q_0<\infty$ and where no regularity is required on the eventual `kernel', which (as shown in the examples in Section \ref{sec:example}) will allow us to deal with various situations in terms of operators and ambient spaces. 

\subsection{The setting}

Let $M$ be a locally compact separable metric space equipped with a Borel measure $\mu$, finite on compact sets and strictly positive on any non-empty open set.
For $\Omega$ a measurable subset of $M$, we shall often denote $\mu\left(\Omega\right)$ by $\left|\Omega\right|$.

For all $x \in M$ and all $r>0$, denote by $B(x,r)$ the open ball for the metric $d$ with centre $x$ and radius $r$.
For a ball $B$ of radius $r$ and  $\lambda>0$, denote by $\lambda B$   the ball concentric  with $B$ and with radius $\lambda r$.  We sometimes denote by $r(B)$ the radius of the ball $B$. Finally, we will use $u\lesssim v$ to say that there exists a constant $C$ (independent of the important parameters) such that $u\leq Cv$ and $u\simeq v$ to say that $u\lesssim v$ and $v\lesssim u$. Moreover, for $\Omega\subset M$ a subset of finite and non-vanishing measure and $f\in L^1_{\loc}(M,\mu)$, $\aver{\Omega} f \, d\mu=\frac{1}{|\Omega|} \int f \, d\mu$ denotes the average of $f$ on $\Omega$.  
We denote by $\calM$ the uncentered Hardy-Littlewood maximal operator. For $p \in [1,\infty)$, we abbreviate by $\calM_p$ the operator defined by $\calM_p(f):= [\calM(|f|^p)]^{1/p}$, $f \in L^p_{\loc}(M,\mu)$.

We shall assume  that   $(M,d,\mu)$ satisfies the volume doubling property, that is
  \begin{equation}\label{d}\tag{$V\!D$}
     |B(x,2r)| \lesssim  |B(x,r)|,\quad \forall~x \in M,~r > 0.
    \end{equation}
It follows that there exists  $\nu>0$  such that
     \begin{equation*}\label{dnu}\tag{$V\!D_\nu$}
      |B(x,r)|\lesssim \left(\frac{r}{s}\right)^{\nu} |B(x,s)|,\quad \forall~ x \in M,~r \ge s>0,
    \end{equation*}
which implies
     \begin{equation*}
      |B(x,r)|\lesssim \left(\frac{d(x,y)+r}{s}\right)^{\nu} |B(y,s)|,\quad \forall~ x,y \in M, ~r \ge s>0.
    \end{equation*}
An easy consequence of \eqref{d} is that  balls with a non-empty intersection and comparable radii have comparable measures.\\

We suppose that there exists an unbounded operator $L$ on $L^2(M,\mu)$ satisfying the following assumptions.

\begin{assum-L}
Let $L$ be an injective, $\theta$-accretive operator with dense domain $\mathcal D_2(L)$ in $L^2(M,\mu)$, where $0 \leq \theta < \pi/2$. Suppose that there exist two exponents $1\leq p_0 <2 < q_0\leq \infty$ such that for all balls $B_1,B_2$ of radius $\sqrt{t}$ 
\begin{align} \label{eq:OD-semigroup}
  \| e^{-tL} \|_{L^{p_0}(B_1) \to L^{q_0}(B_2)} 
 \lesssim  |B_1|^{-1/p_0} |B_2|^{1/q_0} e^{-c\frac{d(B_1,B_2)^2}{t}}.
\end{align}
\end{assum-L}

As a consequence, $L$ is a maximal accretive operator on $L^2(M,\mu)$, and therefore has a bounded $H^\infty$ functional calculus on $L^2(M,\mu)$. 
The assumption $\theta<\frac{\pi}{2}$ implies that $-L$ is the generator of an analytic semigroup $(e^{-tL})_{t>0}$ in $L^2(M,\mu)$ (see \cite{ADM,Kato} for definitions and further considerations).
The last part in the assumption means that the considered semigroup satisfies $L^{p_0}$-$L^{q_0}$ off-diagonal estimates in extension to $L^2$-$L^2$ Davies-Gaffney estimates. In situations where pointwise heat kernel bounds fail (see below for examples), this has turned out to be an appropriate replacement. \\

In this work, we study weighted estimates for non-integral singular operators satisfying some cancellation with respect to this operator. We consider a linear (or sublinear) operator $T$ satisfying the following properties:

\begin{assum} \label{assum-T}
\begin{itemize}
\item[$(a)$] $T$ is well-defined as a bounded operator in $L^2$;
\item[$(b)$] 
($L^{p_0}$-$L^{q_0}$ off-diagonal estimates) 
There exists 
$N_0 \in \N$ such that for all integers $N\geq N_0$ and all balls $B_1,B_2$ of radius $\sqrt{t}$ 
\begin{align*}
   \| T (tL)^Ne^{-tL}\|_{L^{p_0}(B_1) \to L^{q_0}(B_2)} \lesssim  |B_1|^{-1/p_0} |B_2|^{1/q_0} \left(1+\frac{d(B_1,B_2)^2}{t}\right)^{-\frac{\nu+1}{2}}.
 \end{align*}
\item[$(c)$]
There exists an exponent $p_1 \in [p_0,2)$ such that for all $x \in M$ and $r>0$
\begin{align*}
	\left(\aver{B(x,r)} |Te^{-r^2 L} f|^{q_0}\,d\mu \right)^{1/q_0}
	\lesssim \inf_{y \in B(x,r)} \calM_{p_1}(Tf)(y) + \inf_{y \in B(x,r)} \calM_{p_1}(f)(y).
\end{align*}
\end{itemize}
\end{assum}

Item $(b)$ encodes the fact that the operator $T$ has some cancellation property which interacts well with the cancellation of the considered semigroup. 
Item $(c)$ is a property which allows us to get off-diagonal estimates for the low-frequency part of the operator $T$. 
We point out that Items $(b)$ and $(c)$ are the main assumptions and were already used in numerous works to replace the notion of Calder\'on-Zygmund operators (see e.g. \cite{memoirs,ACDH} and references therein).

We will assume the above throughout the paper. We abbreviate the setting with $(M,\mu,L,T)$.

\subsection{Results}

Consider the setting $(M,\mu,L,T)$ satisfying the previous assumptions. Then we claim that such an operator satisfies weighted boundedness. Indeed, such operator satisfies the three following properties:
\begin{itemize}
\item $T$ is bounded on $L^2$;
\item for every $r>0$ and some integer $N$ large enough, $T(I-e^{-r^2L})^N$ satisfies $L^{p_0}$-$L^{q_0}$ off-diagonal estimates (outside the diagonal), see Corollary \ref{cor} for a precise statement;
\item $T$ satisfies the Cotlar type inequality Assumption \ref{assum-T} (c) for some $p_1<2$.
\end{itemize}
We then know from \cite[Theorems 1.1 and 1.2]{memoirs} (see also the earlier results in \cite{AM,BK3,ACDH})  that $T$ is bounded in $L^p$ for every $p\in(p_0,q_0)$. By \cite[Theorem 3.13]{AM}, $T$ also satisfies some weighted estimates: for every $p\in(p_0,q_0)$ and every weight $\omega \in A_{\frac{p}{p_0}} \cap RH_{\left(\frac{q_0}{p}\right)'}$ (see Section \ref{sec:weights} for a precise definition of this class of weights), the operator $T$ is bounded in $L^p_\omega$.  However, it is not clear from these previous results, how the quantity $ \|T\|_{L^p_\omega \to L^p_\omega}$ depends on the weight $\omega$. The methods used do not tend to give optimal estimates.

Our main result is the following:

\begin{theorem} \label{thm} Consider the setting $(M,\mu,L,T)$ as above.
For $p\in (p_0,q_0)$, there exists a constant $c_p$ such that for every weight $\omega \in A_{\frac{p}{p_0}} \cap RH_{\left(\frac{q_0}{p}\right)'}$
$$ \| T \|_{L^p_\omega \to L^p_\omega} \leq c_p  \left([\omega]_{A_{\frac{p}{p_0}}} [\omega]_{ RH_{\left(\frac{q_0}{p}\right)'}}\right)^\alpha,$$
with 
\begin{equation} \label{def:alpha}
 \alpha:=\max\left\{\frac{1}{p-p_0}, \frac{q_0-1}{q_0-p}\right\}.
 \end{equation}
In particular, by defining the specific exponent
$$ \mathfrak{p}:=1+\frac{p_0}{q_0'} \in (p_0,q_0),$$
we have $\alpha=\frac{1}{p-p_0}$ if $p \in(p_0, \mathfrak{p}]$, and $\alpha=\frac{q_0-1}{q_0-p}$ if $p \in [\mathfrak{p},q_0)$.
\end{theorem}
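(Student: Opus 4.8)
\textbf{Plan of proof.}
The strategy is to adapt Lacey's bilinear sparse domination to the non-kernel setting: I would first dominate the bilinear form $\langle Tf,g\rangle$ by a sparse form built from $L^{p_0}$-averages of $f$ and $L^{q_0'}$-averages of $g$, and then prove a sharp weighted estimate for that sparse form, from which the theorem follows by the duality $(L^p_\omega)^*=L^{p'}_{\omega^{1-p'}}$. Concretely, after fixing one of the finitely many dyadic-type systems of cubes on $(M,d,\mu)$ and reducing, by standard localisation and a limiting argument, to $f,g$ bounded with compact support, the target of the first stage is the following: there is a sparse family $\mathcal S$ with
$$\big|\langle Tf,g\rangle\big|\ \lesssim\ \sum_{Q\in\mathcal S}|Q|\,\Big(\aver{3Q}|f|^{p_0}\,d\mu\Big)^{1/p_0}\Big(\aver{3Q}|g|^{q_0'}\,d\mu\Big)^{1/q_0'}.$$

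For the sparse domination I would split $T$, at the scale $\ell(Q)$ of a stopping cube $Q$, as $T=T e^{-\ell(Q)^2L}+T\big(I-e^{-\ell(Q)^2L}\big)^{N}$ for $N$ large. The high-frequency remainder $T(I-e^{-\ell(Q)^2L})^{N}$ obeys $L^{p_0}$--$L^{q_0}$ off-diagonal estimates with tails decaying like $\big(1+d(\cdot,Q)/\ell(Q)\big)^{-(\nu+1)}$ by Corollary~\ref{cor} (a consequence of Assumption~\ref{assum-T}(b)), so its contribution at the scale of $Q$ is $\lesssim|Q|\,\big(\aver{3Q}|f|^{p_0}\big)^{1/p_0}\big(\aver{3Q}|g|^{q_0'}\big)^{1/q_0'}$ plus a summable error; the low-frequency part $Te^{-\ell(Q)^2L}f$ is controlled on $Q$, via the Cotlar-type inequality Assumption~\ref{assum-T}(c), by $\calM_{p_1}(Tf)+\calM_{p_1}(f)$ with $p_1<2$. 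The stopping children of $Q$ are then its maximal dyadic subcubes on which a localised version of $\calM_{p_1}(Tf)+\calM_{p_1}(f)$, together with the maximal high-frequency truncation of $T$, exceeds a large fixed multiple of its $3Q$-average; the weak-type $(p_1,p_1)$ bound for $\calM_{p_1}$, the $L^2$-boundedness of $T$, and the off-diagonal decay guarantee that the children cover at most half of $Q$, which gives both the iteration and the sparseness condition $|E_Q|\ge\tfrac12|Q|$, $E_Q:=Q\setminus\bigcup(\text{children})$. Collecting the coronal contributions yields the displayed bound. I expect this stage to be the main obstacle: the delicate point is to organise the stopping time so that the term $\calM_{p_1}(Tf)$ appearing on the right of Assumption~\ref{assum-T}(c) gets reabsorbed (a Cotlar-type self-improvement), while keeping the off-diagonal tails summable across scales; note also that $p_1$ enters only through this intermediate maximal operator, the final $f$-averaging exponent being $p_0$, matching Assumption~\ref{assum-T}(b).

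The second stage is the purely weighted estimate: for every sparse $\mathcal S$ and $\sigma=\omega^{1-p'}$,
$$\sum_{Q\in\mathcal S}|Q|\,\Big(\aver{Q}|f|^{p_0}\Big)^{1/p_0}\Big(\aver{Q}|g|^{q_0'}\Big)^{1/q_0'}\ \lesssim\ \big([\omega]_{A_{p/p_0}}[\omega]_{RH_{(q_0/p)'}}\big)^{\alpha}\,\|f\|_{L^p_\omega}\|g\|_{L^{p'}_{\sigma}}.$$
Here I would replace $|Q|$ by the comparable $|E_Q|$ using sparseness, split each average by H\"older's inequality so as to insert $\omega$ on the $f$-side and $\sigma$ on the $g$-side, and reduce to a Carleson embedding governed by the $A_{p/p_0}$ and $RH_{(q_0/p)'}$ characteristics of $\omega$ (equivalently, by the $A_\infty$-type characteristics of $\omega$ and of $\sigma$ relative to these exponents). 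Running this embedding via a stopping-time argument on $\mathcal S$ adapted to the measures $\omega\,d\mu$ and $\sigma\,d\mu$ produces two competing powers of the characteristic, one coming from the $f$-side (which yields $\tfrac1{p-p_0}$) and one from the $g$-side (which yields $\tfrac{q_0-1}{q_0-p}$); taking the larger gives $\alpha$ as in \eqref{def:alpha}, the two powers coinciding precisely at $p=\mathfrak p=1+p_0/q_0'$. Equivalently, one may first rewrite the weight condition as $\omega^{(q_0/p)'}\in A_{(q_0/p)'(p/p_0-1)+1}$ and appeal to the known sharp bound for the classical $p_0$-sparse operator, tracking how the characteristic transforms; and since the dual exponents satisfy $\mathfrak p'=1+q_0'/p_0$, which is the critical exponent of the dual problem with data $(q_0',p_0')$, the ranges $p\le\mathfrak p$ and $p\ge\mathfrak p$ are exchanged under duality, so it suffices to treat, say, $p\in(p_0,\mathfrak p]$ and obtain $\alpha=\frac1{p-p_0}$. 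Checking that the H\"older splittings are lossless and that the stopping-time sum telescopes with exactly the claimed power is the only genuinely computational point of this stage.
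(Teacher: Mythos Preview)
Your two-stage plan (sparse domination of $\langle Tf,g\rangle$, then a sharp weighted bound for the sparse form) is exactly the paper's architecture, but both stages differ in execution and one formula needs correcting.

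For Stage~1, the identity $T=Te^{-\ell(Q)^2L}+T(I-e^{-\ell(Q)^2L})^N$ is false for $N>1$. The paper instead works through the Calder\'on reproducing formula $I=\int_0^\infty Q_t^{(N)}\,\frac{dt}{t}$ and the approximations $P_t^{(N)}=\int_1^\infty Q_{st}^{(N)}\,\frac{ds}{s}$, splitting the $t$-integral at $\ell(Q)^2$. The controlling maximal operator is then the \emph{frequency-truncated} object $T^{\#}f(x)=\sup_{B\ni x}\big(\aver{B}|TP_{r(B)^2}^{(N)}f|^{q_0}\big)^{1/q_0}$ rather than a spatially truncated one \`a la Lacey; its weak-type $(p_0,p_0)$ is proved separately (Cotlar inequality from Assumption~\ref{assum-T}(c) gives $L^2$, then Blunck--Kunstmann/Auscher extrapolation gives the weak type), and the stopping set in the recursion is $\{\max(T^{\#}_{Q_0}f,\,\calM^*_{Q_0,p_0}f)>\eta(\aver{5Q_0}|f|^{p_0})^{1/p_0}\}$. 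This organisation is what makes the reabsorption of $\calM_{p_1}(Tf)$ painless---you never see that term in the stopping, only $T^{\#}$.

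For Stage~2, the paper does not run a Carleson embedding or a further stopping time on $\mathcal S$. It sets $v=\omega^{(q_0/p)'}$, $u=v^{1-r'}$ with $r=(q_0/p)'(p/p_0-1)+1$, rewrites each $p_0$-average (resp.\ $q_0'$-average) relative to $u\,d\mu$ (resp.\ $v\,d\mu$), and extracts $[\omega]^\alpha$ in one shot from the $A_r$ bound $(\aver{Q}v)(\aver{Q}u)^{r-1}\lesssim[\omega]^{(q_0/p)'}$. The remaining factor has a nonpositive exponent on either $u$ or $v$ (this dichotomy is exactly $p\gtrless\mathfrak p$), hence can be replaced by the $E_P$-average and absorbed via H\"older on $E_P$; one then concludes by disjointness of the $E_P$ and the $L^{p/p_0}(u\,d\mu)$, $L^{p'/q_0'}(v\,d\mu)$ boundedness of the dyadic maximal functions $\calM_u$, $\calM_v$ with weight-independent constants. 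Your Carleson/stopping-time route and the duality reduction to $p\le\mathfrak p$ would also work, but the paper's argument is more direct and treats both ranges of $p$ symmetrically without invoking duality.
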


\begin{rem}
In the case $q_0=p_0'$, we have $\mathfrak{p}=2$ and obtain a sharp $A_2$ inequality with an exponent
$$\alpha=\frac{p_0}{2-p_0}.$$
\end{rem}

\begin{rem}
If $p_0=1$ and $q_0=\infty$, we obtain $\alpha=\max(1,1/(p-1))$ and so we reprove the $A_2$ conjecture for such operators. Note that we are then able to prove these sharp estimate in the case of the Riesz transform $T=\nabla L^{-1/2}$in the situation where this operator does not fit the Calder\'on-Zygmund framework (there is no pointwise regularity estimate of the full kernel), see Section \ref{sec:example}.
\end{rem}

\begin{rem} We also prove the optimality of such estimates (in terms of the growth with respect to the characteristic of the weight) for sparse operators, which are shown to control our operators. The optimality also still holds for the operator itself, if we know some `lower off-diagonal' estimates.
\end{rem}

\begin{rem} \label{rem:CZO} On the Euclidean space ${\mathbb R}^\nu$, consider the canonical heat semigroup and $T$ an `arbitrary' Calder\'on-Zygmund operator: a linear $L^2$-bounded operator with a kernel $K$ satisfying some regularity estimates
$$ \left|K(x,y)-K(z,y)\right|+\left|K(y,x)-K(z,x)\right| \lesssim \left(\frac{d(y,z)}{d(y,z)+d(x,y)}\right)^\varepsilon d(x,y)^{-\nu},$$
for some $\varepsilon>0$ and every points $x,y,z$ with $2d(y,z)\leq d(x,z)$.
Then it is well-known that $T$ is $L^{p}$-bounded for every $p\in(1,\infty)$. Consequently, we can check that Assumption \ref{assum-T} is satisfied for $p_0=1$ and any $q_0< \infty$ as large as we want. So unfortunately, it is unclear to us if our approach could recover the optimal $A_p$ estimates for arbitrary Calder\'on-Zygmund operators (which would correspond to $q_0=\infty$). It appears that Assumption \ref{assum-T} (b) describes an extra property on the operator $T$, a kind of suitable frequency decomposition / representation (as Fourier multipliers or paraproducts, ...). It is interesting to observe that under this extra property, we are going to detail an `elementary' proof of the sharp weighted estimates (simpler than all the existing proofs \cite{Lerner3,Lacey}, ...) which has also the very important property to support the extension to non-integral operators with no regularity property on the kernel.

We remark that this extra property already appeared in \cite[Theorem 3]{DM} where boundedness of the maximal operator $T^{\#}$ (see Section \ref{sec:maxop} for the definition) in the case  $q_0=\infty$ was shown, and that this is also the only place where we are using it. See also \cite[Remark p. 251]{DM}.  Moreover, as illustrated in Section \ref{sec:example}, this extra property is satisfied for the main prototype of Calder\'on-Zygmund operators.
\end{rem}

\section{Notations and Preliminaries on approximation operators}

\subsection{Notations}

For $p\in[1,\infty)$, a subset $E\subset M$ and $\lambda$ a measure on $E$, we write $L^p(E,d\lambda)$ for the Lebesgue space, equipped with the norm
$$ \|f\|_{L^p(E,d\lambda)} = \left(\int_E |f|^p d\lambda \right)^{1/p}.$$

For convenience, we forget $E$ if $E=M$ is the whole space and $\lambda$ if $\lambda=\mu$ is the underlying measure. So $L^p$ stands for $L^p(M,\mu)$.
For a positive function $\omega$, $L^p_\omega$ denotes the weighted Lebesgue space, equipped with the norm
$$ \|f\|_{L^p_\omega} = \left(\int_M |f|^p \omega d\mu \right)^{1/p}.$$

For a positive function $\rho:M \rightarrow (0,\infty)$, we identify the function $\rho$ with the measure $\rho d\mu$ in the sense that for every measurable subset $E\subset M$, we use
$$ \rho(E) = \int_{E} \rho d\mu.$$

For a ball $B$, we denote $S_0(B)=2B$ and $S_j(B)=2^{j+1}B \setminus 2^j B$, $j \geq 1$.
By extending the average notion to coronas, we denote 
$$
	\aver{S_j(B)} f\,d\mu = |2^jB|^{-1} \int_{S_j(B)} f \,d\mu.
$$

\subsection{Operator estimates}

The building blocks of our analysis will be the following operators derived from the semigroup $(e^{-tL})_{t>0}$. They serve as a replacement for Littlewood-Paley operators. 

Two different classes of elementary operators will be needed: one $(P_t)_{t>0}$ corresponding to an approximation of the identity at scale $\sqrt{t}$ commuting with the heat semigroup and $(Q_t)_{t>0}$ which satisfies some extra cancellation with respect to $L$.

\begin{definition} \label{def:Qt-Pt}
Let $N > 0$, and set $c_N=\int_0^{+\infty} s^{N} e^{-s} \,\frac{ds}{s}$.
 For $t>0$, 
define
\begin{equation} \label{def:Qt}
	Q_t^{(N)}:=c_N^{-1}(tL)^{N} e^{-tL}
\end{equation}
and 
\begin{equation} \label{def:Pt}
	P_t^{(N)}:=  \int_{1}^\infty Q^{(N)}_{st} \, \frac{ds}{s} = \phi_N(tL),
\end{equation}
with $\phi_N(x):= c_N^{-1}\int_x^{+\infty} s^{N} e^{-s} \,\frac{ds}{s}$,  $x\ge 0$.
\end{definition}

\begin{rem} \label{rem:Pt}
Let $p \in [p_0,q_0]$ with $p<\infty$ and $N>0$.
\begin{enumerate}
\item[(i)] 
Note that $P_t^{(1)}=e^{-tL}$ and $Q_t^{(1)}=tLe^{-tL}$. The two families of operators $(P_t^{(N)})_{t>0}$ and $(Q_t^{(N)})_{t>0}$ are related by $$ t\partial_t P_t^{(N)} =  tL\phi'_N(tL)= - Q_t^{(N)}. $$
\item[(ii)]
If $N$ is an integer, then $Q_t^{(N)}=(-1)^{N}c_N^{-1} t^{N} \partial_t^{N} e^{-tL}$, and $P_t^{(N)}=p(tL)e^{-tL}$, $p$ being a polynomial of degree $N-1$ with 
$p(0)=1$.
\item[(iii)] By $L^p$ analyticity of the semigroup and \eqref{eq:OD-semigroup}, we know that for every integer $N> 0$ and every $t>0$, $P_{t}^{(N)}$ and $Q_{t}^{(N)}$ satisfy off-diagonal estimates at the scale $\sqrt{t}$. See the arguments in e.g. \cite[Proposition 3.1]{HLMMY}.
\item[(iv)]
The operators $P_t^{(N)}$ and $Q_t^{(N)}$ are bounded in $L^p$, uniformly in $t>0$. See \cite[Theorem 2.3]{AM2}, taking into account (iii).
\end{enumerate}
\end{rem}

\begin{proposition}[Calder\'on reproducing formula] \label{prop:calderon} Let $N>0$ and $p\in(p_0,q_0)$. 
For every $f\in L^p $,
\begin{align} \label{limzero}
\lim_{t\to {0^+}} P_t^{(N)}f  =  f \quad & \textrm{in $L^p$},\\ \label{liminfty}
\lim_{t\to {+\infty}} P_t^{(N)}f  =  0 \quad & \textrm{in $L^p$},
\end{align}
and
\begin{equation} \label{calde}
 f = \int_0^{+\infty} Q_t^{(N)}f \, \frac{dt}{t} \quad \text{in}\ L^p.
\end{equation}
In particular, it follows that, as $L^p$-bounded operators we have the decomposition
\begin{equation}\label{along}
P_t^{(N)} = \textrm{Id}+ \int_0^t Q_s^{(N)} \, \frac{ds}{s}.
\end{equation}
\end{proposition}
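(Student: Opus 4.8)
The plan is to prove the three limits first on $L^2$, where they reduce entirely to the bounded $H^\infty$ functional calculus of $L$, and then to transfer them to $L^p$ by combining the $L^2$ statement with the uniform $L^r$-boundedness of the approximation operators from Remark~\ref{rem:Pt}(iv), through an elementary interpolation and density argument; the decomposition \eqref{along} will then be a formal consequence of \eqref{calde}. On $L^2$, note first that $c_N=\Gamma(N)$, so $\phi_N(0)=1$, while $\phi_N$ extends to a bounded holomorphic function on each sector $\Sigma_\mu$ with $\theta<\mu<\tfrac{\pi}{2}$, and $\phi_N(w)\to0$ as $|w|\to\infty$ in $\Sigma_\mu$. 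Since $L$ is injective and has a bounded $H^\infty$ calculus on $L^2$, one has $L^2(M,\mu)=\overline{\mathrm{Ran}\,L}$. The families $(\phi_N(t\,\cdot))_t$ are uniformly bounded in $H^\infty(\Sigma_\mu)$ and converge, locally uniformly on $\Sigma_\mu$, to the constant $1$ as $t\to0$ and to $0$ as $t\to\infty$; by the convergence lemma of the $H^\infty$ calculus (see \cite{ADM}), together with $\phi_N(0)=1$ for the first limit and $\overline{\mathrm{Ran}\,L}=L^2$ for the second, this gives $P_t^{(N)}\to\textrm{Id}$ as $t\to0$ and $P_t^{(N)}\to0$ as $t\to\infty$, strongly on $L^2$. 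For the reproducing formula, the substitution $w=tz$ yields the scalar identity $\int_\epsilon^R c_N^{-1}(tz)^N e^{-tz}\,\tfrac{dt}{t}=\phi_N(\epsilon z)-\phi_N(Rz)$, hence $\int_\epsilon^R Q_t^{(N)}\,\tfrac{dt}{t}=P_\epsilon^{(N)}-P_R^{(N)}$ as operators on $L^2$ (the integral converging in operator norm, since $\|Q_t^{(N)}\|_{L^2\to L^2}$ is bounded in $t$), and letting $\epsilon\to0$, $R\to\infty$ proves \eqref{calde} in $L^2$.

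\textbf{Transfer to $L^p$.} Fix $p\in(p_0,q_0)$, the case $p=2$ being already covered. Choose $r\in(p_0,q_0)$ lying strictly between $p$ and the endpoint of $(p_0,q_0)$ on the same side of $2$ as $p$ (possible precisely because $(p_0,q_0)$ straddles $2$); this makes $p$ a genuine interpolation exponent between $2$ and $r$, so $\|g\|_{L^p}\le\|g\|_{L^2}^{1-\vartheta}\|g\|_{L^r}^{\vartheta}$ for some $\vartheta\in(0,1)$ and all $g\in L^2\cap L^r$. By Remark~\ref{rem:Pt}(iv), $\sup_t\|P_t^{(N)}\|_{L^2\to L^2}$, $\sup_t\|P_t^{(N)}\|_{L^r\to L^r}$ and $\sup_t\|P_t^{(N)}\|_{L^p\to L^p}$ are finite. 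For $f$ bounded with compact support one has $f\in L^2\cap L^r$, and hence $P_t^{(N)}f-f\in L^2\cap L^r$, so
\[
\|P_t^{(N)}f-f\|_{L^p}\ \le\ \|P_t^{(N)}f-f\|_{L^2}^{1-\vartheta}\,\big(C\|f\|_{L^r}\big)^{\vartheta},
\]
which by the $L^2$ statement tends to $0$ as $t\to0$; similarly $\|P_t^{(N)}f\|_{L^p}\to0$ as $t\to\infty$. Since such $f$ are dense in $L^p$ and $\sup_t\|P_t^{(N)}\|_{L^p\to L^p}<\infty$, a standard three-$\epsilon$ argument extends \eqref{limzero} and \eqref{liminfty} to all $f\in L^p$.

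\textbf{The reproducing formula on $L^p$ and \eqref{along}.} For fixed $0<\epsilon<R$ the operator $\int_\epsilon^R Q_t^{(N)}\,\tfrac{dt}{t}$ is bounded on $L^p$, with norm $\lesssim\log(R/\epsilon)$ by Remark~\ref{rem:Pt}(iv), and it coincides with $P_\epsilon^{(N)}-P_R^{(N)}$ on the dense subspace $L^2\cap L^p$ by the $L^2$ identity above, hence on all of $L^p$. Letting $\epsilon\to0$ and $R\to\infty$ and invoking \eqref{limzero}--\eqref{liminfty} gives $f=\int_0^{+\infty}Q_t^{(N)}f\,\tfrac{dt}{t}$ in $L^p$, i.e.\ \eqref{calde}. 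Finally \eqref{along} follows on writing $\int_0^\infty=\int_0^t+\int_t^\infty$ and noting $\int_t^\infty Q_s^{(N)}\,\tfrac{ds}{s}=\int_1^\infty Q_{\sigma t}^{(N)}\,\tfrac{d\sigma}{\sigma}=P_t^{(N)}$ (change of variable $\sigma=s/t$ in Definition~\ref{def:Qt-Pt}); equivalently one integrates the relation $t\partial_t P_t^{(N)}=-Q_t^{(N)}$ of Remark~\ref{rem:Pt}(i) from $0$ to $t$ and uses \eqref{limzero}.

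\textbf{Main difficulty.} None of the steps is deep. The points that need care are, first, the $L^2$ convergences for non-integer $N$, which rely genuinely on the $H^\infty$ calculus and, for $t\to\infty$, on the injectivity of $L$ (this is what forces $\overline{\mathrm{Ran}\,L}=L^2$, hence $P_t^{(N)}\to0$ rather than to a nontrivial projection); and second, the observation that the auxiliary interpolation exponent $r$ can always be placed inside $(p_0,q_0)$, which hinges exactly on $(p_0,q_0)$ containing $2$, so that whichever side of $2$ the given $p$ lies on there is room for $r$ between $p$ and the corresponding endpoint. One should also dispatch the routine consistency checks — that the $L^2$- and $L^p$-realizations of $P_t^{(N)}$ and $Q_t^{(N)}$ agree on $L^2\cap L^p$, so that the identities above may be read in either space — which are immediate from Definition~\ref{def:Qt-Pt} and the $L^p$-boundedness of the semigroup.
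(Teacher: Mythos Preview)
The paper states this proposition without proof, so there is no argument to compare against directly. Your proof is correct and is the standard one: the $L^2$ case follows from the convergence lemma for the bounded $H^\infty$ functional calculus (using injectivity of $L$ to ensure $\overline{\textrm{Ran}\,L}=L^2$, hence $P_t^{(N)}\to 0$ rather than to a nontrivial projection), and the transfer to $L^p$ goes through interpolation with an auxiliary exponent $r\in(p_0,q_0)$ together with the uniform $L^p$-boundedness recorded in Remark~\ref{rem:Pt}(iv) and density of bounded compactly supported functions.

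One small point worth flagging: your derivation of \eqref{along} actually yields
\[
P_t^{(N)}=\textrm{Id}-\int_0^t Q_s^{(N)}\,\frac{ds}{s},
\]
with a minus sign, not the plus sign appearing in the stated formula. This is a typo in the paper's statement; the correct sign is the one your argument produces, and it is the version the paper itself uses later (in the proof of Lemma~\ref{lemma}, where one reads $I-P_{r^2}^{(N)}=\int_0^{r^2}Q_s^{(N)}\,\frac{ds}{s}$). You should note the discrepancy explicitly rather than claim to have derived \eqref{along} as written.
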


\section{Examples and applications}
\label{sec:example}

Our assumptions on $L$ hold for a large variety of second order operators, for example uniformly elliptic operators in divergence form and Schr\"odinger operators with singular potentials on $\R^n$, or the Laplace-Beltrami operator on a Riemannian manifold. For more precise examples of $L$ and references, see Subsection \ref{subsec:Riesz} below. We give some examples of singular integral operators $T$ that fit into our setting. See also \cite{AM3}.

\subsection{Holomorphic functional calculus of $L$}
Let $0 \leq \theta< \sigma<\pi$, where $\theta$ denotes the angle of accretivity of $L$. 
Define the open sector in the complex plane of angle $\sigma$ by
\begin{align*}
	S_{\sigma}^o := \{z \in \C \,:\, z \neq 0, \ |\arg z| <\sigma \}.
\end{align*}
Denote by $H(S_\sigma^o)$ the space of all holomorphic functions on $S_\sigma^o$, and let
$$
	H^\infty(S_\sigma^o):=\{ \varphi \in H(S_\sigma^o):\,\|\varphi\|_\infty <\infty\}.
$$
By our assumptions, $L$ has a bounded $H^\infty$ functional calculus on $L^2$. It was shown in \cite{BK3} that under the assumption \eqref{eq:OD-semigroup}, the functional calculus can be extended to $L^p$ for $p \in (p_0,q_0)$.

We now obtain the following weighted version. Let $\sigma>\theta$, and let $\varphi \in H^\infty(S_\sigma^o)$. Set $T=\varphi(L)$. We check  Assumption \ref{assum-T}. Item (a) is a restatement of the fact that $L$ has a bounded $H^\infty$ functional calculus on $L^2$. Since $T$ commutes with $e^{-r^2L}$, Item (c) can be obtained as a consequence of \eqref{eq:OD-semigroup} (we do not detail this here, similar estimates are done in the sequel). Finally for large enough $N$, by adapting \cite[Lemma 3.6]{AMcR} one can show that $\varphi(L)(tL)^Ne^{-tL}$ satisfies $L^{q_0}$-$L^{q_0}$ off-diagonal estimates. Combining this with $L^{p_0}$-$L^{q_0}$ off-diagonal estimates for $e^{-tL}$ gives (b).
We therefore have

\begin{theorem}
Let $p \in (p_0,q_0)$, and $\omega \in A_{\frac{p}{p_0}} \cap RH_{\left(\frac{q_0}{p}\right)'}$.
The operator $L$ has a bounded holomorphic functional calculus in $L^p_\omega$ with, for every $\sigma>\theta$, 
$$ \| \varphi(L) \|_{L^p_\omega \to L^p_\omega} \leq c_{p,\sigma}  \left([\omega]_{A_{\frac{p}{p_0}}} [\omega]_{ RH_{\left(\frac{q_0}{p}\right)'}}\right)^\alpha \|f\|_\infty,$$
for all $\varphi \in H^\infty(S_\sigma^o)$, and 
$\alpha$ as defined in \eqref{def:alpha}. 
\end{theorem}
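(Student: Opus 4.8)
The strategy is to verify that $T=\varphi(L)$ satisfies all three items of Assumption \ref{assum-T} and then simply invoke Theorem \ref{thm}. The conclusion of Theorem \ref{thm} gives exactly the asserted bound, with $\alpha$ as in \eqref{def:alpha} and a constant $c_{p,\sigma}$ which absorbs the implicit constants in the off-diagonal estimates (these depend on $\sigma$ through the functional calculus). The only genuine work is to check Assumption \ref{assum-T} $(a)$--$(c)$, and the paragraph preceding the statement already sketches this; my plan is to carry out that sketch carefully.

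First, Item $(a)$: by the Assumptions on $L$, the operator $L$ is maximal accretive and hence has a bounded $H^\infty$ functional calculus on $L^2(M,\mu)$, so $\varphi(L)$ is $L^2$-bounded with norm $\lesssim \|\varphi\|_\infty$. Next, Item $(b)$: I would write $\varphi(L)(tL)^N e^{-tL} = \bigl[\varphi(L)(tL)^{N_1}e^{-tL/2}\bigr]\bigl[(tL)^{N_2}e^{-tL/2}\bigr]$ with $N_1+N_2=N$, $N_1,N_2$ large. For the first factor one shows $L^{q_0}$-$L^{q_0}$ off-diagonal estimates: the holomorphic function $z\mapsto \varphi(z)(tz)^{N_1}e^{-tz/2}$ decays exponentially at infinity and polynomially at $0$, so a contour-integral representation together with the $L^{q_0}$-boundedness of the resolvents (which follows from \eqref{eq:OD-semigroup} by the results of \cite{BK3}) and the off-diagonal decay of those resolvents yields the claim; this is the adaptation of \cite[Lemma 3.6]{AMcR} alluded to. The second factor $(tL)^{N_2}e^{-tL/2}$ satisfies $L^{p_0}$-$L^{q_0}$ off-diagonal estimates by Remark \ref{rem:Pt}(iii) (it is a constant multiple of $Q^{(N_2)}_{t/2}$). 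Composing the two, using that the composition of $L^{q_0}$-$L^{q_0}$ and $L^{p_0}$-$L^{q_0}$ off-diagonal estimates gives $L^{p_0}$-$L^{q_0}$ off-diagonal estimates (with the polynomial decay rate $(\nu+1)/2$ coming from the slower of the two, which one can always arrange by taking $N$ large), one obtains $(b)$.

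For Item $(c)$, the key point is that $T=\varphi(L)$ commutes with $e^{-r^2L}$. Then $Te^{-r^2L}f = e^{-r^2L}(Tf)$, and one estimates the local $L^{q_0}$ average of $e^{-r^2 L}(Tf)$ on $B(x,r)$ by decomposing $Tf = \sum_{j\ge 0} (Tf)\Eins_{S_j(B(x,r))}$ and applying \eqref{eq:OD-semigroup} corona by corona, together with volume doubling \eqref{dnu} to sum the geometric-type series; this produces a bound of the form $\inf_{y\in B(x,r)}\calM_{p_0}(Tf)(y)$, hence $\inf_{y\in B(x,r)}\calM_{p_1}(Tf)(y)$ for any $p_1\in[p_0,2)$. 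One actually gets $(c)$ with no $\calM_{p_1}(f)$ term at all, which is stronger than required. I would only sketch this, since, as the text says, ``similar estimates are done in the sequel.''

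The main obstacle is the off-diagonal estimate for $\varphi(L)(tL)^{N_1}e^{-tL/2}$ in Item $(b)$: one must control a function of $L$ that does \emph{not} decay at infinity (only $\|\varphi\|_\infty$ is available there) once the $e^{-tL/2}$ factor is split off, so the exponential cutoff must be kept with $\varphi$, and the Cauchy-integral/resolvent argument must be set up so that the contour picks up enough decay from $(tz)^{N_1}e^{-tz/2}$ to beat the volume growth $|B|^{\pm}$ and produce genuine (in fact, arbitrarily fast polynomial) off-diagonal decay. This is exactly the content of the cited \cite[Lemma 3.6]{AMcR}, and invoking it — after noting its hypotheses are met here because $\varphi\in H^\infty(S_\sigma^o)$ with $\sigma>\theta$ — is the cleanest route.
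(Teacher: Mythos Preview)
Your proposal is correct and follows essentially the same approach as the paper: verify Assumption~\ref{assum-T} for $T=\varphi(L)$ and apply Theorem~\ref{thm}, with $(a)$ from the $H^\infty$ calculus on $L^2$, $(b)$ by combining $L^{q_0}$--$L^{q_0}$ off-diagonal estimates for $\varphi(L)(tL)^{N}e^{-tL}$ (via the adaptation of \cite[Lemma~3.6]{AMcR}) with the $L^{p_0}$--$L^{q_0}$ off-diagonal estimates of the semigroup, and $(c)$ from the commutation $Te^{-r^2L}=e^{-r^2L}T$ together with \eqref{eq:OD-semigroup}. The only cosmetic difference is that the paper keeps the full factor $(tL)^N$ with $\varphi(L)$ and peels off a pure semigroup piece $e^{-tL}$ for the $L^{p_0}$--$L^{q_0}$ step, whereas you split $N=N_1+N_2$ and use $(tL)^{N_2}e^{-tL/2}$ for that step; both work.
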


\subsection{Riesz transforms}
\label{subsec:Riesz}

The $L^p$ boundedness of Riesz transforms on manifolds has been widely studied in recent years. 
We refer the reader to \cite{BF} for a recent work and references therein for more details about such operators.

Several situations fit into our setting, we can consider specific operators, or specific ambient spaces or both. Let us give some examples, more can be studied, like Riesz transforms on bounded domains, or associated with Schr\"odinger operators.

\begin{itemize}

\item {\bf Dirichlet forms.} 

Let $(M,d,\mu)$ be a complete space of homogeneous type as above. Consider a self-adjoint operator $L$ on $L^2$ and consider $\mathcal{E}$ the quadratic form associated with $L$, that is
$${\mathcal E}(f,g)=\int_M f Lg\,d\mu.$$ 
If ${\mathcal E}$ is a strongly local and regular Dirichlet form (see \cite{FOT, GSC} for precise definitions) with a carr\'e du champ structure, then with $\Gamma$ being equal to this carr\'e du champ operator, assume that the Poincar\'e inequality $(P_2)$ holds, that is,
\begin{equation*} 
 \left( \aver{B} | f - \aver{B} f d\mu |^2 d\mu \right)^{1/2} \lesssim r \left(\aver{B} d\Gamma(f,f)\right)^{1/2},  \label{P2}
\end{equation*}
for every  $f\in {\mathcal D}(\mathcal{E})$ and every ball $B\subset M$ with radius $r$. 

If the heat semigroup $(e^{-tL})_{t>0}$ and its carr\'e du champ $(\sqrt{t} \Gamma e^{-tL})_{t>0}$ satisfy $L^{p_0}$-$L^{q_0}$ off-diagonal estimates, then it can be checked that Assumption \ref{assum-T} is satisfied for the Riesz transform (see \cite{ACDH}) \footnote{It is known that the assumed Poincar\'e inequality $(P_2)$ self-improves into a Poincar\'e inequality $(P_{p_1})$ for some $p_1<2$ (see \cite{KZ}), which allows us to check Item $(c)$ of Assumption \ref{assum-T}.}
$$ {\mathcal R}:= \Gamma L^{-1/2} = c_k \Gamma\left[\int_0^\infty  (tL)^k e^{-tL} \frac{dt}{\sqrt{t}}\right],$$
for some numerical constant $c_k$ and every integer $k\geq 1$.

In particular for $p_0=1$ and $q_0=\infty$, we get the following result.

\begin{theorem} Consider ${\mathcal R}$ the Riesz transform in one of the following situations:
\begin{itemize}
\item Euclidean space or any doubling Riemannian manifold with bounded geometry and nonnegative Ricci curvature (see \cite{LY});
\item In a convex doubling subset of ${\mathbb R}^\nu$ with the Laplace operator associated with Neumann boundary conditions (see \cite{YW}).
\end{itemize}
Then for every $p\in (1,\infty)$ and every weight $\omega \in A_p$ we have
$$ \|{\mathcal R}\|_{L^p_\omega \to L^p_\omega} \lesssim [\omega]_{A_p}^\alpha \qquad \textrm{with} \qquad \alpha=\max\{1,\frac{1}{p-1}\}.$$
\end{theorem}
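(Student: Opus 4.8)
The plan is to deduce this theorem as a direct corollary of the main Theorem \ref{thm}, so the work splits into two parts: first, verifying that the Riesz transform $\mathcal{R} = \Gamma L^{-1/2}$ fits the setting $(M,\mu,L,T)$ with the specific exponents $p_0 = 1$, $q_0 = \infty$ in each of the listed situations; second, specializing the conclusion. For the specialization, note that when $p_0 = 1$ and $q_0 = \infty$ the weight class $A_{p/p_0} \cap RH_{(q_0/p)'}$ becomes $A_p \cap RH_{\infty'} = A_p \cap RH_1 = A_p$ (since $RH_1$ is no constraint), so $[\omega]_{A_{p/p_0}}[\omega]_{RH_{(q_0/p)'}}$ reduces to $[\omega]_{A_p}$ up to an absolute constant; and the exponent $\alpha = \max\{\frac{1}{p-p_0}, \frac{q_0-1}{q_0-p}\}$ degenerates: the first term is $\frac{1}{p-1}$, while the second term $\frac{q_0-1}{q_0-p} \to 1$ as $q_0 \to \infty$, giving $\alpha = \max\{1, \frac{1}{p-1}\}$. (Here $\mathfrak{p} = 1 + \frac{1}{\infty} = 2$ is the crossover, consistent with the stated formula.) This matches the claimed bound exactly.

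The substantive part is checking Assumption \ref{assum-T} for $\mathcal{R}$ with $p_0 = 1$, $q_0 = \infty$. First I would invoke the structural hypotheses of the Dirichlet-form example: in each of the two situations one has a strongly local regular Dirichlet form with carré du champ $\Gamma$, the Poincaré inequality $(P_2)$, and crucially Gaussian (or at least $L^1$-$L^\infty$) heat kernel bounds together with $L^1$-$L^\infty$ off-diagonal bounds for $\sqrt{t}\,\Gamma e^{-tL}$ — these are exactly what is available for manifolds with nonnegative Ricci curvature (via Li-Yau \cite{LY}) and for the Neumann Laplacian on convex domains (via Wang \cite{YW}). Given these, Assumption on $L$ (the $L^{p_0}$-$L^{q_0}$ off-diagonal estimate \eqref{eq:OD-semigroup}) holds with $p_0 = 1$, $q_0 = \infty$. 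For Assumption \ref{assum-T}: item (a), $L^2$-boundedness of $\mathcal{R}$, is standard from the spectral theorem and the carré du champ identity $\|\Gamma L^{-1/2} f\|_2^2 = \langle L^{-1/2} f, L L^{-1/2} f\rangle = \|f\|_2^2$. Item (b) uses the representation $\mathcal{R} = c_k \Gamma \int_0^\infty (tL)^k e^{-tL}\,\frac{dt}{\sqrt{t}}$ together with the off-diagonal bounds for $\Gamma e^{-tL}$ and the analyticity of the semigroup, composing with $(tL)^N e^{-tL}$; this is the type of computation carried out in \cite{ACDH}. Item (c), the Cotlar-type estimate, follows from the self-improvement of $(P_2)$ to $(P_{p_1})$ for some $p_1 < 2$ (citing \cite{KZ}), which is precisely the role flagged in the footnote, combined again with the off-diagonal bounds; this is again essentially \cite{ACDH}.

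The main obstacle — really the only delicate point — is confirming that in the two concrete geometric situations the hypothesis "$(e^{-tL})_{t>0}$ and $(\sqrt{t}\,\Gamma e^{-tL})_{t>0}$ satisfy $L^1$-$L^\infty$ off-diagonal estimates" genuinely holds, since this is stronger than mere $L^2$-$L^2$ Davies-Gaffney bounds and is what forces $q_0 = \infty$ (rather than some finite $q_0$). For the Riemannian case with $\mathrm{Ric} \geq 0$, this is the classical Li-Yau Gaussian upper bound for the heat kernel plus the well-known gradient estimate $|\nabla_x p_t(x,y)| \lesssim t^{-1/2} |B(x,\sqrt{t})|^{-1} e^{-c d(x,y)^2/t}$; for the convex Neumann domain it is the corresponding result of \cite{YW}. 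Once these inputs are in hand, everything else is bookkeeping: plug $p_0 = 1$, $q_0 = \infty$ into Theorem \ref{thm}, simplify the weight characteristic and the exponent $\alpha$ as above, and conclude. I would therefore present the proof as: (1) recall the heat kernel and gradient bounds in each situation, (2) deduce \eqref{eq:OD-semigroup} and Assumption \ref{assum-T} for $\mathcal{R}$ via \cite{ACDH} and \cite{KZ}, (3) apply Theorem \ref{thm} and simplify.
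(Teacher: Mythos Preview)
Your proposal is correct and follows exactly the route the paper takes: the theorem is stated as a direct consequence of Theorem~\ref{thm} once one knows that in each of the two geometric situations the heat semigroup and its gradient satisfy $L^1$--$L^\infty$ off-diagonal estimates (via \cite{LY} and \cite{YW} respectively), so that Assumption~\ref{assum-T} holds with $p_0=1$, $q_0=\infty$ by the arguments of \cite{ACDH} together with the self-improvement of $(P_2)$ from \cite{KZ}. Your specialization of the weight class and of the exponent $\alpha$ is the same computation the paper records in the remarks following Theorem~\ref{thm}.
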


Note that in these situations, we only have Lipschitz regularity of the heat kernel but the full kernel of the Riesz transform does not satisfy any pointwise regularity estimate and so does not fit into the class of Calder\'on-Zygmund operators (as previously studied in \cite{Lerner3, Lacey}).

\item {\bf Second order divergence form operators.}

Consider $(M,d,\mu)$ a doubling Riemannian manifold, equipped with the Riemannian gradient $\nabla$ and its divergence operator $\dive = \nabla^*$. To $A = A(x)$ a complex, bounded, measurable matrix-valued function, defined on $M$ and satisfying the ellipticity (or accretivity) condition $\text{Re}(A(x)) \geq \kappa I >0$ a.e., we may define a second order divergence form operator
$$ L=L_A f :=- \dive (A\nabla f).$$
Then $L$ is sectorial and satisfies the conservation property but may not be self-adjoint. 

Assume that the Poincar\'e inequality $(P_2)$ holds on $(M,d,\mu)$. If the semigroup $(e^{-tL})_{t>0}$ and its gradient $(\sqrt{t} \nabla e^{-tL})_{t>0}$ satisfy $L^{p_0}$-$L^{q_0}$ off-diagonal estimates, then it can be checked that Assumption \ref{assum-T} is satisfied for the Riesz transform
$$ {\mathcal R}:= \nabla L^{-1/2} = c_k \int_0^\infty \nabla (tL)^k e^{-tL} \frac{dt}{\sqrt{t}}.$$

We refer the reader to \cite{memoirs} for a precise study in the Euclidean setting of the exponents $p_0,q_0$ depending on the matrix-valued map $A$. For example, we have $p_0=1$ and $q_0=\infty$ if $\nu=1$.
\end{itemize}

\subsection{Paraproducts associated with $L$}

In all this paragraph we assume that the semigroup satisfies the conservation property, which means that $e^{-tL}1=1$ for every $t>0$, as well as the fact that the semigroup is supposed to have a heat kernel with pointwise Gaussian bounds (which corresponds to $L^1$-$L^\infty$ estimates).

\subsubsection*{Paraproducts with a $BMO$-function}

In recent works \cite{BT1,F}, several paraproducts have been studied in the context of a semigroup. They allow us to have (as well-known in the Euclidean space) a decomposition of the pointwise product with two paraproducts and a resonent term (we also refer the reader to \cite{BBF} for some applications of such paraproducts in the context of paracontrolled calculus for solving singular PDEs). Moreover, $BMO$ spaces adapted to such a framework have also been the aim of numerous work, so it is natural (as done in the Euclidean setting) to study the linear operator given by the paraproduct of a $BMO$-function.

Let us recall some definitions. A $BMO_L$-function is a locally integrable function $f\in L^1_{loc}$ such that
$$ \|f\|_{BMO_L} := \sup_{B} \left(\aver{B} \left| f - e^{-r^2L} f \right|^2 \, d\mu \right)^{1/2},$$
where we take the supremum over all balls $B$ with radius $r>0$. 
Such $BMO$-spaces satisfy `standard' properties, as for example John-Nirenberg inequality, $T(1)$-theorem, ... 
In particular it is known (see \cite{BZ,BM}) that since the semigroup satisfies $L^{1}$-$L^{\infty}$ off-diagonal estimates, the norm in $BMO_L$ can be built through any $L^p$-oscillation for any $p\in(1,\infty)$ and the corresponding norms are equivalent. For some integer $k$, the paraproduct under consideration is
$$ \Pi_g(f) = \int_0^\infty Q_t^{(k)}\left[Q_t^{(k)}g \cdot P_t^{(k)}f\right] \, \frac{dt}{t}.$$
Using square function estimates, then we know that for $g\in BMO_L$, $Q_t^{(k)}g$ is uniformly bounded in $L^\infty$ and so that $\Pi_g$ is $L^2$-bounded. Assumption \ref{assum-T} $(b)$ and $(c)$ are also satisfied with $p_0=1$ and $q_0=\infty$ (see details in the above references) and so we may apply Theorem \ref{thm} to the previous paraproduct, for $g\in BMO_L$.

\subsubsection*{Algebra property for fractional Sobolev spaces}

In a recent work \cite{BCF2}, some paraproducts associated with such a framework involving a heat semigroup have been very useful in order to study the algebra property for fractional Sobolev spaces. We refer to \cite{BCF2} for more details and more references for other paraproducts associated with a semigroup. 
Then up to some constant $c_N$, we have the product decomposition for two functions
$$ fg = \Pi_g(f) + \Pi_g(f)$$
with the paraproduct defined by
$$ \Pi_g(f) = \int_0^\infty Q_t^{(N)} f \cdot P_t^{(N)} g \, \frac{dt}{t}.$$

Fix a function $g\in L^\infty$, then for $\alpha \in(0,1)$ we are looking for $\dot L^p_{\alpha}$-boundedness of $\Pi_g$, which corresponds to  $L^p$ boundedness of $T:=L^{\alpha/2} \Pi_g L^{-\alpha/2}$. In \cite{BCF2}, we gave different situations / criterions under which Assumption \ref{assum-T} is satisfied. Mainly we considered the condition, introduced in \cite{ACDH}, for some $p\in(2,\infty]$
\begin{equation} \label{Gp}
\sup_{t>0} \|\sqrt{t}|\Gamma e^{-tL} |\|_{p\to p} <+\infty \tag{$G_p$},
\end{equation}
where $\Gamma$ is the carr\'e du champ associated with the operator $L$. In this way, we may apply Theorem \ref{thm} to $T$ and obtain sharp algebra property for weighted fractional spaces, sharp with respect to the weight. We obtain the following estimates.

\begin{theorem}  Let $(M,d,\mu, {\mathcal E})$  be a doubling metric  measure   Dirichlet space with a `carr\'e du champ' (see \cite{BCF2} for more details) and assume that the heat semigroup $e^{-tL}$ has a heat kernel with usual pointwise Gaussian estimates. For some $s\in(0,1)$ and $p\in(1,\infty)$, consider the following weighted Leibniz rule: for every weight $\omega$ and all functions $f,g\in \{h\in L^\infty,\ L^{s/2}(h) \in L^p_\omega\}$
\begin{equation} \| L^{s/2} (fg) \|_{L^p_\omega} \lesssim c(\omega) \left(\|L^{s/2} f\|_{L^p_\omega} \|g\|_{\infty} +  \|f\|_{\infty} \|L^{s/2}g\|_{L^p_\omega} \right).\label{A} 
\end{equation}
\begin{itemize}
 \item[(a)] Then \eqref{A} is valid for $p\in(1,2)$ and $s\in(0,1)$ with every weight $\omega \in 
A_{p} \cap RH_{ \left(\frac{2}{p}\right)'}$ and a constant
 $$ c(\omega) = \left([\omega]_{A_p} [\omega]_{ RH_{\left(\frac{2}{p}\right)'}}\right)^\alpha \qquad \textrm{with} \qquad \alpha:=\max\left\{\frac{1}{p-1}, \frac{1}{2-p}\right\}.$$
\item[(b)] Under $(G_q)$ for some $q\in(2,\infty)$ then   \eqref{A} is valid for $p\in(1,q)$ and $s\in(0,1)$ with for $q^-\in(p,q)$ and every weight $\omega \in A_p \cap RH_{ \left(\frac{q^-}{p}\right)'}$
 and a constant
 $$ c(\omega) = \left([\omega]_{A_p} [\omega]_{ RH_{\left(\frac{q^-}{p}\right)'}}\right)^\alpha \qquad \textrm{with} \qquad \alpha:=\max\left\{\frac{1}{p-1}, \frac{q^--1}{q^--p}\right\}.$$
\item[(c)]
Under $(G_\infty)$ then   \eqref{A} is valid for $p\in(1,\infty)$ and $s\in(0,1)$ with every weight $\omega \in A_p$
 and a constant
 $$ c(\omega) = [\omega]_{A_p}^\alpha \qquad \textrm{with} \qquad \alpha:=\max\left\{\frac{1}{p-1}, 1\right\}.$$
\end{itemize}
\end{theorem}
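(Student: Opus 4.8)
The plan is to derive \eqref{A} from Theorem~\ref{thm}, applied to the two paraproducts appearing in the decomposition of a product, each conjugated by the fractional powers $L^{\pm s/2}$.

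\textbf{Step 1: reduction to $L^p_\omega$-boundedness of $T_h:=L^{s/2}\Pi_h L^{-s/2}$.} Recall the product decomposition $fg=\Pi_g(f)+\Pi_f(g)$ (up to the normalising constant $c_N$), where $\Pi_h(f)=\int_0^\infty Q_t^{(N)}f\cdot P_t^{(N)}h\,\frac{dt}{t}$. Applying $L^{s/2}$ and inserting $\mathrm{Id}=L^{-s/2}L^{s/2}$ in front of $f$, resp.\ $g$ (legitimate since $L$ is injective, so $L^{-s/2}L^{s/2}=\mathrm{Id}$ on the class $\{h\in L^\infty:\ L^{s/2}h\in L^p_\omega\}$), we obtain
\[
  L^{s/2}(fg)=T_g\big(L^{s/2}f\big)+T_f\big(L^{s/2}g\big),\qquad T_h:=L^{s/2}\,\Pi_h\,L^{-s/2}.
\]
Hence, once we know that for every $h\in L^\infty$ the operator $T_h$ is bounded on $L^p_\omega$ with $\|T_h\|_{L^p_\omega\to L^p_\omega}\lesssim\|h\|_\infty\,c(\omega)$, taking $L^p_\omega$-norms of the displayed identity yields \eqref{A} with the stated constant (finiteness of the right-hand side also forces $L^{s/2}(fg)\in L^p_\omega$). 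So everything reduces to bounding $T_h$ via Theorem~\ref{thm}, i.e.\ to checking Assumption~\ref{assum-T} for $(M,\mu,L,T_h)$.

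\textbf{Step 2: Assumption~\ref{assum-T} for $T_h$.} Since the heat kernel has Gaussian bounds, \eqref{eq:OD-semigroup} holds with $p_0=1$, $q_0=\infty$. The admissible pair for $T_h$ is: (a) $p_0=1$, $q_0=2$ (only the $L^2$-bound for $\sqrt t\,\Gamma e^{-tL}$ is at hand); (b) $p_0=1$, $q_0=q^-$ for the fixed $q^-\in(p,q)$, obtained by interpolating $(G_q)$ against the $L^2$-$L^2$ Davies-Gaffney estimate; (c) $p_0=1$, $q_0=\infty$ under $(G_\infty)$. In each case the $L^{p_0}$-$L^{q_0}$ off-diagonal bounds of Assumption~\ref{assum-T}(b) for $T_h(tL)^Ne^{-tL}$ and the Cotlar-type inequality Assumption~\ref{assum-T}(c) for $T_h$ are precisely what is proved in \cite{BCF2} (relying on \cite{ACDH}): the point is that $L^{s/2}Q_t^{(N)}=c_N^{-1}t^{-s/2}(tL)^{N+s/2}e^{-tL}$ retains off-diagonal decay at scale $\sqrt t$, that the subordination formula $L^{-s/2}=c\int_0^\infty t^{s/2}e^{-tL}\,\frac{dt}{t}$ lets one absorb the non-local smoothing factor, and that $h\in L^\infty$ yields $\|Q_t^{(N)}h\|_\infty+\|P_t^{(N)}h\|_\infty\lesssim\|h\|_\infty$. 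This last bound also makes every constant in Assumption~\ref{assum-T} for $T_h$ linear in $\|h\|_\infty$, so the constant produced by Theorem~\ref{thm} carries a factor $\|h\|_\infty$.

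\textbf{Step 3: conclusion and main obstacle.} Theorem~\ref{thm} then gives $\|T_h\|_{L^p_\omega\to L^p_\omega}\le c_p\|h\|_\infty\big([\omega]_{A_{\frac{p}{p_0}}}[\omega]_{RH_{(\frac{q_0}{p})'}}\big)^{\alpha}$ with $\alpha=\max\{\frac1{p-p_0},\frac{q_0-1}{q_0-p}\}$ and $\mathfrak{p}=1+\frac{p_0}{q_0'}$. Substituting the values of $(p_0,q_0)$: case (a) gives $p\in(1,2)$, weight class $A_p\cap RH_{(2/p)'}$, $\mathfrak{p}=3/2$, and $\alpha=\max\{\frac1{p-1},\frac1{2-p}\}$; case (b) gives $p\in(1,q^-)\subset(1,q)$, weight class $A_p\cap RH_{(q^-/p)'}$, and $\alpha=\max\{\frac1{p-1},\frac{q^--1}{q^--p}\}$; case (c) gives $p\in(1,\infty)$ with $RH_{(\infty/p)'}=RH_1$ trivial, so the weight class is $A_p$ and $\alpha=\max\{\frac1{p-1},1\}$. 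Together with Step~1 this is exactly \eqref{A}. The only genuinely nontrivial part is Step~2 — verifying Assumption~\ref{assum-T}(b)--(c) for $T_h$ with the sharp off-diagonal exponent $\frac{\nu+1}{2}$, controlling the non-local factor $L^{-s/2}$, and (in case (b)) extracting the exponent $q^-$ from $(G_q)$ — but this has been carried out in \cite{BCF2}; within the present paper the proof is the reduction of Step~1 plus the bookkeeping of $(p_0,q_0)$ and of the $\|h\|_\infty$-dependence.
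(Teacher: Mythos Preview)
Your proposal is correct and follows essentially the same approach as the paper: reduce \eqref{A} via the paraproduct decomposition $fg=\Pi_g(f)+\Pi_f(g)$ to the $L^p_\omega$-boundedness of $T_h=L^{s/2}\Pi_h L^{-s/2}$, invoke \cite{BCF2} for the verification of Assumption~\ref{assum-T} with the appropriate $(p_0,q_0)$ in each case, and apply Theorem~\ref{thm}. Your write-up is in fact more explicit than the paper's, which merely states that one ``may apply Theorem~\ref{thm} to $T$'' after citing \cite{BCF2}; your bookkeeping of the pairs $(p_0,q_0)=(1,2),(1,q^-),(1,\infty)$ and the resulting exponents $\alpha$ is exactly what the paper leaves implicit.
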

 
Other estimates can be obtained by combining the result of this paper with the other estimates of \cite{BCF2}.

\subsection{Fourier multipliers} \label{subsec:fourier}

Let us also explain how we can recover the very recent results of \cite{BCDH}. The main linear result \cite[Theorem C]{BCDH} fits into our framework and corresponds to the particular case $q_0=\infty$. Let us focus on the application to linear Fourier multipliers.

Consider $m$ a linear symbol on ${\mathbb R}^\nu$ satisfying the H\"ormander condition $M(s,l)$,  which is
$$ \sup_{R>0} \left(R^{s|\alpha|-\nu} \int_{R\leq |\xi|\leq 2R} \left|\partial^\alpha_\xi m(\xi)\right|^s \, d\xi \right)^{1/s},$$
for all $|\alpha|\leq l$, some $s\in(1,2]$ and $l\in(\nu/s,\nu)$.
To this symbol, we associate the linear Fourier multiplier as 
$$ T(f)=T_m(f):= x \mapsto \int e^{ix.\xi} m(\xi) \widehat{f}(\xi)\, d\xi.$$ For every $r\in(\nu/l,\infty)$, \cite[Lemma 5.2]{BCDH} shows that the kernel of $T$ satisfies some $L^{r}$-$L^{\infty}$ regularity off-diagonal estimates. So consider a smooth function $\psi$ so that $\widehat{\psi}$ is supported on $B(0,4) \setminus B(0,1)$ and well-normalized so that $\int_0^\infty \widehat{\psi}(t\xi) \frac{dt}{t}=1$ for every $\xi$. Then with the elementary operators 
$$ T_t(f) = x \mapsto \int e^{ix.\xi} m(\xi) \widehat{\psi}(t\xi) \widehat{f}(\xi)\, d\xi,$$
it can be proved that Assumption \ref{assum-T} is satisfied for $p_0=r$ and $q_0=\infty$. Consequently Theorem \ref{thm} allows us to regain \cite[Theorem 5.3 (a)]{BCDH}. Moreover, since $T$ is self-adjoint, by duality we also deduce that the kernel of $T$ satisfies some $L^1$-$L^{r'}$ off-diagonal estimates. Similarly, one can then show that Assumption \ref{assum-T} is satisfied for $p_0=1$ and $q_0=r'$. Consequently Theorem \ref{thm} allows us to regain \cite[Theorem 5.3 (b)]{BCDH}.

So we regain the same full result as in \cite[Theorem 5.3]{BCDH}, with the exact same behavior of the weighted estimates with respect to the weight.

The same comparison can be done for the linear part of their main result \cite[Theorem C]{BCDH}. Under their assumptions $(H1)$ and $(H2)$, our Assumption \ref{assum-T} is satisfied with $q_0=\infty$. We leave the details to the reader.

\section{Unweighted boundedness of a certain maximal operator}
\label{sec:maxop}

Before introducing and studying a certain maximal operator related to $T$, we first explain some technical details for off-diagonal estimates.

\subsection{Off-diagonal estimates} 

We fix an integer $N>N_0$ (with $N_0$ as in Assumption \ref{assum-T}) and write for $t>0$
$$ T_t:= T Q_t^{(N)}.$$ 
Let $p\in(p_0,q_0)$. The Calder\'on reproducing formula (see Proposition \ref{prop:calderon}) gives the identity
$$ \textrm{Id} = \int_0^\infty Q_t^{(N)} \, \frac{dt}{t}$$ in $L^p$.
Since $T$ is assumed to be sublinear, we can decompose the operator for $f\in L^p$ into
\begin{align}
 |T(f)| \leq \int_0^\infty |T_t(f)| \, \frac{dt}{t}. \label{eq:decomposition}
\end{align}

Fix $t>0$ and the elementary operator $T_t$. From  Assumption \ref{assum-T} (b) we know that $T_t$ satisfies $L^{p_0}$-$L^{q_0}$ off-diagonal estimates at the scale $\sqrt{t}$. Then consider a ball $B$ of radius $r>0$ with $r\leq \sqrt{t}$ and its dilated ball $\tilde{B}:=\frac{\sqrt{t}}{r}B$. We have $B\subset \tilde B$ and $|\tilde B| \lesssim \left(\frac{\sqrt{t}}{r}\right)^\nu |B|$ and so
\begin{equation} \label{OD-I}
	\left(\aver{B} |T_t f|^{q_0}\,d\mu\right)^{1/q_0} 
	\lesssim \left(\frac{\sqrt{t}}{r}\right)^{\nu/q_0} \sum_{j \geq 0} 2^{-j(\nu+1)} \left(\aver{S_j(\tilde{B})} |f|^{p_0}\,d\mu\right)^{1/p_0}.
\end{equation}

\begin{lemma} \label{lemma} Consider three parameters $r,\eps,t>0$. Let $N \in \N$ with $N>\max(3\nu/2+1,N_0)$. 
\begin{itemize}
\item If $r^2<\eps <t$, we have for every ball $B_r$ of radius $r$ and $B_{\sqrt{\eps}}=\frac{\sqrt{\eps}}{r} B_{r}$ the dilated ball,
$$ \left( \aver{B_{\sqrt{\eps}}} \left| T_t(I-e^{-r^2L})^N f \right|^{q_0} \, d\mu \right)^{1/q_0} \lesssim \left(\frac{r^2}{t}\right)^{N/2} \sum_{\ell \geq 0} 2^{-\ell(\nu+1)}  \left( \aver{2^\ell B_{r}} \left|f \right|^{p_0} \, d\mu \right)^{1/p_0};$$
\item If $t<\eps < r^2$, we have for every ball $B_r$ of radius $r$, $j\geq 3$, every ball $B_{\sqrt{\eps}}$ of radius $\sqrt{\epsilon}$ included in $S_j(B_r)$, and every function $f$ supported on $B_r$,
$$ \left(\aver{B_{\sqrt{\eps}}} \left| T_t(I-e^{-r^2L})^N f \right|^{q_0} \, d\mu \right)^{1/q_0} \lesssim 2^{-j(\nu+1)} \left(\frac{t}{r^2}\right)^{1/2}  \left( \aver{B_r} \left|f \right|^{p_0} \, d\mu \right)^{1/p_0}.$$ 
\end{itemize}
The same estimates are true for $T_t(I-P_{r^2}^{(N)})$ in place of  $T_t(I-e^{-r^2L})^N$.
\end{lemma}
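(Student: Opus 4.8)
The plan is to rewrite both truncated operators in a single integral form built only from the operators $TQ_\tau^{(2N)}$, for which Assumption~\ref{assum-T}(b) supplies off-diagonal estimates, and then to integrate in $\tau$. First I would record the common representation: by the Calder\'on reproducing formula \eqref{along}, $I-P_{r^2}^{(N)}=-\int_0^{r^2}Q_s^{(N)}\,\tfrac{ds}{s}$, and composing with $T_t=TQ_t^{(N)}$ and using the identity $Q_t^{(N)}Q_s^{(N)}=c_N^{-2}c_{2N}\,t^Ns^N(t+s)^{-2N}Q_{t+s}^{(2N)}$ gives
$$
T_t\bigl(I-P_{r^2}^{(N)}\bigr)=-c_N^{-2}c_{2N}\,t^N\int_0^{r^2}s^{N-1}(t+s)^{-2N}\,TQ_{t+s}^{(2N)}\,ds.
$$
Likewise, writing $I-e^{-r^2L}=\int_0^{r^2}Le^{-sL}\,ds$ and expanding, $(I-e^{-r^2L})^N=\int_{(0,r^2)^N}L^Ne^{-\sigma L}\,d\vec s$ with $\sigma=s_1+\dots+s_N$, so that
$$
T_t\bigl(I-e^{-r^2L}\bigr)^N=c_N^{-1}c_{2N}\,t^N\int_0^{\infty}\rho_N(\sigma)(t+\sigma)^{-2N}\,TQ_{t+\sigma}^{(2N)}\,d\sigma,
$$
where $\rho_N(\sigma)\,d\sigma$ is the image of Lebesgue measure on $(0,r^2)^N$ under $\vec s\mapsto\sigma$, hence supported in $(0,Nr^2)$ with $\rho_N(\sigma)\lesssim\sigma^{N-1}$ and total mass $r^{2N}$. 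Thus both operators have the form $c\,t^N\int_0^\infty\mu(d\sigma)\,(t+\sigma)^{-2N}TQ_{t+\sigma}^{(2N)}$ for a measure $\mu$ on $[0,C_Nr^2]$ with $\tfrac{d\mu}{d\sigma}\lesssim\sigma^{N-1}$ and total mass $\simeq r^{2N}$; on its support $\tau:=t+\sigma\simeq t$ in the first regime ($r^2<\eps<t$, since then $\sigma\le Nr^2<Nt$) and $t\le\tau\lesssim r^2$ in the second ($t<\eps<r^2$). It is enough to treat this common form.

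The one remaining ingredient is an averaged off-diagonal ``brick'' for the $TQ_\tau^{(2N)}$ (note $2N\ge N_0$): starting from the pointwise $L^{p_0}$--$L^{q_0}$ off-diagonal estimates at scale $\sqrt\tau$ with decay $(1+d^2/\tau)^{-(\nu+1)/2}$ of Assumption~\ref{assum-T}(b), I would cover the source and averaging balls by finitely overlapping radius-$\sqrt\tau$ balls, apply the estimate pairwise, and sum with H\"older over the coverings; since $\tfrac1{p_0}+\tfrac1{p_0'}=1$ and $\tfrac1{q_0}+\tfrac1{q_0'}=1$, the scale mismatches produce only explicit powers of $\tau/\mathrm{rad}^2$, which then combine cleanly (this is the same mechanism used to pass from Assumption~\ref{assum-T}(b) to \eqref{OD-I}). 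In the regime $r^2<\eps<t$, where $\tau\simeq t$, this shows that $T_t(I-e^{-r^2L})^N$ (and $T_t(I-P_{r^2}^{(N)})$) satisfies off-diagonal estimates at scale $\sqrt t$ with the extra gain $t^N\cdot r^{2N}\cdot t^{-2N}=(r^2/t)^N$; dilating $B_{\sqrt\eps}$ to the radius-$\sqrt t$ ball $\widetilde B=\tfrac{\sqrt t}{r}B_r\supset B_{\sqrt\eps}$ loses $(t/\eps)^{\nu/(2q_0)}$ in the $q_0$-average, while re-indexing the coronas $S_k(\widetilde B)$ as dilates $2^\ell B_r$ (with $\ell=k+\tfrac12\log_2(t/r^2)+O(1)$) costs $(t/r^2)^{(\nu+1)/2}$. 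Since $\eps>r^2$, the total prefactor is $\lesssim(r^2/t)^{\,N-(\nu+1)/2-\nu/(2q_0)}\le(r^2/t)^{N/2}$, because $q_0>2$ and $N>\tfrac{3\nu}2+1\ge\nu+1+\nu/q_0$. This is the first assertion.

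In the regime $t<\eps<r^2$, $f$ is supported in $B_r$ and $B_{\sqrt\eps}\subset S_j(B_r)$ with $j\ge3$, so $d(B_r,B_{\sqrt\eps})\gtrsim2^jr$; since $\tau\lesssim r^2$ the off-diagonal factor is $\lesssim(2^{2j}r^2/\tau)^{-(\nu+1)/2}=2^{-j(\nu+1)}(\tau/r^2)^{(\nu+1)/2}$, and the mismatch powers of the brick turn $(\tau/r^2)^{(\nu+1)/2}$ into $(\tau/r^2)^{1/2}$ on the $B_r$ side (as $\tfrac{\nu+1}2-\tfrac\nu2=\tfrac12$) and leave at most a factor $(\tau/\eps)^{\nu/(2q_0)}$ on the $B_{\sqrt\eps}$ side, i.e.
$$
\Bigl(\aver{B_{\sqrt\eps}}|TQ_\tau^{(2N)}f|^{q_0}\,d\mu\Bigr)^{1/q_0}\lesssim 2^{-j(\nu+1)}\Bigl(\tfrac{\tau}{r^2}\Bigr)^{\!1/2}\max\!\Bigl(1,\tfrac\tau\eps\Bigr)^{\!\nu/(2q_0)}\Bigl(\aver{B_r}|f|^{p_0}\,d\mu\Bigr)^{1/p_0}.
$$
Inserting this into $t^N\int_0^\infty\mu(d\sigma)(t+\sigma)^{-2N}(\,\cdot\,)$ and splitting the $\sigma$-integral at $\sigma=\eps$, each piece is a one-dimensional integral of a function $\lesssim\sigma^{N-1}(t+\sigma)^{-2N+1/2}$ times a bounded power of $\sigma$, dominated by its lower endpoint: the part $\sigma<\eps$ yields $t^N\cdot t^{-N+1/2}=t^{1/2}$, and the part $\sigma>\eps$ yields $t^N\eps^{-N+1/2}\le t^{1/2}$ (using $t<\eps$ and $N>\tfrac12$). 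Regrouping the surviving $r$-powers into $(t/r^2)^{1/2}$ and keeping the untouched $2^{-j(\nu+1)}$ gives the second assertion; for $T_t(I-e^{-r^2L})^N$ one adds only the region $\sigma\in(r^2,Nr^2)$, which is handled crudely ($\lesssim(t/r^2)^N\le(t/r^2)^{1/2}$).

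The hard part will be the bookkeeping in the brick estimate: tracking exactly which powers of $\tau/\mathrm{rad}^2$ are produced by the coverings and by the volume normalizations, and checking that they appear with signs making them absorbable---by the abundant $(r^2/t)^N$ in the first regime and by the off-diagonal decay in the second. Everything else is integration of explicit powers of $\sigma$, and the passage between the two truncations is purely formal.
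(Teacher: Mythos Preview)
Your proof is correct and follows essentially the same strategy as the paper: write the truncation as an integral of $TQ_\tau^{(2N)}$ against a suitable weight, invoke the off-diagonal estimates of Assumption~\ref{assum-T}(b) at scale $\sqrt\tau$, and integrate in the auxiliary variable. The only notable difference is organizational: for $(I-e^{-r^2L})^N$ you expand the $N$-fold product and push forward to the measure $\rho_N(\sigma)\,d\sigma$ on $(0,Nr^2)$, whereas the paper uses the single-integral identity $(I-e^{-r^2L})^N=\int_0^{r^2}\psi_s^{(N)}(L)\,\tfrac{ds}{s}$ with $\psi_s^{(N)}(L)=N(1-e^{-sL})^{N-1}(sL)e^{-sL}$, then observes that $Q_t^{(N)}\psi_s^{(N)}(L)=\bigl(\tfrac{\min(s,t)}{\max(s,t)}\bigr)^N\Theta_{s,t}$ with $\Theta_{s,t}$ still enjoying $L^{p_0}$--$L^{q_0}$ off-diagonal bounds. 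Your unified treatment is a bit cleaner in that both operators are placed in the same integral form from the outset, and your second-regime bookkeeping (with the explicit $\max(1,\tau/\eps)^{\nu/(2q_0)}$ factor and the split at $\sigma=\eps$) is more transparent than the paper's terse line; conversely the paper's $\psi_s^{(N)}$ trick avoids the $N$-fold integral altogether. Both are equivalent in substance.
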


\begin{proof} Consider the first case $r^2<\eps <t$. 
We show the result for $T_t(I-P_{r^2}^{(N)})$, and then explain how to modify the proof in the case of $T_t(I-e^{-r^2L})^N$. 
By definition of $P_{r^2}^{(N)}$, we have
$$
	I-P_{r^2}^{(N)}=\int_0^{r^2} Q_s^{(N)} \,\frac{ds}{s}.
$$
Hence,
$$ \left( \aver{B_{\sqrt{\eps}}} \left| T_t(I-e^{-r^2L})^N f \right|^{q_0} \, d\mu \right)^{1/q_0} \lesssim \int_0^{r^2} \left( \aver{B_{\sqrt{\eps}}} \left| T_t Q_s^{(N)} f \right|^{q_0} \, d\mu \right)^{1/q_0} \, \frac{ds}{s}.$$
Note that $T_t=T Q_t^{(N)}$ and $Q_t^{(N)} Q_s^{(N)} = \left(\frac{s}{s+t}\right)^N Q_{s+t}^{(2N)}$ (up to a numerical constant) as well as $s+t \simeq t$.
Using Assumption \ref{assum-T} (b) and \eqref{OD-I} for $s<r^2$ with $r^2<\eps<t$, we obtain  that
\begin{align*} 
\left( \aver{B_{\sqrt{\eps}}} \left| T_t Q_s^{(N)} f \right|^{q_0} \, d\mu \right)^{1/q_0}  
& \lesssim \left(\frac{s}{t}\right)^{N}  \left( \aver{B_{\sqrt{\eps}}} \left| T_{s+t} f \right|^{q_0} \, d\mu \right)^{1/q_0} \\
& \lesssim \left(\frac{s}{t}\right)^{N} \left(\frac{t}{\eps}\right)^{\nu/2q_0} \sum_{\ell \geq 0} 2^{-\ell (\nu+1)} \left( \aver{2^\ell B_{\sqrt{t}}} \left|  f \right|^{p_0} \, d\mu \right)^{1/p_0},
\end{align*}
where we used that $s+t \lesssim t$. 
Since $s<r^2<\eps$, we can estimate $\left(\frac{s}{t}\right)^{N} \left(\frac{t}{\eps}\right)^{\nu/2q_0}$ by $\left(\frac{s}{t}\right)^{N-\nu/2q_0}$, and then deduce that with $k\geq 0$ such that $2^k r \simeq \sqrt{t}$
\begin{align*}
\left(\frac{s}{t}\right)^{N-\frac{\nu}{2q_0}} 2^{-\ell (\nu+1)} \left( \aver{2^\ell B_{\sqrt{t}}} \left|  f \right|^{p_0} \, d\mu \right)^{1/p_0}
& \lesssim \left(\frac{r^2}{t}\right)^{N/2} \left(\frac{s}{t}\right)^{\frac{N}{2}-\frac{\nu}{2q_0}}  2^{-\ell (\nu+1)}\left( \aver{2^\ell B_{\sqrt{t}}} \left|  f \right|^{p_0} \, d\mu \right)^{1/p_0} \\
& \lesssim \left(\frac{r^2}{t}\right)^{N/2} \left(\frac{s}{r^2}\right)^{\frac{\nu+1}{2}} 2^{-(\ell+k)(\nu+1)}  \left( \aver{2^{\ell+k} B_{r}} \left|  f \right|^{p_0} \, d\mu \right)^{1/p_0},
\end{align*}
where we used that $N$ is sufficiently large such that $\frac{N}{2}-\frac{\nu}{2q_0} > \frac{\nu+1}{2}$.
We then conclude by summing over $\ell$ and integrating in $s \in (0,r^2)$.

In the second case when $t<\eps < r^2$,  we follow the same reasoning: with $\tau =\max(s,t)$
\begin{align*} 
\left( \aver{B_{\sqrt{\eps}}} \left| T_t Q_s^{(N)} f \right|^{q_0} \, d\mu \right)^{1/q_0}  & \lesssim 
 \left(\frac{\min(s,t)}{\tau}\right)^{N-\frac{\nu}{2q_0}} \left(\frac{\tau}{2^{2j} r^2}\right)^{(\nu+1)/2} \left(\frac{r^2}{\tau}\right)^{\frac{\nu}{2p_0}} \left( \aver{B_r} \left|  f \right|^{p_0} \, d\mu \right)^{1/p_0} \\
  & \lesssim 
 \left(\frac{\min(s,t)}{\max(s,t)}\right)^{\frac{N}{2}-\frac{\nu}{2q_0}} \left(\frac{t}{r^2}\right)^{1/2}2^{-j(\nu+1)} \left( \aver{B_r} \left|  f \right|^{p_0} \, d\mu \right)^{1/p_0},
\end{align*}
where we used that $N>\nu+1$.
We may now integrate over $s$ and obtain the desired result.

The modifications required for the case $T_t(I-e^{-r^2L})^N$ are straightforward. 
Define $\psi_s^{(N)}(L)=N(1-e^{-sL})^{N-1} (sL)e^{-sL}$. Then $\psi_s^{(N)}(L)=-s\partial_s (I-(I-e^{-sL})^N)$ and 
$$
	(I-e^{-r^2L})^N = \int_0^{r^2} \psi_s^{(N)}(L) \,\frac{ds}{s}.
$$
Now we can also write $Q_t^{(N)}\psi_s^{(N)}(L) = \left(\frac{\min(s,t)}{\max(s,t)}\right)^N \Theta_{s,t}$ with some operator $\Theta_{s,t}$ satisfying $L^{p_0}$-$L^{q_0}$ off-diagonal estimates, and conclude as above. 
\end{proof}

Considering the particular case $\epsilon=r^2$, we may integrate in $t$ the previous two inequalities and from \eqref{eq:decomposition}, deduce the following result

\begin{coro} \label{cor} For an integer $N>\max(3\nu/2+1,N_0)$ and $r>0$, $T(I-e^{-r^2L})^N$ satisfies $L^{p_0}$-$L^{q_0}$ (strictly) off-diagonal estimates at the scale $r>0$: for $B_1,B_2$ two balls of radius $r>0$ with $d(B_1,B_2)>4r$ then for every function $f$ supported on $B_1$ we have
$$ \left(\aver{B_{2}} \left| T(I-e^{-r^2L})^N f \right|^{q_0} \, d\mu \right)^{1/q_0} \lesssim \left(1+\frac{d(B_1,B_2)}{r}\right)^{-(\nu+1)}  \left( \aver{B_1} \left|f \right|^{p_0} \, d\mu \right)^{1/p_0}.$$ 
\end{coro}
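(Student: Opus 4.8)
\textbf{Proof proposal for Corollary \ref{cor}.}

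The plan is to deduce the off-diagonal estimate for $T(I-e^{-r^2L})^N$ at scale $r$ by specialising Lemma \ref{lemma} to $\eps=r^2$ and integrating the resulting pointwise-in-$t$ bounds over $t\in(0,\infty)$, using the decomposition \eqref{eq:decomposition}, namely $|T(g)|\leq \int_0^\infty |T_t(g)|\,\frac{dt}{t}$ applied with $g=(I-e^{-r^2L})^Nf$. The integral naturally splits at $t=r^2$ into the two regimes covered by the two bullets of Lemma \ref{lemma}.

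First I would treat the high-frequency regime $t<r^2$ (equivalently $t<\eps=r^2$ in the second bullet, with $B_{\sqrt\eps}$ being simply $B_2$ since it has radius $r$). Writing $d:=d(B_1,B_2)$ and choosing $j\geq 3$ so that $2^jr\simeq d$ — here the hypothesis $d>4r$ guarantees $j\geq 3$ and that $B_2\subset S_j(B_1)$ up to a harmless dilation — the second bullet gives
\begin{align*}
\left(\aver{B_2}\left|T_t(I-e^{-r^2L})^Nf\right|^{q_0}d\mu\right)^{1/q_0}
\lesssim \left(1+\frac{d}{r}\right)^{-(\nu+1)}\left(\frac{t}{r^2}\right)^{1/2}\left(\aver{B_1}|f|^{p_0}d\mu\right)^{1/p_0},
\end{align*}
and since $\int_0^{r^2}(t/r^2)^{1/2}\,\frac{dt}{t}=\int_0^{r^2}(t/r^2)^{1/2}\frac{dt}{t}<\infty$ converges (the integrand is $t^{-1/2}r^{-1}$, integrable at $0$), integrating over $t\in(0,r^2)$ produces exactly the claimed factor $(1+d/r)^{-(\nu+1)}$ times the $L^{p_0}$-average of $f$ on $B_1$.

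Next I would treat the low-frequency regime $t>r^2$. Here one uses the first bullet of Lemma \ref{lemma} with $\eps=r^2$, so that $B_{\sqrt\eps}=B_r$ is a ball of radius $r$; covering $B_2$ by boundedly many such balls and, for each, expanding into coronas $2^\ell B_r$ around it, the first bullet yields a bound of the form $(r^2/t)^{N/2}\sum_{\ell\geq 0}2^{-\ell(\nu+1)}(\aver{2^\ell B_r}|f|^{p_0})^{1/p_0}$. Since $f$ is supported on $B_1$ and $d(B_1,B_2)=d>4r$, only those $\ell$ with $2^\ell r\gtrsim d$ contribute, which restores a geometric gain $\sum_{\ell:2^\ell\gtrsim d/r}2^{-\ell(\nu+1)}\lesssim (d/r)^{-(\nu+1)}$ together with a volume-ratio factor $|2^\ell B_r|/|B_1|$ controlled by $(2^\ell r/d)^\nu$; collecting these one gets $(r^2/t)^{N/2}(1+d/r)^{-(\nu+1)}$ times the $L^{p_0}$-average on $B_1$. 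Since $N>3\nu/2+1>1$, the factor $(r^2/t)^{N/2}$ is integrable against $\frac{dt}{t}$ over $t\in(r^2,\infty)$, and integrating gives the claim. Adding the two regimes and using the last sentence of Lemma \ref{lemma} (which asserts the same estimates hold verbatim for $T_t(I-P_{r^2}^{(N)})$, though here we only need the $(I-e^{-r^2L})^N$ version) completes the proof.

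\textbf{Main obstacle.} The only genuinely delicate point is the bookkeeping in the low-frequency regime: Lemma \ref{lemma}'s first bullet is stated for a \emph{fixed} ball $B_r$ and produces coronas $2^\ell B_r$ centred at that ball, whereas Corollary \ref{cor} wants a two-ball statement with $f$ supported on a \emph{distant} ball $B_1$. One must carefully use the support condition on $f$ — that the average $\aver{2^\ell B_r}|f|^{p_0}$ vanishes unless $2^\ell B_r$ meets $B_1$, i.e. unless $2^\ell r\gtrsim d(B_1,B_2)$ — and then keep track of the measure ratios $|2^\ell B_r|/|B_1|$ via \eqref{dnu} to convert the $\ell$-sum into the decay factor $(1+d(B_1,B_2)/r)^{-(\nu+1)}$ while not losing the $t$-integrability. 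Everything else is routine integration in $t$, using $N$ large.
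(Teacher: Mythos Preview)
Your proposal is correct and follows exactly the route indicated by the paper, which simply says: ``Considering the particular case $\epsilon=r^2$, we may integrate in $t$ the previous two inequalities and from \eqref{eq:decomposition}, deduce the following result.'' You have filled in the details the paper omits, including the key bookkeeping point in the low-frequency regime (using $\supp f\subset B_1$ to truncate the $\ell$-sum at $2^\ell r\gtrsim d(B_1,B_2)$); note that with $\eps=r^2$ one may take $B_r=B_2$ directly, so no covering of $B_2$ is actually needed, and the volume ratio $|B_1|/|2^\ell B_2|$ only needs to be bounded (not to carry extra decay), which follows from doubling once $2^\ell B_2$ meets $B_1$.
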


\subsection{Maximal operator}

We now fix an integer $N>\max(3\nu/2+1,N_0)$ (and all the implicit constants may depend on it).

\begin{definition}
 Define the maximal operator $T^{\#}$ of $T$ by
$$
	T^{\#}f(x) = \sup_{\substack{B \ball \\ B \ni x}} 
	\left(\aver{B} \left| T \int_{r(B)^2}^{\infty}  Q^{(N)}_t f\,\frac{dt}{t} \right|^{q_0}\,d\mu\right)^{1/q_0},  \qquad x \in M,
$$
for $f \in L^{q_0}_{\loc}$. 
\end{definition} 
By definition of $P^{(N)}_t:=\int_{1}^\infty Q^{(N)}_{st} \, \frac{ds}{s}$, we then have
$$
	T^{\#}f(x) = \sup_{\substack{B \ball \\ B \ni x}} 
	\left(\aver{B} \left| T P^{(N)}_{r(B)^2}f \right|^{q_0}\,d\mu\right)^{1/q_0},  \qquad x \in M,
$$
for $f \in L^{q_0}_{\loc}$.

\begin{lemma} Consider $(u_\eps)_\eps$ a sequence of $L^2$-functions which converges (in $L^2$) to some function $u \in L^2$, when $\eps$ tends to $0$. Then for almost every $x\in M$ we have
$$ |u(x)| \leq \liminf_{\eps \to 0}  \  \aver{B(x,\eps)} |u_\eps| \, d\mu.$$
\end{lemma}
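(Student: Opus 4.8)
The statement is a standard ``convergence in $L^2$ controls the averaged pointwise limit'' fact, and the plan is to reduce it to the Lebesgue differentiation theorem on the doubling space $(M,d,\mu)$. First I would pass to almost everywhere convergence along a subsequence: since $u_\eps \to u$ in $L^2$ as $\eps \to 0$, choose a sequence $\eps_k \downarrow 0$ with $u_{\eps_k} \to u$ $\mu$-a.e.\ and also, say, $\sum_k \|u_{\eps_k} - u\|_{L^2} < \infty$. For the $\liminf$ on the right-hand side it suffices to bound the corresponding $\liminf$ along this subsequence from below, so it is enough to show
$$
 |u(x)| \le \liminf_{k\to\infty} \aver{B(x,\eps_k)} |u_{\eps_k}|\, d\mu \qquad \text{for $\mu$-a.e.\ } x \in M.
$$

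Next I would split the average using the triangle inequality:
$$
 \aver{B(x,\eps_k)} |u_{\eps_k}|\, d\mu
 \ \ge\ \aver{B(x,\eps_k)} |u|\, d\mu
 \ -\ \aver{B(x,\eps_k)} |u_{\eps_k} - u|\, d\mu.
$$
The first term tends to $|u(x)|$ for a.e.\ $x$ by the Lebesgue differentiation theorem, which holds on $(M,d,\mu)$ because of the doubling property \eqref{d} (the uncentered maximal operator $\calM$ is weak-$(1,1)$, hence Lebesgue points are of full measure for any $L^1_{\loc}$ function, and $u \in L^2 \subset L^1_{\loc}$). So the remaining task is to show that the error term $\aver{B(x,\eps_k)} |u_{\eps_k} - u|\, d\mu$ tends to $0$ for a.e.\ $x$.

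For the error term I would estimate crudely by the maximal function at a fixed scale: for $k$ large, $\eps_k \le 1$, so on a fixed ball $B_0 = B(x_0, R)$ one has, for $x \in B_0$,
$$
 \aver{B(x,\eps_k)} |u_{\eps_k} - u|\, d\mu \ \le\ \calM\big(|u_{\eps_k} - u|\big)(x).
$$
Set $g_k := \calM(|u_{\eps_k} - u|)$. Using the weak-$(1,1)$ bound for $\calM$ together with $\sum_k \|u_{\eps_k}-u\|_{L^1(B_0)} < \infty$ (which follows from $\sum_k \|u_{\eps_k}-u\|_{L^2} < \infty$ and $|B_0| < \infty$ via Cauchy--Schwarz), a Borel--Cantelli argument gives $g_k(x) \to 0$ for $\mu$-a.e.\ $x \in B_0$: indeed $\mu(\{x \in B_0 : g_k(x) > \delta\}) \lesssim \delta^{-1}\|u_{\eps_k}-u\|_{L^1}$ is summable in $k$ for each fixed $\delta > 0$, so a.e.\ $x$ lies in only finitely many of these sets, whence $\limsup_k g_k(x) \le \delta$ for every $\delta$. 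Since $M$ is $\sigma$-compact (locally compact separable metric), exhausting $M$ by countably many such balls $B_0$ yields $g_k \to 0$ $\mu$-a.e.\ on $M$. Combining the three displays, for a.e.\ $x$,
$$
 \liminf_{k\to\infty} \aver{B(x,\eps_k)} |u_{\eps_k}|\, d\mu
 \ \ge\ \lim_{k\to\infty}\aver{B(x,\eps_k)} |u|\, d\mu \ -\ \lim_{k\to\infty} g_k(x) \ =\ |u(x)|,
$$
which is the claim.

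\textbf{Main obstacle.} There is no serious obstacle; the only point requiring care is handling the error term $u_{\eps_k}-u$ whose ``radius'' $\eps_k$ in the average is tied to the perturbation itself — this is why one cannot simply invoke Lebesgue differentiation for a single function, and why the subsequence-plus-Borel--Cantelli device (dominating the shrinking average by a fixed maximal function and summing weak-type estimates) is needed. Everything else is the doubling Lebesgue differentiation theorem and routine measure theory.
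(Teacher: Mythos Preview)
Your overall strategy --- Lebesgue differentiation for $u$, the triangle-inequality split, and control of the error $\aver{B(x,\eps)}|u_\eps-u|$ by the maximal function --- is exactly the paper's. The gap lies in your very first reduction: you claim that in order to bound $\liminf_{\eps\to 0}\aver{B(x,\eps)}|u_\eps|$ from below it suffices to bound the $\liminf$ along a chosen subsequence $(\eps_k)$. This goes the wrong way: for any subsequence one has $\liminf_k a_{\eps_k}\ge \liminf_{\eps\to 0}a_\eps$, so an inequality along $(\eps_k)$ does not transfer downwards to the full family. The remainder of your argument (Borel--Cantelli for $\calM(|u_{\eps_k}-u|)$ along the summable subsequence) is correct and establishes exactly $|u(x)|\le \liminf_k \aver{B(x,\eps_k)}|u_{\eps_k}|$ a.e., which is only the subsequence statement.

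The paper's own proof is equally brief on this point (it stops at ``$\calM[u_\eps-u]\to 0$ in $L^2$'', which again yields a.e.\ convergence only along a subsequence), and in fact the lemma as literally stated with the full $\liminf$ is false. On $\R$ take $u=\chi_{[0,1]}$, $\eps_n=1/n$, and $u_n=\chi_{[0,1]\setminus B(c_n,2/n)}$ with the centers $c_n\in[0,1]$ chosen in typewriter fashion so that every $x\in(0,1)$ lies in $(c_n-1/n,c_n+1/n)$ for infinitely many $n$ (possible since $\sum 1/n=\infty$); then $\|u_n-u\|_{L^2}\le(4/n)^{1/2}\to 0$, yet for each such $n$ one has $B(x,1/n)\subset B(c_n,2/n)$ and hence $\aver{B(x,1/n)}|u_n|=0$, giving $\liminf_n\aver{B(x,1/n)}|u_n|=0<1=|u(x)|$ for every $x\in(0,1)$. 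So contrary to your ``no serious obstacle'' remark, the coupling of the radius to the index is a genuine obstruction. What both your argument and the paper's actually establish --- the inequality along a suitable subsequence --- is, however, exactly what is needed for the subsequent corollary, where only the supremum over $\eps$ is used.
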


\begin{proof} 
Due to the Lebesgue differentiation lemma, we know that
$$ |u(x)| \leq \liminf_{\eps \to 0} \   \aver{B(x,\eps)} |u| \, d\mu.$$
Then we split, as follows
$$  \aver{B(x,\eps)} |u| \, d\mu \leq \aver{B(x,\eps)} |u_\eps| \, d\mu + \aver{B(x,\eps)} |u-u_\eps| \, d\mu.$$
The second part is pointwisely bounded by $\calM[u_\epsilon-u](x)$ which converges in $L^2$ to $0$ (due to the $L^2$-boundedness of the maximal function), which allows us to conclude the proof.
\end{proof}

As a consequence of the previous lemma with the $L^2$-boundedness  of $T$ and Proposition \ref{prop:calderon}, we deduce the following result:

\begin{coro} For every function $f \in L^2$  we have almost everywhere
$$ \left|T(f)\right| \leq T^{\#}(f).$$
\end{coro}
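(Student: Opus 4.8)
The plan is to combine the two preceding results—the pointwise comparison lemma for $L^2$-convergent sequences and the Calder\'on reproducing formula—with the mapping property defining $T^{\#}$. Fix $f\in L^2$. Since $T$ is bounded on $L^2$ (Assumption \ref{assum-T} (a)), and since by Proposition \ref{prop:calderon} we have $P_\eps^{(N)}f\to f$ in $L^p$ for all $p\in(p_0,q_0)$, in particular in $L^2$, the operator $T P_\eps^{(N)}$ applied to $f$ converges to $Tf$ in $L^2$ as $\eps\to 0^+$. I would therefore set $u:=Tf$ and $u_\eps:=TP_\eps^{(N)}f$, apply the previous lemma, and obtain
$$
 |Tf(x)| \leq \liminf_{\eps\to 0}\ \aver{B(x,\eps)} \left|TP_\eps^{(N)}f\right|\,d\mu
$$
for almost every $x\in M$.

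Next I would identify the right-hand side with a quantity controlled by $T^{\#}f(x)$. For each fixed $\eps>0$, the ball $B(x,\eps)$ has radius $\eps$, so $r(B(x,\eps))^2=\eps^2$; renaming parameters (replacing $\eps$ by $\sqrt{\eps}$, or equivalently working with the ball $B(x,\eps)$ of radius $\eps$ and noting $P^{(N)}_{\eps^2}$ is exactly the operator appearing in the definition with $r(B)=\eps$) the average $\aver{B(x,\eps)}|TP^{(N)}_{\eps^2}f|\,d\mu$ is, by Jensen's inequality with exponent $q_0\ge 1$, bounded by $\big(\aver{B(x,\eps)}|TP^{(N)}_{r(B(x,\eps))^2}f|^{q_0}\,d\mu\big)^{1/q_0}$, which in turn is $\leq T^{\#}f(x)$ since $B(x,\eps)$ is an admissible ball containing $x$ in the supremum defining $T^{\#}$. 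Taking the $\liminf$ over $\eps\to 0$ preserves this bound, giving $|Tf(x)|\leq T^{\#}f(x)$ a.e.

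The only genuinely delicate point is matching the scale parameter: the lemma on $L^2$-convergent sequences uses averages over $B(x,\eps)$ with the \emph{same} $\eps$ that indexes the sequence $u_\eps$, whereas $T^{\#}$ pairs a ball of radius $r$ with the operator $P^{(N)}_{r^2}$. One must therefore take the approximating sequence to be $u_\eps := T P^{(N)}_{\eps^2} f$ rather than $T P^{(N)}_{\eps} f$; this is still a valid $L^2$-convergent sequence by Proposition \ref{prop:calderon} (since $\eps^2\to 0$ as $\eps\to 0$) together with $L^2$-boundedness of $T$, so the lemma applies verbatim and the scale in the average then exactly matches the definition of $T^{\#}$. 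Everything else—the passage through Jensen and the recognition that $B(x,\eps)$ is admissible in the supremum—is routine. I do not expect any real obstacle beyond this bookkeeping.
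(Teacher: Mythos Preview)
Your proposal is correct and is exactly the approach the paper takes: it invokes the preceding lemma together with the $L^2$-boundedness of $T$ and Proposition \ref{prop:calderon}, precisely as you do. Your explicit handling of the scale-matching (taking $u_\eps = TP^{(N)}_{\eps^2}f$ so that the ball radius $\eps$ pairs with $P^{(N)}_{r(B)^2}$ in the definition of $T^{\#}$) is the right bookkeeping, and the paper leaves this detail implicit.
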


\begin{proposition} \label{prop:op-max} The sublinear operator $T^{\#}$ is of weak-type $(p_0,p_0)$ and bounded in $L^p$ for every $p\in (p_0,2]$.
\end{proposition}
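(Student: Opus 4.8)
The plan is to reduce the boundedness of $T^{\#}$ to a local oscillation/off-diagonal argument, exploiting the splitting $P_{r(B)^2}^{(N)} = \mathrm{Id} + \int_0^{r(B)^2} Q_s^{(N)}\,\frac{ds}{s}$ from \eqref{along} together with Corollary \ref{cor}. First, I would fix a ball $B\ni x$ of radius $r$ and decompose $TP_r^{(N)}f = Te^{-r^2L}f - T(e^{-r^2L}-P_r^{(N)})f$, or more directly use the identity $P_r^{(N)} = \mathrm{Id} - (I-P_r^{(N)})$ and write $TP_r^{(N)}f = Tf - T(I-P_r^{(N)})f$; but since $T$ itself need not be $L^{q_0}_{\mathrm{loc}}$-bounded, the cleaner route is to invoke Assumption \ref{assum-T}(c), which bounds $\big(\aver{B}|Te^{-r^2L}f|^{q_0}\big)^{1/q_0}$ by $\inf_{B}\calM_{p_1}(Tf)+\inf_B\calM_{p_1}(f)$, and then to control the remainder $T(e^{-r^2L}-P_{r^2}^{(N)})f$. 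By Remark \ref{rem:Pt}(ii), $e^{-r^2L}-P_{r^2}^{(N)}$ is of the form $\tilde p(r^2L)e^{-r^2L}$ with $\tilde p$ a polynomial vanishing at $0$, hence it can be written as a sum of operators $(r^2L)^k e^{-r^2L}$, $k\ge 1$, each enjoying, after composition with $T$, the $L^{p_0}$-$L^{q_0}$ off-diagonal decay of Assumption \ref{assum-T}(b).

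Second, with these two ingredients I would estimate, for $f$ supported near $B$ versus far from $B$, via the standard annular decomposition $f=\sum_j f\Eins_{S_j(B)}$: the local piece $f\Eins_{2B}$ is handled by (c) and by the $L^{p_0}$-$L^{q_0}$ boundedness of $Te^{-r^2L}$ on $2B$, producing $\calM_{p_1}(Tf)+\calM_{p_1}(f)$ terms; the far pieces are handled by the off-diagonal bound for $T(r^2L)^ke^{-r^2L}$ from Assumption \ref{assum-T}(b) (scale $\sqrt t = r$), giving $\sum_{j\ge 1} 2^{-j(\nu+1)}\big(\aver{S_j(B)}|f|^{p_0}\big)^{1/p_0}\lesssim \calM_{p_0}f(x)$. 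Taking the supremum over all $B\ni x$ yields the pointwise bound
\[
 T^{\#}f(x) \lesssim \calM_{p_1}(Tf)(x) + \calM_{p_1}(f)(x) + \calM_{p_0}(f)(x).
\]
Since $p_1\ge p_0$, the last term is dominated by the middle one, so $T^{\#}f \lesssim \calM_{p_1}(Tf)+\calM_{p_1}(f)$.

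Third, I would conclude by interpolation and the known unweighted boundedness of $T$. Recall from \cite[Theorems 1.1 and 1.2]{memoirs} (applicable since $T$ is $L^2$-bounded, satisfies the Cotlar inequality (c) and, by Corollary \ref{cor}, off-diagonal estimates) that $T$ is bounded on $L^p$ for all $p\in(p_0,q_0)$, and in fact of weak-type $(p_0,p_0)$. Then $\calM_{p_1}$ is bounded on $L^p$ for $p>p_1$ and weak-type $(p_1,p_1)$; combining, $\calM_{p_1}(Tf)$ and $\calM_{p_1}(f)$ are bounded on $L^p$ for $p\in(p_1,2]$, which together with the weak-$(p_0,p_0)$ bound for $T$ (and hence, after a small argument, for $\calM_{p_0}(Tf)$ at the endpoint via the full strength of the memoir theorem giving $T:L^{p_0}\to L^{p_0,\infty}$) gives $T^{\#}$ weak-type $(p_0,p_0)$; Marcinkiewicz interpolation between the weak-$(p_0,p_0)$ estimate and the strong $L^2$ estimate then yields boundedness on $L^p$ for all $p\in(p_0,2]$.

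The main obstacle I anticipate is the endpoint $p=p_0$: there $\calM_{p_1}$ with $p_1>p_0$ is \emph{not} weak-$(p_0,p_0)$ bounded, so one cannot simply quote maximal function bounds. The fix is to avoid passing through $\calM_{p_1}$ at the endpoint and instead argue directly at scale $p_0$: redo the annular estimate of $T^{\#}f(x)$ keeping $\big(\aver{B}|Te^{-r^2L}f|^{q_0}\big)^{1/q_0}$ controlled by a term that, when $f\in L^{p_0}$, can be absorbed using the weak-$(p_0,p_0)$ bound for $T$ from \cite{memoirs} directly (rather than its $\calM_{p_1}$-dominated form). Concretely, one estimates the bad set $\{T^{\#}f>\lambda\}$ by a Calderón--Zygmund decomposition of $f$ at level $\lambda^{p_0}$, treating the good part by the $L^2$-bound and the bad part by the off-diagonal decay of $T(I-e^{-r^2L})^N$ (Corollary \ref{cor}) summed over the CZ cubes — this is precisely where the gain $(\nu+1)$ in the exponent of Corollary \ref{cor} makes the series converge. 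I would carry out this CZ argument as the technical heart of the proof, with the interpolation for $p\in(p_0,2]$ being routine once the two endpoints are in hand.
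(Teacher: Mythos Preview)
Your overall strategy coincides with the paper's: establish the Cotlar-type inequality $T^{\#}f \lesssim \calM_{p_1}(Tf) + \calM_{p_1}f$ to get $L^2$-boundedness, then use a Calder\'on--Zygmund/extrapolation argument for the weak-$(p_0,p_0)$ endpoint, and finally interpolate. The Cotlar part of your sketch is fine and matches the paper (which splits $P_{r^2}^{(N)} = p(r^2L)e^{-r^2L}$ and composes Assumption~\ref{assum-T}(c) with off-diagonal estimates).

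The gap is at the endpoint. In the CZ argument for $T^{\#}$, after splitting $f=g+\sum_j b_j$, the bad-part estimate requires controlling $T^{\#}(I-e^{-r_j^2L})^N b_j$ away from the cube $Q_j$ --- that is, you need $L^{p_0}$-$L^2$ off-diagonal estimates for $T^{\#}(I-e^{-r^2L})^N$, not for $T(I-e^{-r^2L})^N$. Corollary~\ref{cor}, which you cite, only gives the latter. The supremum over all scales $\eps$ in the definition of $T^{\#}$ makes this genuinely harder: for each $x\in S_j(B_r)$ you must control $\big(\aver{B_{x,\eps}}|TP_\eps^{(N)}(I-e^{-r^2L})^N b|^{q_0}\big)^{1/q_0}$ \emph{uniformly in $\eps$}, and the interaction between the two scales $\eps$ and $r^2$ forces a case analysis ($\eps>r^2$ versus $\eps<r^2$) handled via both parts of Lemma~\ref{lemma}. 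This is precisely the technical heart of the paper's Step~2, and it does not reduce to Corollary~\ref{cor}. Relatedly, your aside that the weak-$(p_0,p_0)$ bound for $T$ transfers to $\calM_{p_0}(Tf)$ or $\calM_{p_1}(Tf)$ is not justified: $\calM_{p_0}$ does not map $L^{p_0,\infty}$ to itself, so composing a weak-type bound for $T$ with a maximal function at the same level fails.
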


\begin{rem} In the definition of the maximal operator, the previous boundedness still holds if we replace the average on the ball $B$ by any average on $\lambda B$ for some constant $\lambda>1$. In such case, the implicit constants will depend on $\lambda$.
\end{rem}

\begin{proof}
\noindent
\textbf{Step 1:} $L^2$-boundedness of $T^{\#}$. \\
We first claim that $T^{\#}$ satisfies the following Cotlar type inequality ($p_1 \in [p_0,2)$ is introduced in Assumption \ref{assum-T}): 
\begin{equation} \label{eq:Cotlar}
	T^{\#}f(x) \lesssim \calM_{p_1}(Tf)(x) + \calM_{p_1}f(x), \qquad x \in M. 
\end{equation}
Indeed, 
$$ T^{\#}f(x) = \sup_{\substack{B \ball \\ B \ni x}}  \left(\aver{B} \left| T P^{(N)}_{r(B)^2}f \right|^{q_0}\,d\mu\right)^{1/q_0},$$
and since $N$ is an integer, we have by Remark \ref{rem:Pt}, with $r=r(B)$, that
$$ P^{(N)}_{r^2} = p(r^2 L) e^{-r^2L},$$
with $p$ a polynomial function. 
We then split 
$$  T P^{(N)}_{r^2} = \left(T e^{-\frac{r^2}{2}L} \right) \left( p(r^2L) e^{-\frac{r^2}{2}L}\right).$$
By Assumption \ref{assum-T} $(c)$, $T e^{-\frac{r^2}{2}L}$ satisfies some $L^{p_1}$-$L^{q_0}$ estimates and by Assumption \ref{assum-T} $(b)$ and Lemma \ref{lemma}, both $T \big(I-P^{(N)}_{r^2}\big)$ and $p(r^2L) e^{-\frac{r^2}{2}L}$ satisfy $L^{p_1}$-$L^{p_1}$ off-diagonal estimates at the scale $r$. We may compose these two estimates in order to obtain similar estimates as Assumption \ref{assum-T} $(c)$ for $TP^{(N)}_{r^2}$ and then directly obtain \eqref{eq:Cotlar}.

This in particular implies that $T^{\#}$ is bounded on $L^2$, since $T$ is bounded on $L^2$ by assumption, and the Hardy-Littlewood maximal operator $\calM_{p_1}$ is bounded on $L^2$ as $p_1<2$. 

In a second step, we now use the extrapolation method of \cite{memoirs,BK3} to show that $T^{\#}$ is of weak type $(p_0,p_0)$, which by interpolation with the $L^2$-boundedness will conclude the proof of the proposition.

\textbf{Step 2:} Weak-type $(p_0,p_0)$ of $T^{\#}$.\\
 We apply \cite[Theorem 1.1]{memoirs} (see also \cite{BK3}).
 As shown in Step 1, $T^{\#}$ is bounded on $L^2$. 
By assumption, we know that $(e^{-tL})_{t>0}$ satisfies $L^{p_0}$-$L^2$ off-diagonal estimates. It remains to show that 
$
	T^{\#}(I-e^{-tL})^N
$
satisfies $L^{p_0}$-$L^2$ off-diagonal estimates (not including the diagonal), where we will use (for convenience but it could be chosen differently) the same integer $N$ as the one defining the maximal operator, which is chosen sufficiently large. 
More precisely, for a ball $B \subseteq M$ of radius $r$ and a function $b \in L^{p_0}$ with $\supp b \subseteq B$, we will show that 
\begin{equation} \label{eq:OD-est-extrapol}	
	|2^{j+1}B|^{-1/2} \|T^{\#}(I-e^{-r^2L})^{N} b\|_{L^2(S_j(B))}
	\lesssim c(j) |B|^{-1/p_0} \|b\|_{L^{p_0}(B)}, \qquad j \geq 3, 
\end{equation}
with coefficients $c(j)$ satisfying $\sum_{j \geq 2} c(j) 2^{\nu j} <\infty$. \\

For $x \in M$ and $\eps>0$, denote by $B_{x,\eps}$ a ball of radius $\sqrt{\eps}$ containing $x$. Then recall that $T_t=TQ_t^{(N)}$ and 
\begin{align*}
	T^{\#}\big[(I-e^{-r^2L})^{N} b\big](x) 
	\leq \sup_{\eps>0} \left(\aver{B_{x,\eps}} \left| T \int_\eps^\infty Q_t^{(N)} \big[(I-e^{-r^2L})^{N} b\big]\,\frac{dt}{t} \right|^{q_0}\,d\mu \right)^{1/q_0}.
\end{align*}
Let $x \in S_j(B)$ for $j \geq 3$ and consider first the case $r^2<\eps$.
Applying Lemma \ref{lemma} (Part 1), we then deduce that
$$ \left(\aver{B_{x,\eps}} \left| T_t (I-e^{-r^2 L})^{N} b \right|^{q_0}\,d\mu \right)^{1/q_0} \lesssim \left(\frac{r^2}{t}\right)^{N/2} \left(1+\frac{d(B,B_{x,\eps})^2}{t} \right)^{-\frac{\nu+1}{2}} \left( \aver{B} |b|^{p_0} \, d\mu \right)^{1/p_0}.$$
Since $\epsilon<t$, it follows that
either $2^j r \geq \sqrt{t}$ and then $d(B, B_{x,\eps}) \simeq 2^j r$ or $2^j r\leq \sqrt{t}$ and then $d(B,B_{x,\eps})\leq 2 \sqrt{t}$. So in both situations, we have
$$ \left(1+\frac{d(B,B_{x,\eps})^2}{t} \right) \simeq \left(1+\frac{4^j r^2}{t} \right).$$
Consequently, we get
$$ \left(\aver{B_{x,\eps}} \left| T_t  (I-e^{-r^2 L})^{N} b \right|^{q_0}\,d\mu \right)^{1/q_0} \lesssim \left(\frac{r^2}{t}\right)^{N/2} \left(1+\frac{4^j r^2}{t} \right)^{-\frac{\nu+1}{2}} \left( \aver{B} |b|^{p_0} \, d\mu \right)^{1/p_0}.$$
We then have to integrate along $t\in(\eps,\infty)$ and we split the integral into two parts, depending whether $t <4^{j}r^2$ or $t>4^{j}r^2$. We then obtain that 
\begin{align*}
& \left(\aver{B_{x,\eps}} \left|  \int_\eps^\infty T_t (I-e^{-r^2 L})^{N} b\,\frac{dt}{t} \right|^{q_0}\,d\mu \right)^{1/q_0} \\
& \qquad \lesssim \left[\int_{\eps}^{4^j r^2} 2^{-j(\nu+1)} \left(\frac{t}{r^2}\right)^{1/4} \, \frac{dt}{t} + \int_{4^j r^2}^\infty \left(\frac{r^2}{t}\right)^{N/2} \, \frac{dt}{t} \right] \left( \aver{B} |b|^{p_0} \, d\mu \right)^{1/p_0} \\
& \qquad \lesssim 2^{-j(\nu+1/2)}\left( \aver{B} |b|^{p_0} \, d\mu \right)^{1/p_0},
\end{align*}
which corresponds to the desired estimate \eqref{eq:OD-est-extrapol} with $c(j)=2^{-j(\nu+1/2)}$.

Consider now the case $\eps\leq r^2$.  Let again $x \in S_j(B)$ for $j \geq 3$. 
We split the corresponding part of $T^{\#}(I-e^{-r^2L})^{N} b(x)$ into 
\begin{align} \label{eq:decomp-eps-r} \nonumber
	& \sup_{\eps<r^2} \left(\aver{B_{x,\eps}} |T (I-e^{-r^2L})^{N} b|^{q_0}\,d\mu \right)^{1/q_0}\\
	& \qquad +
	\sup_{\eps<r^2} \left(\aver{B_{x,\eps}} | \int_0^\eps T_t (I-e^{-r^2L})^{N} b\,\frac{dt}{\sqrt{t}}|^{q_0}\,d\mu \right)^{1/q_0}
	=: I_1(x) + I_2(x). 
\end{align}
Denote by $\tilde{S}_j(B)$ the slightly enlarged annulus such that $B_{x,\eps} \subseteq \tilde{S}_j(B)$ for $x \in S_j(B)$.
We estimate the first term $I_1(x)$ in \eqref{eq:decomp-eps-r} against the maximal function, localised in $\tilde{S}_j(B)$ due to the restriction of the supremum to small $\eps$ and the assumption $j \geq 3$. This gives for $x \in S_j(B)$
\begin{align*}
	I_1(x)	
	 \lesssim \calM_{q_0} (\Eins_{\tilde{S}_j(B)} T(I-e^{-r^2L})^{N} b)(x).
\end{align*}
By H\"older's inequality and Kolmogorov's lemma (see, e.g., \cite[Lemma 5.16]{Duo}) for $\calM_{q_0}$, we have 
\begin{align*}
	|2^{j+1}B|^{-1/2} \|I_1\|_{L^2(S_j(B))}
	& \lesssim |2^{j+1}B|^{-1/2} \|\calM_{q_0} (\Eins_{\tilde{S}_j(B)} T(I-e^{-r^2L})^{N} b)\|_{L^2(2^jB)}\\
	& \lesssim |2^{j+1}B|^{-1/q_0} \|T(I-e^{-r^2 L})^{N} b\|_{L^{q_0}(\tilde{S}_j(B))}.
\end{align*}
By Corollary \ref{cor}, we know that $T(I-e^{-r^2L})^{N}$ satisfies $L^{p_0}$-$L^{q_0}$ (strictly) off-diagonal estimates at the scale $r$, thus giving \eqref{eq:OD-est-extrapol} for this part with coefficients $c(j)=2^{-j(\nu+1)}$. 

For $I_2$, on the other hand, we can directly estimate (using Lemma \ref{lemma} - Part 2)
\begin{align*}
	|B_{x,\eps}|^{-1/q_0} \| T_t (I-e^{-r^2L})^{N} b\|_{L^{q_0}(B_{x,\eps})} 
	& \lesssim 2^{-j(\nu+1)} \left(\frac{t}{r^2}\right)^{1/2}  \left( \aver{B} \left|b \right|^{p_0} \, d\mu \right)^{1/p_0}.
%
\end{align*}
Therefore, we may then integrate in $t\in(0,\eps)$.
By taking the supremum over $\eps\in(0,r^2)$ and over $x$, and together with Minkowski's inequality, we obtain \eqref{eq:OD-est-extrapol} also for $I_2$ with coefficients $c(j)=2^{-j(\nu+1)}$.
\end{proof}

\section{Boundedness of the maximal operator by sparse operators}

As done in previous works (see for example \cite{Petermichl, Hytonen, Lerner1, Lerner2, Lerner3, Lacey}), the analysis will involve a discrete stopping-time argument which relies on nice properties associated with a dyadic structure, which is by now well-known in the context of doubling space. We first recall the main results and then by using this structure we detail the stopping-time argument to bound the maximal operator $T^{\#}$ by some specific operators, called `{\it sparse operators}'.

\subsection{Preliminaries and reminder on dyadic analysis}

We first recall several results about the construction of adjacent dyadic systems (see \cite{Christ, SaWh, Hytonen-Kairema} for more details).

\begin{definition} Let us fix some constants $0<c_0\leq C_0 <\infty$ and $\delta \in (0,1)$. A \emph{dyadic system} (of parameters $c_0,C_0,\delta$) is a family of open subsets $(Q^\ell_\alpha)_{\alpha\in \mathscr{A}_\ell,\, \ell \in \Z}$ satisfying the following properties:
\begin{itemize}
\item For every $\ell \in \Z$, the ambient space $M$ is covered (up to a set with vanishing measure) by the disjoint union of the subsets at scale $\ell$: there exists $Z_\ell$ with $\mu(Z_\ell)=0$ such that
$$ M= \bigsqcup_{\alpha\in \mathscr{A}_\ell} Q^\ell_\alpha \bigsqcup Z_\ell;$$
\item If $\ell \geq k$, $\alpha\in \mathscr{A}_k$ and $\beta \in \mathscr{A}_\ell$ then either $Q^{\ell}_{\beta}\subseteq Q^k_{\alpha}$ or $Q^k_{\alpha}\cap Q^{\ell}_{\beta}=\emptyset$;
\item For every $\ell \in \Z$ and $\alpha \in \mathscr{A}_\ell$, there exists a point $z^\ell_\alpha$ with
\begin{equation}\label{eq:contain}
  B(z^\ell_{\alpha},c_0\delta^\ell)\subseteq Q^\ell_{\alpha}\subseteq B(z^\ell_{\alpha},C_0\delta^\ell)=:B(Q^\ell_{\alpha}).
\end{equation}
\end{itemize}
For a cube $Q^k_\alpha$, $k \in \Z$, $\alpha \in \mathscr{A}_k$, we call the unique cube $Q^{k-1}_\beta$, $\beta \in \mathscr{A}_{k-1}$ for which $Q^k_\alpha \subseteq Q^{k-1}_\beta$, the \emph{parent} of $Q^k_\alpha$. We denote for $Q \in \calD$ the parent by $Q^a$, and call $Q$ \emph{child} of $Q^a$. 
\end{definition}

We refer the reader to \cite{Hytonen-Kairema} for some variant where the negligible $Z_\ell$ does not appear if the subsets are non-necessarily assumed to be open. We also refer to a very recent survey by Lerner and Nazarov \cite{LN} about dyadic structures and how they are used for proving weighted estimates of singular operators.

Then we have the following result (see \cite{Hytonen-Kairema} and references therein):

\begin{theorem}\label{thm:adjacentsystems}
There exist constants $c_0,C_0,\delta$, finite constants $K=K(c_0, C_0, \delta)$ and $\rho=\rho(c_0,C_0,\delta)$ as well as a finite collection of families $\calD^b$, $b = 1, 2, \ldots ,K$, where each $\calD^b$ is a dyadic system (of parameters $c_0,C_0,\delta$) with the following extra property: for every ball $B=B(x,r)\subseteq M$, there exists $b \in \{1,\ldots,K\}$ and $Q\in \calD^b$ with
\begin{equation}\label{property:ball;included}
B\subseteq Q\text{ and } \diam (Q)\leq \rho r.
\end{equation}
We denote 
$$ \calD:= \bigcup_{b=1}^K \calD^b,$$ and call a cube $Q$ {\it dyadic cube} whenever $Q \in \calD$.
\end{theorem}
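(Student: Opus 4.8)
The final statement to prove is Theorem~\ref{thm:adjacentsystems}, the existence of a finite collection of adjacent dyadic systems. This is a well-known result due to Hyt\"onen and Kairema, and since the excerpt explicitly cites \cite{Hytonen-Kairema} ``and references therein'', my plan would be to present a structured sketch following that construction rather than a from-scratch argument.

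\medskip

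\noindent\textbf{Plan.} The plan is to first construct a single dyadic system via the Christ--David--Sawyer--Wheeden maximal-function / nested-partition method, and then to produce the finitely many \emph{adjacent} systems by translating the centers, so that every ball is contained in a comparable cube of one of them.

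\emph{Step 1: one dyadic system.} First I would recall that in a geometrically doubling metric space $(M,d)$ one can, for each scale $\ell \in \Z$, select a maximal $\delta^\ell$-separated set $\{z^\ell_\alpha\}_{\alpha \in \mathscr{A}_\ell}$ (with $\delta \in (0,1)$ small, to be fixed). Maximality gives that the balls $B(z^\ell_\alpha, \delta^\ell)$ cover $M$, while separation gives a uniform bound on overlaps by the doubling property. One then organizes these centers into a tree: each center at scale $\ell+1$ is assigned a unique ``parent'' center at scale $\ell$ within controlled distance. Following Christ's construction (as streamlined by Hyt\"onen--Kairema), one builds half-open sets $Q^\ell_\alpha$ by a Vorono\u\i-type assignment of points to the nearest center, refined so as to respect the parent structure; the standard estimates then yield the nesting property and the two-sided inclusion $B(z^\ell_\alpha, c_0 \delta^\ell) \subseteq Q^\ell_\alpha \subseteq B(z^\ell_\alpha, C_0 \delta^\ell)$ for appropriate $0 < c_0 \le C_0 < \infty$ depending only on $\delta$ and the doubling constant. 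Here the sets need to be taken open (rather than half-open) at the cost of the null sets $Z_\ell$, which is a routine modification.

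\emph{Step 2: adjacent systems and the covering property.} The key additional point is that a single dyadic system does \emph{not} have the property that every ball sits inside a comparable cube — a ball may straddle the boundary between cubes at every scale. To fix this, one runs the construction $K = K(c_0,C_0,\delta)$ times, with the scale-$\ell$ center sets ``shifted'' by points chosen from a fine net at scale $\delta^\ell$, producing systems $\calD^1,\dots,\calD^K$. The combinatorial heart of Hyt\"onen--Kairema is that $K$ can be chosen (depending only on the structural constants, not on the ball) so that: given any ball $B = B(x,r)$, pick $\ell$ with $\delta^\ell \simeq r$; then among the $K$ shifted systems at least one has a scale-$\ell$ center within distance $\lesssim \delta^\ell$ of $x$, and the corresponding cube $Q$ of that system satisfies $B \subseteq Q$ with $\diam(Q) \le \rho r$. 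This is a pigeonhole/covering argument: the shifts are arranged so that the union over $b$ of the ``centers'' at each scale forms a net fine enough that every point of $M$ is deep inside some cube of some $\calD^b$ at every scale.

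\emph{Step 3: conclusion.} Finally I would set $\calD := \bigcup_{b=1}^K \calD^b$ and record that each $\calD^b$ is a dyadic system of the common parameters $c_0,C_0,\delta$, and that the covering property \eqref{property:ball;included} holds with $\rho = \rho(c_0,C_0,\delta)$, which is exactly the statement. I expect the main obstacle to be Step 2: verifying that a \emph{fixed} finite number $K$ of shifted systems suffices to ``catch'' every ball requires the careful quantitative bookkeeping of the shift net and the doubling constant, and this is precisely the technical contribution one cites from \cite{Hytonen-Kairema}; Step 1 is by now classical and I would treat it as such, referring to \cite{Christ, SaWh, Hytonen-Kairema} for the detailed verification of the nesting and inclusion properties.
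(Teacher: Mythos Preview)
Your proposal is correct and in fact goes further than the paper: the paper does not give any proof of this theorem at all, simply stating it after the phrase ``Then we have the following result (see \cite{Hytonen-Kairema} and references therein)''. Your sketch of the Christ and Hyt\"onen--Kairema constructions is an accurate outline of the cited result, so nothing is missing.
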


For every dyadic set $Q\in\calD$, we denote  $\ell(Q):=\delta^k$ where the integer $k$ is determined by
$$ \delta^{k+1} \leq \diam(Q) <\delta^{k}.$$

This last result means that in usual situations, it is sufficient to consider a dyadic system instead of the whole collection of balls.

\begin{definition} Given $\calD^k$ one of the previous dyadic systems and a non-negative weight $h \in L^1_{\loc}$, we define its corresponding maximal operator, weighted by $h$, by
$$ \calM_h^{\calD^k} [f](x):= \sup_{x\in Q\in \calD^k} \ \left(\frac{1}{h(Q)}\int_{Q} |f|\, h d\mu \right), \qquad x \in M,$$
for every $f \in L^1_{\loc}(hd\mu)$.
\end{definition}

\begin{lemma} \label{lem:max} Uniformly in $k\in\{1,\ldots,K\}$ and in the weight $h$, the maximal operator $\calM_h^{\calD^k}$ is of weak type $(1,1)$ and strong type $(p,p)$ for the measure $hd\mu$ and every $p\in(1,\infty]$.
\end{lemma}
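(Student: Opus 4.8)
The plan is to prove the weak-type $(1,1)$ and strong-type $(p,p)$ bounds for the dyadic weighted maximal operator $\calM_h^{\calD^k}$ by the standard Calder\'on--Zygmund stopping-time argument, which works verbatim in the doubling setting because we only use the dyadic lattice structure of $\calD^k$ and additivity of the measure $h\,d\mu$, never the doubling property itself.

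First I would establish the weak type $(1,1)$ bound. Fix $f \in L^1(h\,d\mu)$ and $\lambda > 0$, and consider the set $E_\lambda = \{x : \calM_h^{\calD^k}[f](x) > \lambda\}$. For each $x \in E_\lambda$ there is a cube $Q \in \calD^k$ containing $x$ with $\frac{1}{h(Q)}\int_Q |f|\,h\,d\mu > \lambda$. Among all such cubes, by the nesting property of the dyadic system and the fact that $h(Q) \to \infty$ (or the integral is eventually $\le \lambda h(Q)$) as one passes to larger and larger ancestors — more precisely, using that $\int_Q |f|\,h\,d\mu \le \|f\|_{L^1(h\,d\mu)}$ is bounded while $h(Q)$ can be taken large — one selects the maximal such cube $Q_x$. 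The maximal cubes $\{Q_j\}$ so obtained are pairwise disjoint (two dyadic cubes are either nested or disjoint, and maximality forbids nesting), they cover $E_\lambda$ up to the null sets $Z_\ell$, and each satisfies $h(Q_j) < \frac{1}{\lambda}\int_{Q_j}|f|\,h\,d\mu$. Summing,
\[
  h(E_\lambda) \le \sum_j h(Q_j) < \frac{1}{\lambda}\sum_j \int_{Q_j}|f|\,h\,d\mu \le \frac{1}{\lambda}\|f\|_{L^1(h\,d\mu)},
\]
which is the weak type $(1,1)$ estimate with constant $1$, uniformly in $k$ and in $h$. The $L^\infty$ bound $\|\calM_h^{\calD^k}f\|_\infty \le \|f\|_\infty$ is immediate from the definition since each average is bounded by the essential supremum. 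The strong type $(p,p)$ for $p \in (1,\infty)$ then follows by the Marcinkiewicz interpolation theorem between the weak $(1,1)$ and the $(\infty,\infty)$ bounds, applied to the measure $h\,d\mu$; the interpolation constant depends only on $p$, so the bound is again uniform in $k$ and $h$.

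The only mild subtlety — and the main point to check — is the existence of maximal cubes in the selection step. If $\|f\|_{L^1(h\,d\mu)} = \infty$ there is nothing to prove; if it is finite, then for any $x$ with $\calM_h^{\calD^k}[f](x) > \lambda$ we may restrict attention to cubes $Q \ni x$ with $h(Q) < \frac{1}{\lambda}\|f\|_{L^1(h\,d\mu)}$, and since the ancestors of any fixed cube have strictly increasing $h$-measure tending to infinity (as they exhaust $M$ and $h\,d\mu$ is, say, locally finite but with $h(M)$ possibly infinite — and if $h(M)<\infty$ one argues directly that the top cube fails the inequality for $\lambda$ large, while for small $\lambda$ the estimate $h(E_\lambda)\le h(M)\le \lambda^{-1}\|f\|_{L^1(h\,d\mu)}$ may fail, so one instead uses that the chain of ancestors through $x$ is a finite or infinite increasing sequence of cubes and picks the largest one still satisfying the defining inequality), there are only finitely many candidate cubes through $x$ with bounded $h$-measure at each scale, so a maximal one exists. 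This is entirely routine; no new ideas beyond the classical dyadic maximal function argument are needed, which is why the paper states the lemma without proof or with only a reference.
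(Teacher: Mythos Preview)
Your proof is correct and follows essentially the same route as the paper's: both establish the trivial $L^\infty$ bound, prove the weak-type $(1,1)$ inequality by selecting maximal dyadic cubes $P$ with $\frac{1}{h(P)}\int_P|f|\,h\,d\mu>\lambda$, using their disjointness to sum, and then invoke Marcinkiewicz interpolation. The paper's argument is terser and simply asserts the existence of the maximal covering from ``the properties of the dyadic system'' without the extended (and somewhat tangled) discussion you give in your final paragraph; that paragraph could be safely dropped or replaced by the one-line observation that any cube $Q$ with average exceeding $\lambda$ satisfies $h(Q)<\lambda^{-1}\|f\|_{L^1(h\,d\mu)}$, so the chain of ancestors through $x$ satisfying the inequality is bounded.
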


We refer the reader to \cite[Theorem 15.1]{LN} for a detailed proof of such results and more details. For completeness, we quickly give the proof.

\begin{proof} Since $\calM_h^{\mcD_k}$ is $L^\infty$-bounded (and so $L^\infty(h d\mu)$-bounded), it suffices by interpolation to check its weak $L^1(h d\mu)$-boundedness.

Fix a function $f\in L^1(h d\mu)$. For every $\lambda>0$, we consider the set
$$ \Omega_\lambda:=\{x\in M, \, \calM_h^{\mcD^k}[f](x) >\lambda\}.$$
Due to the properties of the dyadic system, there exists a collection $\mcQ:=(P)_{P\in \mcQ} \subset \mcD^k$ of dyadic sets such that $\Omega_\lambda = \bigcup_{P\in \mcQ} P$ (up to a subset of measure zero) and such that each $P\in \mcQ$ is maximal in $\Omega_\lambda$ and for every $P\in \mcQ$
$$ \frac{1}{h(P)} \int_{P} |f|\, h d\mu >\lambda.$$
Due to the maximality, the dyadic sets $P\in \mcQ$ are pairwise disjoint and so we conclude that
\begin{align*} 
h(\Omega_\lambda) & = \sum_{P\in \mcQ} h(P) \leq \lambda^{-1} \sum_{P\in \mcQ} \int_{P} |f| \, h d\mu \\
& \leq \lambda^{-1} \|f\|_{L^1(h d\mu)},
\end{align*}
which concludes to the weak $L^1(h d\mu)$-boundedness, uniformly with respect to $h$.
\end{proof}

We will also need the weak-type of a slight modification of the previous maximal function.

\begin{lemma} \label{lem:max-bis}  Fix $k\in\{1,..,K\}$ and consider the maximal function
$$ \calM^*[f](x) :=\sup_{x\in Q\in \calD^k} \ \inf_{y\in Q} \calM[f](y), \qquad x \in M,$$
for every $f \in L^1_{\loc}(hd\mu)$. It follows that $\calM^*[f]=\calM[f]$ almost everywhere.
Consequently, the maximal operator $\calM^*$ is of weak type $(1,1)$ and strong type $(p,p)$ for every $p\in(1,\infty]$.
\end{lemma}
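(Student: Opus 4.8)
The plan is to establish the pointwise identity $\calM^*[f] = \calM[f]$ almost everywhere; once this is known, the weak-type $(1,1)$ and strong-type $(p,p)$ bounds for $\calM^*$ follow immediately from the classical corresponding bounds for the uncentered Hardy--Littlewood maximal operator $\calM$ on the doubling space $(M,d,\mu)$.

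One inequality is trivial. Whenever $x \in Q \in \calD^k$ we have $\inf_{y\in Q}\calM[f](y) \le \calM[f](x)$, and taking the supremum over all such cubes $Q$ gives $\calM^*[f](x) \le \calM[f](x)$ for every $x\in M$.

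For the reverse inequality, first note that $\calM[f]$ is lower semicontinuous: if $\calM[f](x)>\lambda$, there is an open ball $B\ni x$ with $\aver{B}|f|\,d\mu >\lambda$, hence $\calM[f]>\lambda$ on all of $B$, so $\{\calM[f]>\lambda\}$ is open. Next fix $x\notin \bigcup_{\ell\in\Z} Z_\ell$, where $Z_\ell$ is the $\mu$-null exceptional set attached to the $\ell$-th generation of the dyadic system $\calD^k$; this discards only a null set. For each $\ell$ there is a unique cube $Q^{(\ell)}\in\calD^k$ of generation $\ell$ containing $x$, and by the nesting property these cubes are decreasing in $\ell$, with $\diam Q^{(\ell)} \le 2C_0\delta^\ell \to 0$ by \eqref{eq:contain}. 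Given $\eps>0$, lower semicontinuity yields $r>0$ with $\calM[f] > \calM[f](x)-\eps$ on $B(x,r)$ (respectively $\calM[f]>1/\eps$ on $B(x,r)$ if $\calM[f](x)=\infty$); for $\ell$ large enough, $\diam Q^{(\ell)}<r$ together with $x\in Q^{(\ell)}$ forces $Q^{(\ell)}\subseteq B(x,r)$, so $\inf_{y\in Q^{(\ell)}}\calM[f](y) \ge \calM[f](x)-\eps$. Hence $\calM^*[f](x) \ge \calM[f](x)-\eps$, and letting $\eps\to 0$ gives $\calM^*[f](x)\ge\calM[f](x)$. Combining the two inequalities proves $\calM^*[f]=\calM[f]$ off $\bigcup_\ell Z_\ell$, i.e. almost everywhere.

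The only point requiring a little care --- and the mild obstacle in this otherwise soft argument --- is the book-keeping at the exceptional sets: one must invoke that $\bigcup_{\ell}Z_\ell$ is still $\mu$-null, so that for a.e.\ $x$ there genuinely is a decreasing sequence of dyadic cubes of $\calD^k$ shrinking to $x$. With that in hand, the transfer of boundedness from $\calM$ to $\calM^*$ is automatic.
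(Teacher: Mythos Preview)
Your proof is correct and follows essentially the same skeleton as the paper's: both observe that the supremum defining $\calM^*[f](x)$ is attained in the limit as the dyadic cubes shrink to $x$, and both then identify that limit with $\calM[f](x)$ almost everywhere. The paper is terser, simply noting the monotonicity of $\inf_{y\in Q}\calM[f](y)$ in $Q$ and invoking the ``Lebesgue differentiation lemma'' for the last step; your justification via lower semicontinuity of $\calM[f]$ is more precise (and arguably the right way to phrase it, since Lebesgue differentiation concerns averages rather than infima), and your explicit bookkeeping of the null sets $Z_\ell$ is a nice touch the paper leaves implicit.
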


\begin{proof} Indeed, since the quantity 
$\inf_{y\in Q} \calM{f}(y)$ is decreasing with respect to $Q$, it follows that 
$$ \calM^*[f](x) = \lim_{\genfrac{}{}{0pt}{}{x\in Q}{\diam(Q) \to 0}} \inf_{y\in Q} \calM[f](y) = \calM[f](x),$$
where we used the Lebesgue differentiation lemma which implies the last equality for almost every $x\in M$.
\end{proof}

\subsection{Upper estimates of the maximal operator with sparse operators} 
From the previous subsection we know that we have several dyadic grids $\mcD^b$ for $b\in \{1,\ldots,K\}$.
In the sequel, we denote $\mcD:=\bigcup_{b=1}^K \mcD^b$ and call {\it dyadic set} any element of $\mcD$.

\begin{definition}[Sparse collection] A collection of dyadic sets $\mcS:=(P)_{P\in \mcS} \subset \mcD$ is said to be {\it sparse} if for each $P\in \mcS$ one has
\begin{equation}
\sum_{Q\in \ch_{\mcS}(P)} \mu(Q) \leq \frac{1}{2} \mu(P), \label{eq:sparse} \end{equation}
where $\ch_{\mcS}(P)$ is the collection of $\mcS$-children of $P$: the maximal elements of $\mcS$ that are strictly contained in $P$. 
\end{definition}

For a dyadic cube $Q\in \calD$, we denote by $5Q$ its neighbourhood
$$ 5Q:=\{x\in M,\ d(x,Q)\leq 4 \ell(Q)\}.$$

\begin{theorem} \label{thm:sparse} Consider an exponent $p \in (p_0,q_0)$. 
There exists a constant $C>0$ such that for all $f \in L^p$ and $g\in L^{p'}$  both supported in  $5Q_0$ for some $Q_0 \in \calD$, there exists a sparse collection $\calS\subset \calD$ (depending on $f,g$) with
\begin{align*}
	\left| \int_{Q_0} Tf \cdot g\, d\mu \right| 
	\leq C \sum_{P \in \calS} \mu(P) \left(\aver{5P} |f|^{p_0}\, d\mu\right)^{1/p_0} \left(\aver{5P} |g|^{q_0'}\, d\mu\right)^{1/q_0'}.
\end{align*}
\end{theorem}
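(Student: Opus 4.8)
The strategy is the now-standard recursive stopping-time construction that produces a sparse family, but carried out using the maximal operator $T^{\#}$ (which by Corollary in Section \ref{sec:maxop} dominates $T$ pointwise on $L^2$, and by Proposition \ref{prop:op-max} is of weak type $(p_0,p_0)$) together with the Cotlar-type Assumption \ref{assum-T}(c) to control local averages of $Tf$. First I would fix the dyadic cube $Q_0 \in \calD$ in which $f,g$ are supported (after dilating by $5$), and set up a single recursion step: given a dyadic cube $P$ (starting with $P=Q_0$), I want to say that the portion of the bilinear form $\int_P Tf\cdot g$ coming from ``the scale of $P$'' is controlled by $\mu(P)\,\langle|f|^{p_0}\rangle_{5P}^{1/p_0}\langle|g|^{q_0'}\rangle_{5P}^{1/q_0'}$, while the remaining portion is localized into a collection of strictly smaller dyadic cubes whose union has measure at most $\tfrac12\mu(P)$. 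Iterating this and summing the geometric series over all generations yields the sparse collection $\calS$ and the claimed bound.

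\textbf{The stopping-time step.} Concretely, inside $P$ I would use the off-diagonal decomposition from Section \ref{sec:maxop}: write $Tf = T P^{(N)}_{\ell(P)^2} f + T(I - P^{(N)}_{\ell(P)^2})f$. The ``high-frequency'' piece $T(I-P^{(N)}_{\ell(P)^2})f$ is, by Corollary \ref{cor} and Lemma \ref{lemma}, essentially a superposition of operators with $L^{p_0}$-$L^{q_0}$ off-diagonal decay at scale $\ell(P)$, so tested against $g$ and using H\"older it contributes the good term $\mu(P)\langle|f|^{p_0}\rangle_{5P}^{1/p_0}\langle|g|^{q_0'}\rangle_{5P}^{1/q_0'}$ plus tails that are absorbed by the geometric decay (this is where the neighbourhood $5P$, rather than $P$ itself, is needed, to accommodate the off-diagonal tails). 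The ``low-frequency'' piece is handled via $T^{\#}$ and the Cotlar inequality \eqref{eq:Cotlar}: one forms the exceptional set
\begin{equation*}
E_P := \Big\{ x \in P \;:\; \calM_{p_1}^{\calD}\!\big(T(\Eins_{5P}f)\big)(x) + \calM_{p_0}^{\calD}\!\big(\Eins_{5P}f\big)(x) > \Lambda\, \langle |f|^{p_0}\rangle_{5P}^{1/p_0} \Big\} \;\cup\; \{ x \in P : T^{\#}(\Eins_{5P}f)(x) > \Lambda\,\langle|f|^{p_0}\rangle_{5P}^{1/p_0}\},
\end{equation*}
and, choosing the absolute constant $\Lambda$ large using the weak-type $(p_0,p_0)$ bounds of $\calM_{p_0}^{\calD}$, $\calM_{p_1}^{\calD}$ (Lemma \ref{lem:max}) and of $T^{\#}$ (Proposition \ref{prop:op-max}), one gets $\mu(E_P)\le \tfrac12\mu(P)$. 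The maximal dyadic subcubes of $E_P$ form $\ch_{\calS}(P)$; on $P\setminus\bigcup\ch_{\calS}(P)$ all the relevant averages of $Tf$ and $f$ are controlled by $\Lambda\langle|f|^{p_0}\rangle_{5P}^{1/p_0}$, which is exactly what is needed to bound $\int_{P\setminus \bigcup \ch_{\calS}(P)} Tf\cdot g$ by $C\mu(P)\langle|f|^{p_0}\rangle_{5P}^{1/p_0}\langle|g|^{q_0'}\rangle_{5P}^{1/q_0'}$ after another H\"older step in the $g$-variable (using that $\langle|g|\rangle$ on the relevant cubes is controlled by $\langle|g|^{q_0'}\rangle_{5P}^{1/q_0'}$ via the dyadic maximal function of $g$, up to enlarging $E_P$ by the analogous stopping condition for $g$).

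\textbf{Assembling and the main difficulty.} After the single step, one applies it recursively: the children $\ch_{\calS}(P)$ become the new cubes, and $\calS := \{Q_0\} \cup \bigcup_{k\ge 1}\{k\text{-th generation cubes}\}$ is sparse by construction since $\sum_{Q\in\ch_{\calS}(P)}\mu(Q)\le\mu(E_P)\le\tfrac12\mu(P)$. Summing the good contributions over all $P\in\calS$ gives the desired inequality, because each point of $Q_0$ lies in boundedly many (in fact, by sparseness, the generations telescope) cubes of each generation and the per-cube estimate is already in the right form. The main obstacle, and the step that needs the most care, is the high-frequency term: unlike the Calder\'on--Zygmund case there is no pointwise kernel, so one cannot simply split $Tf$ into a ``local'' and a ``far'' part by restricting the kernel. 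Instead one must genuinely use the frequency-localized off-diagonal estimates of Corollary \ref{cor} and Lemma \ref{lemma}, integrate the resulting geometric series in both the scale parameter and the annular index $j$, and verify that the tails landing outside $5P$ can be redistributed to ancestor cubes without destroying sparseness — this bookkeeping, together with the fact that $q_0$ may be finite (so one only has $L^{q_0}$ rather than $L^\infty$ control of $Tf$ on cubes, forcing the appearance of $\langle|g|^{q_0'}\rangle^{1/q_0'}$ and hence the restriction $p<q_0$), is the technical heart of the argument.
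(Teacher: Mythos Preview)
Your overall plan—a recursive stopping-time argument driven by the weak-$(p_0,p_0)$ boundedness of $T^{\#}$ together with the off-diagonal estimates from Section \ref{sec:maxop}—matches the paper's strategy. But the way you organize the frequency splitting contains a genuine error that prevents the recursion from closing.

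You split $Tf = TP^{(N)}_{\ell(P)^2}f + T(I-P^{(N)}_{\ell(P)^2})f$ at the \emph{parent} cube $P$ and assert that the high-frequency piece $T(I-P^{(N)}_{\ell(P)^2})f=T\int_0^{\ell(P)^2}Q_t^{(N)}f\,\frac{dt}{t}$ ``contributes the good term'' via Corollary \ref{cor}. This is not correct: Corollary \ref{cor} gives only \emph{strictly} off-diagonal decay ($d(B_1,B_2)>4r$), and on the diagonal the operators $TQ_t^{(N)}$ have $L^{p_0}\to L^{q_0}$ norm scaling like $|B_{\sqrt t}|^{1/q_0-1/p_0}$, which blows up as $t\to 0$. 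When $f$ is supported in $5P$ and you test on $P$, the high-frequency piece is precisely the singular part of $T$—it cannot be absorbed into a single good term and must instead be the object of recursion.

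The paper's organization is the reverse of yours. No split is made at the parent level; on $Q_0\setminus E$ one simply uses $|Tf|\le T^{\#}_{Q_0}f$ and the stopping bound. The frequency split is performed at each \emph{child} $B_j$, simultaneously with a spatial localization: the recursion term is
\[
T_{B_j}f := T\!\int_0^{\ell(B_j)^2} Q_t^{(N)}\bigl(f\Eins_{5B_j}\bigr)\,\frac{dt}{t},
\]
and the remainder $Tf-T_{B_j}f$ on $B_j$ decomposes (see \eqref{eq:split2}) into a low-frequency piece $T\int_{\ell(B_j)^2}^\infty Q_t^{(N)}f\,\frac{dt}{t}$, controlled by $T^{\#}$ at the parent $B_j^a$ via \eqref{eq:max}, and a high-frequency \emph{off-diagonal} piece $T\int_0^{\ell(B_j)^2}Q_t^{(N)}(f\Eins_{(5B_j)^c})\,\frac{dt}{t}$, controlled by Assumption \ref{assum-T}(b) and the stopping bound on $\calM_{p_0}f$. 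This combined frequency-and-space truncation is the key device: it returns an expression of the same shape on each $B_j$ (with $f$ replaced by $f\Eins_{5B_j}$), so the recursion telescopes. Your scheme, as written, does not produce a self-similar leftover on the children, and the ``redistribute tails to ancestor cubes'' idea is neither needed nor workable here.
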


\begin{proof}
Let $p \in (p_0,q_0)$. Suppose $f \in L^p$ and $g\in L^{p'}$, supported in $5Q_0$ for a dyadic set $Q_0 \in \calD$. 
Fix the parameter $b\in \{1,...,K\}$ such that $Q_0\in \calD^b$.
For some large enough constant $\eta$ (which will be fixed  later), define the subset
$$
	E=\left\{x\in Q_0 \,:\, \max \Big(\calM^*_{Q_0,p_0}f(x),T^{\#}_{Q_0}f(x) \Big) > \eta \left(\aver{5 Q_0}|f|^{p_0}\, d\mu \right)^{1/p_0} \right\},
$$
where both $\calM^*_{Q_0,p_0}$ and $T^{\#}_{Q_0}$ are defined relative to the initial subset $Q_0\in \calD^b$ as follows: for every $x\in Q_0$
$$ \calM^*_{Q_0,p_0}[f](x):= \sup_{\genfrac{}{}{0pt}{}{x\in Q\subset Q_0}{Q\in \calD^b}}  \ \inf_{y\in Q} \calM_{p_0}[f](y)$$
and
$$
	T^{\#}_{Q_0}f(x) =  \sup_{\genfrac{}{}{0pt}{}{x\in Q\subset Q_0}{Q\in \calD^b}}	\left(\aver{Q} \Big| T \left[ \int_{\ell(Q)^2}^\infty  Q_t^{(N)}(f) \, \frac{dt}{t} \right] \Big|^{q_0}\,d\mu\right)^{1/q_0}. 
$$
We extend both $\calM^*_{Q_0,p_0}$ and $T^{\#}_{q_0}$ by $0$  outside $Q_0$.

Due to the property of dyadic subsets, we know that every $Q\in \calD^b$ is contained in a ball with radius equivalent to $\ell(Q)$. So up to some implicit constants, $\calM^*_{Q_0,p_0}$ is bounded by the Hardy-Littlewood maximal function $\calM_{p_0}$ (see Lemma \ref{lem:max-bis}) and $T^{\#}_{Q_0}$ is controlled by the maximal operator $T^{\#}$. So Proposition \ref{prop:op-max} yields that both $\calM^*_{Q_0,p_0}$ and $T^{\#}_{Q_0}$ are of weak type $(p_0,p_0)$.

Then it follows that $\mu(E) \lesssim \frac{1}{\eta} \mu(Q_0)$. So if $\eta$ is chosen large enough, then we know that $E$ is an open proper subset of $Q_0$.  
In the sequel, all the implicit constants will only depend on the ambient space. For convenience, we only emphasise the dependence relatively to $\eta$, which will be useful later to show how $\eta$ can be fixed.

Consider a maximal dyadic covering of $E$, which is a collection of dyadic subsets $(B_j)_j\subset \calD^b$ such that
\begin{itemize}
\item The collection covers $E$: $E= \bigsqcup_j B_j$, up to a set of null measure with disjointness of the dyadic cubes;
\item The dyadic cubes are maximal, in the sense that for every $j$, $B_j^{a} \cap E^c \neq \emptyset$, where we recall that $B_j^a$ is the parent of $B_j$.
\end{itemize}

Since $\mu(B_j)\leq \mu(E)\lesssim \eta^{-1} \mu(Q_0)$, if $\eta$ is chosen large enough and using the doubling property of the measure $\mu$, we deduce that we also have
$$ \mu(B_j^a) \leq \mu(Q_0).$$
Due to the properties of the dyadic system, we then deduce that $B_j^a$ is included in $Q_0$, and so the maximality of $B_j$ yields
\begin{equation} 
\max\left\{ \inf_{y\in B_j^a} \calM_{p_0}[f](y) ,\  \left(\aver{B_j^a} \Big| T \left[ \int_{\ell(B_j^a)^2}^\infty  Q_t^{(N)}(f) \, \frac{dt}{t} \right] \Big|^{q_0}\,d\mu\right)^{1/q_0} \right\} \leq  \eta \left(\aver{5 Q_0}|f|^{p_0}\, d\mu \right)^{1/p_0}.
\label{eq:max}
\end{equation}

We first initialize the collection $\calS:=\{Q_0\}$ and we are going to  build it in a recursive way. For $B \in \calD$, define the operator $T_{B}$ by
\begin{align*}
	T_{B} f:= T \left[ \int_0^{\ell(B)^2}  Q_t^{(N)}( f \Eins_{5B} ) \, \frac{dt}{t} \right].
\end{align*}

\medskip
\noindent
{\bf Step 1:} In this step, we first aim to show that for some numerical constant $C_0$, 
\begin{align}
	\left| \int_{Q_0} Tf \cdot g\, d\mu \right| 
	\leq C_0\eta  |Q_0| \left(\aver{Q_0}|f|^{p_0}\, d\mu \right)^{1/p_0}  \left(\aver{Q_0}|g|^{q_0'}\, d\mu\right)^{1/q_0'}+ \sum_{j} \left|\int_{B_j} T_{B_j}f \cdot g \, d\mu \right|. \label{eq:amon11}
\end{align}
Aiming that, write
\begin{align*}
	\left| \int_{Q_0} Tf \cdot g\, d\mu \right| \leq \left|\int_{Q_0\setminus E} Tf \cdot g\, d\mu \right| + \left|\int_{E} Tf \cdot g\, d\mu \right|.
\end{align*}
For the first part, notice that $|Tf(x)| \leq T^{\#}_{Q_0}f(x)\leq \eta \left(\aver{5Q_0}|f|^{p_0}\, d\mu\right)^{1/p_0}$ for a.e. $x \in Q_0\setminus E$ by definition of $E$. Hence
\begin{align*}	
	 \left|\int_{Q_0\setminus E} Tf \cdot g \, d\mu \right| 
	 \leq \eta \mu(Q_0) \left(\aver{5Q_0}|f|^{p_0}\, d\mu\right)^{1/p_0} 
	 \left(\aver{Q_0}|g|^{q_0'}\, d\mu\right)^{1/q_0'}.
\end{align*}
For the part on $E$, we use the covering to have
\begin{align*}	
	\left|\int_{E} T f \cdot g \, d\mu \right|
	 &\leq \sum_{j} \left|\int_{B_j} T_{B_j} f \cdot g\, d\mu \right|
	 + \left|\sum_{j}  \int_{B_j} (T-T_{B_j}) f \cdot g \, d\mu \right|\\
	 & \leq  \sum_{j} \left|\int_{B_j} T_{B_j} f \cdot g \, d\mu \right|
	 + \sum_{j}  \mu(B_j) \left(\aver{B_j}|(T-T_{B_j})f|^{q_0}\, d\mu\right)^{1/q_0} \left(\aver{B_j}|g|^{q_0'}\, d\mu\right)^{1/q_0'}.
\end{align*}
The first sum enters into the recursion and is acceptable in view of \eqref{eq:amon11}. For the second sum, we have
\begin{align}
	\left|(T-T_{B_j})f\right| 
	\leq  \left|T\left[\int_{\ell(B_j)^2}^\infty Q_t^{(N)}(f)  \,\frac{dt}{t} \right]\right|
	+ \left|T \left[\int_0^{\ell(B_j)^2} Q_t^{(N)} (\Eins_{(5B_j)^c} f) \,\frac{dt}{t} \right]\right|. \label{eq:split2}
\end{align}
Using the doubling property, we can estimate the first term against the maximal operator, and get
\begin{align*} 
	\left(\aver{B_j}\Big| T \left[\int_{\ell(B_j)^2}^\infty Q_t^{(N)} f \,\frac{dt}{t}\right] \Big|^{q_0}\, d\mu \right)^{1/q_0} 
	&\lesssim  \left(\aver{B_j^a}\Big| T \left[\int_{\ell(B_j)^2}^\infty Q_t^{(N)} f \,\frac{dt}{t} \right] \Big|^{q_0}\right)^{1/q_0}\\	
	& \lesssim \inf_{z \in B_j^a} T^{\#}f(z) +  \left(\aver{B_j^a}\Big| T\left[\int_{\ell(B_j)^2}^{\ell(B_j^a)^2} Q_t^{(N)}f \,\frac{dt}{t}\right] \Big|^{q_0}\right)^{1/q_0}.
\end{align*}	
By the maximality of the dyadic cubes $B_j$, we know that $B_j^a$ intersects $E^c$ hence from \eqref{eq:max}, we have
$$ \inf_{z \in B_j^a} T^{\#}f(z) \leq \eta \left(\aver{5Q_0}|f|^{p_0}\, d\mu\right)^{1/p_0}.$$
Moreover, we also know that for every dyadic set $B_j$ we have
$$ \inf_{y\in B_j^a} \calM_{p_0}[f](y) \leq \eta \left(\aver{5Q_0}|f|^{p_0}\, d\mu\right)^{1/p_0},$$
which yields in particular that
\begin{align*}
\left(\aver{B_j^a}\Big| T \left[\int_{\ell(B_j)^2}^{\ell(B_j^a)^2} Q_t^{(N)}f \,\frac{dt}{t} \right] \Big|^{q_0}\right)^{1/q_0} & \lesssim \int_{\ell(B_j)^2}^{\ell(B_j^a)^2} \left(\aver{B_j^a} \Big| T Q_t^{(N)}f \Big|^{q_0} \, d\mu \right)^{1/q_0} \, \frac{dt}{t} \\
& \lesssim \eta \left(\aver{5Q_0}|f|^{p_0}\, d\mu\right)^{1/p_0}.
\end{align*}
We do not detail this last inequality, since it is a simpler and a particular case of the next one.	
	
For the second term in \eqref{eq:split2}, we use $L^{p_0}$-$L^{q_0}$ off-diagonal estimates for $T_t=TQ_t^{(N)}$ from Assumption \ref{assum-T} (b).
 We have that
$$ (5B_j)^c \subset \bigcup_{k=2}^\infty S_k(B_j),$$
and can therefore decompose
\begin{align*}
 \left(\aver{B_j}\Big| T \left[\int_0^{\ell(B_j)^2} Q_t^{(N)} (f\Eins_{(5B_j)^c})\,\frac{dt}{t} \right] \Big|^{q_0}\, d\mu\right)^{1/q_0} & \leq \int_0^{\ell(B_j)^2} \left(\aver{B_j}\Big| T_t (f\Eins_{(5B_j)^c})  \Big|^{q_0}\, d\mu\right)^{1/q_0} \,\frac{dt}{t} \\
 & \leq \sum_{k\geq 2} \int_0^{\ell(B_j)^2} \left(\aver{B_j}\Big| T_t (f\Eins_{S_k(B_j)}) \Big|^{q_0}\, d\mu\right)^{1/q_0} \,\frac{dt}{t}. 
\end{align*}
For fixed $t\in (0,\ell(B_j)^2)$ we know that $T_t$ satisfies $L^{p_0}$-$L^{q_0}$ off-diagonal estimates at the scale $\sqrt{t}$. We then cover $S_k(B_j)$ by 
balls of radius $\sqrt{t}$, with a finite overlap property (by the doubling property of the measure).
We then deduce that these balls $R$ satisfy that
$$ d(R,B_j) \geq \ell(B_j) \qquad \textrm{and} \quad d(R,B_j) \simeq d(S_k(B_j), B_j) \simeq 2^k \ell(B_j).$$ 
Moreover, the number of these balls needed to cover $S_k(B_j)$ is controlled by
\begin{equation} \#\{R\} \lesssim \left(\frac{2^k \ell(B_j)}{\sqrt{t}}\right)^\nu. \label{eq:nombre} \end{equation}
By summing over such a covering, we get
\begin{align*}
 \left(\aver{B_j}\Big|T_t (f\Eins_{S_k(B_j)}) \Big|^{q_0}\, d\mu\right)^{1/q_0} 
 & \lesssim \sum_{R} \left(1+\frac{d(R,B_j)^2}{t}\right)^{-\frac{\nu+1}{2}}  \left(\aver{R}| f|^{p_0}\, d\mu\right)^{1/p_0} \\
 & \lesssim \left(1+\frac{4^k \ell(B_j)^2}{t}\right)^{-\frac{\nu+1}{2}} \left(\frac{2^k \ell(B_j)}{\sqrt{t}}\right)^{\nu/p_0} |2^kB_j|^{-1/p_0}  \sum_{R}  \left(\int_{R}| f|^{p_0}\, d\mu\right)^{1/p_0}.
\end{align*}
By H\"older's inequality with the bounded overlap property of the collection $\{R\}$ with \eqref{eq:nombre}, we then have
$$ \sum_{R}  \left(\int_{R}| f|^{p_0}\, d\mu\right)^{1/p_0} \lesssim \left( \int_{S_k(B_j)} |f|^{p_0} \, d\mu \right)^{1/p_0} \left(\frac{2^k \ell(B_j)}{\sqrt{t}}\right)^{\nu/p_0'},$$
hence
\begin{align*}
  \left(\aver{B_j}\Big|T_t (f\Eins_{S_k(B_j)}) \Big|^{q_0}\, d\mu\right)^{1/q_0} &  \lesssim \left(1+\frac{4^k \ell(B_j)^2}{t}\right)^{-\frac{\nu+1}{2}} \left(\frac{2^k \ell(B_j)}{\sqrt{t}}\right)^{\nu}  \left(\aver{S_k(B_j)} |f|^{p_0} \, d\mu \right)^{1/p_0} \\
 & \lesssim \left(\frac{\sqrt{t}}{2^k \ell(B_j)}\right)  \left(\aver{S_k(B_j)} |f|^{p_0} \, d\mu \right)^{1/p_0}.
\end{align*}
We therefore get
\begin{align}  \nonumber 
	&\left(\aver{B_j}\Big| T \left[\int_0^{\ell(B_j)^2} Q_t^{(N)} (f\Eins_{(5B_j)^c})\,\frac{dt}{t} \right] \Big|^{q_0}\, d\mu\right)^{1/q_0}\\ \nonumber 
	&\qquad \lesssim \sum_{k=2}^\infty   \left(\aver{S_k(B_j)} |f|^{p_0} \, d\mu \right)^{1/p_0} \int_0^{\ell(B_j)^2}  \left(\frac{\sqrt{t}}{2^k \ell(B_j)}\right) \,\frac{dt}{t} \\ \nonumber 
	& \qquad \lesssim  \sum_{k=2}^\infty 2^{-k}  \left(\aver{S_k(B_j)} |f|^{p_0} \, d\mu \right)^{1/p_0}
	 \lesssim \sup_{k \geq 2} \left(\aver{2^k B_j}|f|^{p_0}\, d\mu \right)^{1/p_0}\\
	 & \label{eq:eq-OD} \qquad \lesssim \inf_{z \in B_j^a} \calM_{p_0}f(z)
	 \lesssim \eta \left(\aver{5Q_0}|f|^{p_0}\right)^{1/p_0},
\end{align}
where we used \eqref{eq:max}.

On the other hand, 
\begin{align*}
	\left(\aver{B_j}|g|^{q_0'}\, d\mu \right)^{1/q_0'} \leq \inf_{z \in B_j} \calM_{q_0'} g(z),
\end{align*}
and, using $\bigcup_{j} B_j=E$, Kolmogorov's inequality, the fact that $\mu(E) \lesssim \mu(Q_0)$ (since $\eta$ will be chosen larger than $1$) and $\supp g \subseteq 5Q_0$,
\begin{align*}
	\sum_{j} \mu(B_j) \inf_{z \in B_j} \calM_{q_0'}[g](z)
	&  \leq
	\int_{E} \calM_{q_0'}[g](z)\,dz 
	\lesssim \mu(E)^{1-1/q_0'} \||g|^{q_0'}\|_1^{1/q_0'}	\\
	& \lesssim  \mu(Q_0) \left( \aver{5Q_0} |g|^{q_0'} \, d\mu \right)^{1/q_0'}.
\end{align*}

Therefore, putting all the estimates together, we have shown that 
\begin{align*}
	&\sum_{j}  \mu(B_j) \left(\aver{B_j}|(T-T_{B_j})f|^{q_0}\, d\mu\right)^{1/q_0} \left(\aver{B_j}|g|^{q_0'}\, d\mu\right)^{1/q_0'} \\
	& \qquad \qquad  \lesssim \eta \mu(Q_0)  \left(\aver{5Q_0}|f|^{p_0}\right)^{1/p_0} \left(\aver{5Q_0}|g|^{q_0'}\right)^{1/q_0'},
\end{align*}
where the implicit constant only depends on the ambient space through previous numerical constants. This concludes the proof of \eqref{eq:amon11}.

\medskip
\noindent
{\bf Step 2:} Recursion and conclusion.

Starting from the initial dyadic cube $Q_0$, we have built a collection of dyadic cubes $(Q_1^j)_j$ such that
\begin{align*}
	\left| \int_{Q_0} Tf \cdot g\, d\mu \right| 
	\leq C_0\eta  \mu(Q_0) \left(\aver{5Q_0}|f|^{p_0}\, d\mu \right)^{1/p_0}  \left(\aver{5Q_0}|g|^{q_0'}\, d\mu\right)^{1/q_0'}+ \sum_{j} \left|\int_{Q_1^j} T_{Q_1^j}f^j \cdot g^j\, d\mu \right|,
\end{align*}
where $f^j$ and $g^j$ are both supported in $5Q_1^j$ and are respectively pointwise bounded by $f$ and $g$. 
Moreover, the following properties hold:
\begin{enumerate}
\item Small measure: for some numerical constant $\tilde K$
$$ \sum_j \mu(Q_1^j) \leq \frac{\tilde K}{\eta} \mu(Q_0)$$
\item disjointness and covering: $(Q_1^j)_j$ are pairwise disjoint and included in $Q_0$.
\end{enumerate}
We then add all these cubes to the collection $\calS$, $\calS=\calS \cup \bigcup_j\{Q_1^j\}$.
And we iterate the procedure. For every cube $Q_1^j$, there exists 
a collection of dyadic cubes $(Q_2^{j,k})_k$ such that
\begin{align*}
	\left| \int_{Q_1^j} Tf^j \cdot g^j\, d\mu \right| 
	\leq C_0\eta  \mu(Q_1^j) \left(\aver{5Q_1^j}|f|^{p_0}\, d\mu \right)^{1/p_0}  \left(\aver{5Q_1^j}|g|^{q_0'}\, d\mu\right)^{1/q_0'}+ \sum_{k} \left|\int_{Q_2^{j,k}} T_{Q_2^{j,k}}f^{j,k} \cdot g^{j,k}\, d\mu \right|,
\end{align*}
with the properties that $f^{j,k}$ and $g^{j,k}$ are pointwisely bounded by $f$ and $g$ with also
\begin{enumerate}
\item Small measure
$$ \sum_k \mu(Q_2^{j,k}) \leq \frac{\tilde K}{\eta} \mu(Q_1^j)$$
\item disjointness and covering: $(Q_2^{j,k})_k$ are pairwise disjoint and included in $Q_1^j$.
\end{enumerate}
We then add all these cubes to the collection $\calS$, $\calS=\calS \cup \bigcup_{j,k} \{Q_2^{j,k}\}$.
We iterate this reasoning which allows us to build the collection $\calS$ with the property that
\begin{align*}
	\left| \int_{Q_0} Tf \cdot g\, d\mu \right| 
	\leq C_0\eta \sum_{Q \in \calS} \mu(Q) \left(\aver{5Q}|f|^{p_0}\, d\mu \right)^{1/p_0}  \left(\aver{5Q}|g|^{q_0'}\, d\mu\right)^{1/q_0'}.
\end{align*}
Indeed, it is easy to check that the remainder term at the $i^{th}$-step is an integral over a subset of measure which tends to $0$ as $i$ goes to $\infty$. So for fixed $f\in L^p$ and $g \in L^{p'}$ with $p'<\infty$, the remainder term also tends to $0$.

It remains us to check that this collection $\calS$ is sparse.

So consider $Q\in \calS$. By the disjointness property of the selected dyadic cubes, it is clear that any child $\bar Q \in ch_{\calS}(Q)$ has been selected (strictly) after $Q$ and in the collection $\calS_Q$ {\it generated} by $Q$.
Using the smallness property of the measure in the algorithm, we know that summing over all the cubes $R$ selected, stricly after $Q$, in the collection generated by $Q$ gives us
\begin{align*}
 \sum_{R \in \calS_Q} \mu(R) & = \sum_{\ell \geq 1} \left(\frac{\tilde K}{\eta}\right)^\ell \mu(Q) \\
&  \leq \frac{\tilde K}{\eta-\tilde K} \mu(Q).
\end{align*}
We then deduce that by choosing $\eta$ large enough, the selected collection is sparse.
\end{proof}

\section{Boundedness of a sparse operator}
\label{sec:weights}

\begin{definition}[$A_p$ weight] A measurable function $\omega:M \rightarrow (0,\infty)$ is an $A_p$ weight for some $p\in(1,\infty)$ if
$$ [\omega]_{A_p}:= \sup_{\textrm{ball\ } B}  \left( \aver{B} \omega \, d\mu \right) \left( \aver{B} \omega^{1-p'}\, d\mu \right)^{p-1} <\infty,$$
with $p'$ the conjugate exponent $p'=p/(p-1)$. For $p=1$, we extend this notion with the characteristic constant
$$ [\omega]_{A_1}:= \sup_{\textrm{ball\ } B} \left(\aver{B} \omega\, d\mu \right) \left(\essinf_{x\in B} \omega(x) \right)^{-1}.$$
\end{definition}

\begin{definition}[$RH_q$ weight] A measurable function $\omega:M \rightarrow (0,\infty)$ is a $RH_{q}$ weight for some $q\in(1,\infty)$ if
$$ [\omega]_{RH_q}:= \sup_{\textrm{ball\ } B} \left(\aver{B} \omega^q \, d\mu \right)^{1/q} \left(\aver{B} \omega\, d\mu \right)^{-1} <\infty.$$
For $q=\infty$, we extend this notion with the characteristic constant
$$ [\omega]_{RH_\infty}:= \sup_{\textrm{ball\ } B}  \left(\esssup_{x\in B} \omega(x) \right)  \left(\aver{B} \omega\, d\mu \right)^{-1}.$$
\end{definition}

We recall some well-known properties on the weight.

\begin{lemma} \label{lem:weight}
\begin{itemize}
\item[(a)] For $p\in(1,\infty)$ and a weight $\omega$, then $\omega\in A_p$ if and only if $\omega^{1-p'} \in A_{p'}$ with
$$ [\omega^{1-p'}]_{A_{p'}} = [\omega]_{A_p}^{p'-1}.$$
\item[(b)] (see \cite{JN}) For $q\in[1,\infty]$, $s\in[1,\infty)$ and a weight $\omega$, then $\omega \in A_q \cap RH_s$ if and only if $\omega^s \in A_{s(q-1)+1}$ with
$$ [\omega^s]_{A_{s(q-1)+1}} \leq [\omega]_{A_q}^s [\omega]_{RH_s}^s.$$
\end{itemize}
\end{lemma}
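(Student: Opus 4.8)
Both parts follow from Jensen's and H\"older's inequalities once the arithmetic of conjugate exponents is organised, so the plan is essentially bookkeeping. For (a), I would put $\sigma:=\omega^{1-p'}$ and use the identity $(1-p)(1-p')=1$ (equivalently $pp'=p+p'$) together with $(p')'=p$, which give $\sigma^{1-p}=\omega$. Evaluating the $A_{p'}$-quantity of $\sigma$ on a ball $B$ then reads $(\aver{B}\sigma)(\aver{B}\sigma^{1-p})^{p'-1}=(\aver{B}\omega^{1-p'})(\aver{B}\omega)^{p'-1}$, while raising the $A_p$-quantity of $\omega$ on $B$ to the power $p'-1$ and using $(p-1)(p'-1)=1$ produces the very same expression. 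Taking suprema over $B$ gives $\omega\in A_p\Leftrightarrow\sigma\in A_{p'}$ with $[\sigma]_{A_{p'}}=[\omega]_{A_p}^{p'-1}$, and the statement is self-dual, so no separate converse is needed.

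For (b), set $r:=s(q-1)+1$ and record the two identities $r-1=s(q-1)$ and $(\omega^s)^{1-r'}=\omega^{1-q'}$, both immediate from $1-r'=-1/(r-1)$ and $1-q'=-1/(q-1)$. For the implication $\omega\in A_q\cap RH_s\Rightarrow\omega^s\in A_r$ together with the quantitative bound, I would estimate on each ball $B$, using the definition of $RH_s$ raised to the $s$-th power,
$$\Big(\aver{B}\omega^s\Big)\Big(\aver{B}\omega^{1-q'}\Big)^{s(q-1)}\le[\omega]_{RH_s}^s\,\Big(\aver{B}\omega\Big)^s\Big[\Big(\aver{B}\omega^{1-q'}\Big)^{q-1}\Big]^s\le[\omega]_{RH_s}^s\,[\omega]_{A_q}^s,$$
and then take the supremum over $B$; the case $q=1$ is identical with $\essinf_{B}\omega$ in place of $(\aver{B}\omega^{1-q'})^{q-1}$ and $r=1$.

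For the converse I would show, ball by ball, that $\omega^s\in A_r$ forces both conditions on $\omega$. Jensen's inequality (valid since $1/s\le1$) gives $\aver{B}\omega\le(\aver{B}\omega^s)^{1/s}$; combined with the identity $(\aver{B}\omega^{1-q'})^{q-1}=[(\aver{B}(\omega^s)^{1-r'})^{r-1}]^{1/s}$ this yields directly $[\omega]_{A_q}\le[\omega^s]_{A_r}^{1/s}$. For $RH_s$, H\"older's inequality with exponents $q,q'$ applied to $1=\aver{B}\omega^{1/q}\cdot\omega^{-1/q}$ gives $(\aver{B}\omega^{1-q'})^{-(q-1)}\le\aver{B}\omega$; inserting this into the $A_r$-bound $\aver{B}\omega^s\le[\omega^s]_{A_r}(\aver{B}\omega^{1-q'})^{-s(q-1)}$ produces $(\aver{B}\omega^s)^{1/s}\le[\omega^s]_{A_r}^{1/s}\,\aver{B}\omega$, i.e. $\omega\in RH_s$. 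There is no genuine obstacle here: the only points requiring care are the tracking of the conjugate-exponent identities and the degenerate endpoints — $s=1$ is trivial, $q=1$ is handled with $\essinf$ as above, and $q=\infty$ follows from the analogous argument with any standard characterization of $A_\infty$ (or by a limiting argument, or directly from \cite{JN}).
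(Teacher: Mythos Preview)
The paper does not actually prove this lemma: it is stated as a recall of well-known properties, with part (b) attributed to \cite{JN}, and the text moves on immediately. So there is no paper proof to compare against.

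Your argument is correct and fills in the standard details cleanly. The arithmetic in (a) is right (the key point being $(p-1)(p'-1)=1$), and in (b) your forward direction reproduces exactly the quantitative bound claimed, while the converse via Jensen for $A_q$ and the H\"older trick $1\le(\aver{B}\omega)^{1/q}(\aver{B}\omega^{1-q'})^{1/q'}$ for $RH_s$ is the standard route. The only place you are a bit casual is the endpoint $q=\infty$, which you defer to \cite{JN} or a limiting argument; since the paper itself cites \cite{JN} for part (b) and never uses that endpoint, this is entirely adequate for the purposes of the lemma as used here.
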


We prove the following sharp weighted estimates for the ``sparse'' operators.

\begin{proposition} \label{prop:sparse} Let $p_0,q_0 \in[1,\infty]$ be two exponents with $p_0<q_0$, and let $p \in (p_0,q_0)$. Suppose that $S$ is a bounded operator on $L^p$ and that there exists a constant $c>0$ such that for all $f\in L^p$ and $g\in L^{p'}$ there exists a sparse collection $\mcS$ with 
 $$ \left| \langle S(f), g\rangle \right| \leq c \sum_{P\in \mcS} \left(\aver{5P} |f|^{p_0}\, d\mu \right)^{1/p_0} \left(\aver{5P} |g|^{q_0'}\, d\mu \right)^{1/q_0'} \mu(P).$$
Denote 
$$ r:=\left(\frac{q_0}{p}\right)' \left(\frac{p}{p_0}-1\right)+1 \qquad \textrm{and} \qquad \delta:= \min\{q_0',p_0(r-1)\}.$$
Then there exists a constant $C=C(S,p,p_0,q_0)$ such that for every weight $\omega \in A_{\frac{p}{p_0}} \cap RH_{\left(\frac{q_0}{p}\right)'}$, the operator $S$ is bounded on $L^p_\omega$ with
$$ \| S \|_{L^p_\omega \to L^p_\omega} \leq C  \left([\omega]_{A_{\frac{p}{p_0}}} [\omega]_{ RH_{\left(\frac{q_0}{p}\right)'}}\right)^\alpha,$$
with 
$$ \alpha:=\frac{1}{\delta} \left(\frac{q_0}{p}\right)'=\max\left\{\frac{1}{p-p_0}, \frac{q_0-1}{q_0-p}\right\}.$$
In particular, by defining the specific exponent
$$ \mathfrak{p}:=1+\frac{p_0}{q_0'} \in (p_0,q_0),$$
we have $\alpha=\frac{1}{p-p_0}$ if $p \in(p_0, \mathfrak{p}]$ and $\alpha=\frac{q_0-1}{q_0-p}$ if $p \in [\mathfrak{p},q_0)$.
\end{proposition}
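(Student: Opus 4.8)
The plan is to follow the now-standard route for sparse domination estimates, but carefully tracking the two weight characteristics $[\omega]_{A_{p/p_0}}$ and $[\omega]_{RH_{(q_0/p)'}}$. First I would reduce the problem to a single fixed sparse family: by the hypothesis it suffices to bound, uniformly over sparse collections $\mcS$, the bilinear form
\[
 \Lambda_{\mcS}(f,g) := \sum_{P\in\mcS} \Big(\aver{5P}|f|^{p_0}\,d\mu\Big)^{1/p_0}\Big(\aver{5P}|g|^{q_0'}\,d\mu\Big)^{1/q_0'}\mu(P),
\]
and then to use duality: $\|S\|_{L^p_\omega\to L^p_\omega}\lesssim \sup \Lambda_{\mcS}(f,g)$ where the supremum is over $\|f\|_{L^p_\omega}\le 1$ and $\|g\|_{L^{p'}_{\omega^{1-p'}}}\le 1$. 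It is convenient to pass to the linearized form with $F=f\omega^{1/p}$, $G=g\omega^{-1/p}\cdot\omega^{1/p}$, or more cleanly to estimate directly by inserting $\omega$ and its dual weight into the averages via Hölder's inequality at the two exponents naturally attached to the problem.

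Next I would exploit Lemma \ref{lem:weight}(b): with $s:=(q_0/p)'$ and $q:=p/p_0$, the condition $\omega\in A_{p/p_0}\cap RH_{(q_0/p)'}$ is equivalent to $\sigma:=\omega^s\in A_{s(p/p_0-1)+1}=A_r$ (this $r$ is exactly the $r$ in the statement), with $[\sigma]_{A_r}\le \big([\omega]_{A_{p/p_0}}[\omega]_{RH_{(q_0/p)'}}\big)^s$. The strategy is then to run the sparse estimate against the $A_r$ weight $\sigma$ and its dual $\sigma^{1-r'}\in A_{r'}$, using that on each $5P$ one controls $\big(\aver{5P}|f|^{p_0}\big)^{1/p_0}$ by a weighted average of $f$ times a power of $\aver{5P}\sigma^{\pm}$, and symmetrically for $g$. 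Concretely, for each $P$ I would write
\[
 \Big(\aver{5P}|f|^{p_0}\Big)^{1/p_0}\lesssim \Big(\aver{5P}\sigma\Big)^{\beta}\Big(\frac{1}{\sigma(5P)}\int_{5P}|f|^{p_0}\sigma\Big)^{1/p_0}
\]
for a suitable exponent $\beta$, and likewise expose $\sigma^{1-r'}$ in the $g$-average; the doubling property makes $\sigma(5P)\simeq\sigma(P)$ harmless. After summing, one is left with a sum over the sparse family of weighted local averages of $|f|$ and $|g|$, which is handled by the sparsity condition \eqref{eq:sparse}: choosing disjoint "stopping children" and applying the Carleson embedding / dyadic maximal function bounds of Lemma \ref{lem:max} for the measures $\sigma\,d\mu$ and $\sigma^{1-r'}\,d\mu$, one gets $\Lambda_{\mcS}(f,g)\lesssim [\sigma]_{A_r}^{\gamma}\,\|f\|_{L^p_\omega}\|g\|_{L^{p'}_{\omega^{1-p'}}}$ for an explicit $\gamma$.

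The arithmetic that needs care — and the real point of the proposition — is tracking exactly which power $\gamma$ of $[\sigma]_{A_r}$ comes out, and then translating it back through $[\sigma]_{A_r}\le([\omega]_{A_{p/p_0}}[\omega]_{RH_{(q_0/p)'}})^s$ into a power $\alpha$ of the product of the two original characteristics. This is where the asymmetry between the $A_r$ side and the $RH$ / dual $A_{r'}$ side shows up, producing $\delta=\min\{q_0',\,p_0(r-1)\}$ and hence the $\max$ in $\alpha=\tfrac1\delta(q_0/p)'=\max\{\tfrac1{p-p_0},\tfrac{q_0-1}{q_0-p}\}$; the crossover point $\mathfrak p=1+p_0/q_0'$ is simply where the two terms in the $\min$ coincide. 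I expect the main obstacle to be precisely this bookkeeping — getting the sharp exponent rather than merely a finite one, which requires applying the mixed $A_r$–$A_{r'}$ (Hytönen–Pérez-type) bound for the weighted dyadic maximal function, $\|\calM^{\mcD}_\sigma\|_{L^{r}(\sigma)\to L^{r}(\sigma)}\lesssim [\sigma]_{A_r}^{1/(r-1)}$ and its dual, on the correct side of the argument, and checking that the boundedness of $S$ on $L^p$ (assumed in the hypothesis) legitimately lets us pass from the a priori finite quantity to the quantitative bound. The equivalence of the two displayed formulas for $\alpha$ is then a direct computation from the definitions of $r$ and $\delta$.
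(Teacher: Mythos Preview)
Your overall architecture matches the paper's: pass via Lemma~\ref{lem:weight}(b) to the single $A_r$ weight $v:=\omega^{(q_0/p)'}$ (with $u:=v^{1-r'}$), rewrite each average as a weighted average times a power of $\aver{\bar P} u$ or $\aver{\bar P} v$, extract the weight characteristic, and then close with weighted dyadic maximal functions and the disjointness coming from sparseness. So the plan is on the right track.

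There is, however, a genuine confusion about \emph{where} the sharp exponent comes from. You write that the bookkeeping ``requires applying the mixed $A_r$--$A_{r'}$ (Hyt\"onen--P\'erez-type) bound for the weighted dyadic maximal function, $\|\calM^{\calD}_\sigma\|_{L^{r}(\sigma)\to L^{r}(\sigma)}\lesssim [\sigma]_{A_r}^{1/(r-1)}$''. This is not what happens, and in fact that inequality as stated is vacuous: the $\sigma$-weighted dyadic maximal function $\calM^{\calD}_\sigma$ is bounded on $L^q(\sigma\,d\mu)$ with a constant that is \emph{uniform in the weight} (this is exactly Lemma~\ref{lem:max}; it is just Doob's inequality). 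In the paper's proof the entire factor $[\omega]^\alpha$ is extracted in one line from the $A_r$ condition on $v$, namely
\[
\Big(\aver{\bar P} u\Big)^{1/p_0}\Big(\aver{\bar P} v\Big)^{1/q_0'}
= \Big[\Big(\aver{\bar P} u\Big)^{r-1}\Big(\aver{\bar P} v\Big)\Big]^{1/\delta}\cdot
\begin{cases}\big(\aver{\bar P} u\big)^{\beta}\\ \big(\aver{\bar P} v\big)^{\bar\beta}\end{cases}
\lesssim [\omega]^{\alpha}\cdot(\text{leftover})^{\le 0},
\]
where $\beta=\tfrac1{p_0}-\tfrac{r-1}{q_0'}$ and $\bar\beta=\tfrac1{q_0'}-\tfrac1{p_0(r-1)}$; which of the two leftover factors one keeps is exactly the case split $p\gtrless\mathfrak p$. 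After this step no further dependence on $[\omega]$ enters: the maximal bounds are weight-free.

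The second gap is the step you leave as ``handled by the sparsity condition \eqref{eq:sparse} \dots\ and applying the Carleson embedding / dyadic maximal function bounds''. Concretely one must convert the remaining factor $(\aver{E_P}u)^{\beta}\mu(E_P)$ (with $\beta\le 0$, or the analogous expression in $v$) into something of the form $\varpi(E_P)^{1/p'}\rho(E_P)^{1/p}$ so that H\"older over the disjoint sets $E_P$ closes. The paper does this by introducing the auxiliary weights
\[
\varpi:=\sigma\,v^{p'/q_0'},\qquad \rho:=\omega\,u^{p/p_0},
\]
which satisfy the pointwise identity $u^{-\beta}\,\varpi^{1/p'}\,\rho^{1/p}=1$; a single application of H\"older with exponents $\tfrac{\gamma}{p}+\tfrac{\gamma}{p'}+(1-\gamma)=1$, $\gamma=(1-\beta)^{-1}$, then gives $\mu(E_P)\le u(E_P)^{-\beta\gamma}\varpi(E_P)^{\gamma/p'}\rho(E_P)^{\gamma/p}$. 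This trick is the crux of the bookkeeping and is absent from your outline; without it the sum over $\mcS$ does not factor into an $L^p_\omega$-norm of $f$ times an $L^{p'}_\sigma$-norm of $g$. Your displayed formula $\big(\aver{5P}|f|^{p_0}\big)^{1/p_0}\lesssim\big(\aver{5P}\sigma\big)^{\beta}\big(\sigma(5P)^{-1}\int_{5P}|f|^{p_0}\sigma\big)^{1/p_0}$ is also not the right identity: one needs $|u^{-1/p_0}f|^{p_0}\,u$ inside the integral, not $|f|^{p_0}\sigma$, so that at the end $u^{-p/p_0}\rho=\omega$ and $v^{-p'/q_0'}\varpi=\sigma$ recover the target norms exactly.
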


\begin{rem}
The property that $p_0<\mathfrak{p}$ is equivalent to the condition $p_0<q_0$ and the fact that $\mathfrak{p}<q_0$ is also equivalent to the condition $p_0<q_0$. So the assumption guarantees us that
$$ p_0<p<q_0.$$
\end{rem}

We note that using extrapolation theory (as developed in \cite[Theorem 4.9]{AM}) and by tracking the behaviour of implicit constants with respect to the weights, then a sharp weighted estimate for one particular exponent $p\in(p_0,q_0)$ allows us to get the sharp weighted estimates for all the exponents in the range $p\in(p_0,q_0)$. Here, we are going to detail a proof which directly gives the weighted estimates for all such exponents.

\medskip

\begin{rem}
\begin{itemize}
\item In the case where $q_0=p_0'$, it is $\mathfrak{p}=2$ and we obtain sharp weighted estimates with the power
$$ \alpha=\max\{\frac{1}{p-p_0},\frac{1}{p+p_0-pp_0}\}.$$
\item In particular, in the situation where $p_0=1$ and $q_0=\infty$, we re-obtain the ``usual'' sharp behavior, dicted by the $A_2$-conjecture, with the power
$$ \alpha=\max\{1,1/(p-1)\}.$$
\item In the case $q_0=\infty$, we obtain
$$ \alpha= \max\{1,(p-p_0)^{-1} \}$$
which is the same exponent as in \cite{BCDH}, and allows to regain their result (the linear part) as explained in Subsection \ref{subsec:fourier}.
\end{itemize}
\end{rem}

\begin{rem} For a weight $\omega$, we know (see Lemma \ref{lem:weight} and \cite[Lemma 4.4]{AM}) that
$$\omega \in A_{\frac{p}{p_0}} \cap RH_{\left(\frac{q_0}{p}\right)'} \Longleftrightarrow \sigma:= \omega^{1-p'} \in A_{\frac{p'}{q_0'}} \cap RH_{\left(\frac{p_0'}{p'}\right)'}.$$
This is also equivalent to
$$ \omega^{\left(\frac{q_0}{p}\right)'} \in A_r$$
with $r:=\left(\frac{q_0}{p}\right)' \left(\frac{p}{p_0}-1\right)+1.$
We have the following estimate on the characteristic constants
$$ [\omega^{\left(\frac{q_0}{p}\right)'}]_{A_{r}} \lesssim  \left([\omega]_{A_{\frac{p}{p_0}}} [\omega]_{ RH_{\left(\frac{q_0}{p}\right)'}}\right)^{\left(\frac{q_0}{p}\right)'}$$
and
$$ [\sigma]_{A_{\frac{p'}{q_0'}}} [\sigma]_{RH_{\left(\frac{p_0'}{p'}\right)'}} \lesssim \left([\omega]_{A_{\frac{p}{p_0}}} [\omega]_{ RH_{\left(\frac{q_0}{p}\right)'}}\right)^{p'-1}.$$
 \end{rem}

\begin{proof}[Proof of Proposition \ref{prop:sparse}]
Let us define three weights $\sigma:=\omega^{1-p'}$, 
$$ u:= \sigma^{\left(\frac{p_0'}{p'}\right)'} \qquad \textrm{and} \qquad v:=\omega^{\left(\frac{q_0}{p}\right)'}.$$
So that $u=v^{1-r'}$ with 
$$ r:=\left(\frac{q_0}{p}\right)' \left(\frac{p}{p_0}-1\right)+1.$$
Following the previous remark with Lemma \ref{lem:weight}, the fact that $\omega \in A_{\frac{p}{p_0}} \cap RH_{\left(\frac{q_0}{p}\right)'}$ yields that $v\in A_r$ and so
\begin{equation*} \sup_{\textrm{ball\ } B} \left(\aver{B} v \, d\mu \right) \left( \aver{B} u \, d\mu \right)^{r-1} \leq [v]_{A_r} \lesssim [\omega]^{\left(\frac{q_0}{p}\right)'}, \end{equation*}
where we set
$$ [\omega]:=[\omega]_{A_{\frac{p}{p_0}}} [\omega]_{ RH_{\left(\frac{q_0}{p}\right)'}}$$
the characteristic constant of the weight $\omega$ in the class $A_{\frac{p}{p_0}} \cap  RH_{\left(\frac{q_0}{p}\right)'}$.
Using the comparison between dyadic subsets with balls and the doubling property of the measure $\mu$, we then deduce that
\begin{equation} \sup_{Q\in \mcD} \left(\aver{Q} v \, d\mu \right) \left( \aver{Q} u \, d\mu \right)^{r-1} \lesssim [v]_{A_r} \lesssim [\omega]^{\left(\frac{q_0}{p}\right)'}. \label{eq:w} 
\end{equation}

We know that the dual space (with respect to the measure $d\mu$) of $L^p_{\omega}$ is $L^{p'}_{\sigma}$. So the desired $L^p_\omega$-boundedness of $S$ is equivalent to the following inequality:
\begin{equation}
 \left| \langle S(f), g \rangle \right| \lesssim [\omega]^\alpha \|f\|_{L^p_\omega} \|g\|_{L^{p'}_\sigma}.
 \label{eq:amontrer}
\end{equation}

Let us fix two functions $f\in L^p_\omega$ and $g\in L^{p'}_\sigma$. Then by assumption there exists a sparse collection $\mcS$ such that 
$$ \left| \langle S(f), g\rangle \right| \leq c \sum_{P\in \mcS} \left(\aver{5P} |f|^{p_0}\, d\mu \right)^{1/p_0} \left(\aver{5P} |g|^{q_0'}\, d\mu \right)^{1/q_0'} \mu(P).$$

For every $P\in \mcS$, we know that there exists a dyadic cube $\bar P$ such that $5P \subset \bar P$ and $\mu(\bar P)\lesssim \mu(5P)$.
We split $\mcS$ into $K$ collections $(\mcS_k)_{k=1,..,K}$ for which $\bar P\in \mcD^k$. Each collection $\mcS_k$ is still sparse, since it is a sub-collection of $\mcS$.

We now fix $k \in \{1,..,K\}$.
For every $P\in \mcS_k$, we set $E_P\subset P$ the set of all $x\in P$ which are not contained in any $\mcS_k$-child of $P$. By the sparseness property of $\mcS_k$, we then have
$$ \mu(P) \leq 2 \mu(E_P)$$
and the sets $(E_P)_{P\in\mcS_k}$ are pairwise disjoint. 

So we have
\begin{equation} \left| \langle S(f), g\rangle \right| \lesssim \sum_{k=1}^K \sum_{P\in \mcS_k} \left(\aver{\bar P} |f|^{p_0}\, d\mu \right)^{1/p_0} \left(\aver{\bar P} |g|^{q_0'}\, d\mu \right)^{1/q_0'} \mu(E_P). \label{eq:11}
\end{equation}

We then change the measure with the weight $u$ as follows
\begin{align}
  \left(\aver{\bar P} |f|^{p_0}\, d\mu \right)^{1/p_0} & = \left(\aver{\bar P} |u^{-1/p_0} f|^{p_0} \, u d\mu \right)^{1/p_0} \nonumber \\
 \label{eq:eq11}   & = \left(\frac{1}{u(\bar P)} \int_{\bar P} |u^{-1/p_0} f|^{p_0} \, ud\mu \right)^{1/p_0} \left( \aver{\bar P} u\, d\mu \right)^{1/p_0} .
 \end{align}
Similarly, we have
\begin{align}
  \left(\aver{\bar P} |g|^{q_0'}\, d\mu \right)^{1/q_0'} & = \left(\aver{\bar P} |v^{-1/q_0'} g|^{q_0'} v\, d\mu \right)^{1/q_0'} \nonumber \\
 \label{eq:eq12}   & = \left(\frac{1}{v(\bar P)} \int_{\bar P} |v^{-1/q_0'} g|^{q_0'} \, vd\mu \right)^{1/q_0'} \left( \aver{\bar P} v\, d\mu \right)^{1/q_0'}.
 \end{align}

Set $\alpha:=\delta^{-1} \left(\frac{q_0}{p}\right)'$, with $\delta:= \min\{q_0',p_0(r-1)\}$ and $\beta := \frac{1}{p_0} - \frac{r-1}{q_0'}$.
Remark that $\beta\leq 0$ is equivalent to $p\geq \mathfrak{p}$ and is also equivalent to $\delta=q_0'$; whereas $\beta\geq 0$ is equivalent to $p\leq \mathfrak{p}$ and is also equivalent to $\delta=p_0(r-1)$. We are first going to detail the end of the proof in the case $\beta \leq 0$ and then explain that the situation $\beta \geq 0$ is very similar.

\medskip
\noindent
{\bf Step 1: } Case $p \geq \mathfrak{p}$ (i.e. $\beta\leq 0$). \\

Putting the two last estimates \eqref{eq:eq11} and \eqref{eq:eq12} in \eqref{eq:11} yields
\begin{align} \left| \langle S(f), g\rangle \right| 
 & \lesssim [\omega]^\alpha \sum_{k=1}^K \sum_{P\in \mcS_k} 
\left(\frac{1}{u(\bar P)} \int_{\bar P} |u^{-1/p_0} f|^{p_0} \, ud\mu \right)^{1/p_0} \nonumber \\
& \qquad \qquad \left(\frac{1}{v(\bar P)} \int_{\bar P} |v^{-1/q_0'} g|^{q_0'} \, vd\mu \right)^{1/q_0'}  \left( \aver{\bar P} u\, d\mu \right)^{\beta}  \mu(E_P), \label{eq:111}
\end{align}
where we used that
\begin{equation} \left( \aver{\bar P} u\, d\mu \right)^{(r-1)/\delta} \left( \aver{\bar P} v\, d\mu \right)^{1/\delta} \lesssim [\omega]^{\delta^{-1}\left(\frac{q_0}{p}\right)'} \label{eq:eq13} \end{equation}
which comes from \eqref{eq:w}. 

Since $\beta \leq 0$ and $E_P \subset P\subset \bar P$ with $\mu(E_p) \geq \frac{1}{2}\mu(P)\geq c_\nu \mu(\bar P)$, where $c_\nu$ is a constant only dependent on the doubling property of $\mu$ and constants of the dyadic system, we deduce that
$$ \left( \aver{\bar P} u\, d\mu \right)^{\beta} \leq c_\nu^{-\beta} \left( \aver{E_P} u\, d\mu \right)^{\beta}.$$

Then let us define two other weights $\varpi$ and $\rho$
\begin{equation} \varpi := \sigma v^{\frac{p'}{q_0'}}  \qquad \textrm{and} \qquad \rho:=\omega u^{\frac{p}{p_0}}. \label{eq:df}
\end{equation}
Since $u=v^{1-r'}$, an easy computation yields
\begin{equation} u^{-\beta} \varpi^{1/p'} \rho^{1/p}= \sigma^{1/p'} \omega^{1/p} = 1. \label{eq:poids} \end{equation}
By H\"older's inequality with $\gamma:=\frac{1}{1-\beta} \in[0,1]$ and the relation
$$ 1 = \frac{\gamma}{p} + \frac{\gamma}{p'} + (1-\gamma),$$
we have
\begin{equation} \mu(E_P) = \int_{E_P} \left(u^{-\beta} \varpi^{1/p'} \rho^{1/p} \right)^\gamma d\mu \leq u(E_P)^{-\beta \gamma} \varpi(E_P)^{\gamma/p'} \rho(E_P)^{\gamma/{p}}. \label{eq:mso}
\end{equation}
Hence
$$ \left( \aver{E_P} u\, d\mu \right)^{\beta} \mu(E_P) = u(E_P)^{\beta} \mu(E_P)^{1-\beta} \leq \varpi(E_P)^{1/p'} \rho(E_P)^{1/{p}}.$$
So coming back to \eqref{eq:111} we then deduce that
\begin{align*} 
\left| \langle S(f), g\rangle \right| & \lesssim [\omega]^\alpha \sum_{k=1}^K \sum_{P\in \mcS_k} 
\left(\frac{1}{u(\bar P)} \int_{\bar P} |u^{-1/p_0} f|^{p_0} \, ud\mu \right)^{1/p_0} \\
 &  \qquad \qquad \left(\frac{1}{v(\bar P)} \int_{\bar P} |v^{-1/q_0'} g|^{q_0'} \, vd\mu \right)^{1/q_0'} \varpi(E_P)^{1/p'} \rho(E_P)^{1/{p}}.
 \end{align*}
With the dyadic weighted maximal function (see Lemma \ref{lem:max} for its definition) and since $E_P\subset P\subset \bar P$, we deduce that
\begin{align*}
 \left| \langle S(f), g\rangle \right| & \lesssim [\omega]^\alpha \sum_{k=1}^K \sum_{P\in \mcS_k} 
\inf_{E_P} \calM_{u}^{\mcD^k} \left[|u^{-1/p_0} f|^{p_0}\right]^{1/p_0} \inf_{E_P} \calM_{v}^{\mcD^k} \left[ |v^{-1/q_0'} g|^{q_0'} \right]^{1/q_0'} \varpi(E_P)^{1/p'} \rho(E_P)^{1/{p}} \\
 & \lesssim [\omega]^\alpha \sum_{k=1}^K \sum_{P\in \mcS_k} 
\left(\int_{E_P} \calM_{u}^{\mcD^k} \left[|u^{-1/p_0} f|^{p_0}\right]^{p/p_0} \, \rho d\mu\right)^{1/p} \left(\int_{E_P} \calM_{v}^{\mcD^k} \left[ |v^{-1/q_0'} g|^{q_0'} \right]^{p'/q_0'} \, \varpi d\mu \right)^{1/p'}. 
\end{align*}
By H\"older's inequality and using the disjointness of the collection $(E_P)_{P\in \mcS_k}$, one gets
\begin{align*}
 \left| \langle S(f), g\rangle \right| & \lesssim [\omega]^\alpha  
\sum_{k=1}^K \left(\int \calM_{u}^{\mcD^k} \left[|u^{-1/p_0} f|^{p_0}\right]^{p/p_0} \, \rho d\mu\right)^{1/p} \left(\int \calM_{v}^{\mcD^k} \left[ |v^{-1/q_0'} g|^{q_0'} \right]^{p'/q_0'} \, \varpi d\mu \right)^{1/p'}. 
\end{align*}
Since $p\in(p_0,q_0)$, the dyadic maximal function $\calM_{u}^{\mcD^k}$ is $L^{p/p_0}(ud\mu)$ bounded (uniformly in the weight $u$, see Lemma \ref{lem:max}) and it is similar for the weight $v$, hence
\begin{align*}
 \left| \langle S(f), g\rangle \right| & \lesssim  [\omega]^\alpha  
\left(\int |u^{-1/p_0} f|^{p} \, \rho d\mu\right)^{1/p} \left(\int  |v^{-1/q_0'} g|^{p'} \, \varpi d\mu \right)^{1/p'}. 
\end{align*}
Due to the definition \eqref{eq:df} of $\rho,\varpi$, we conclude to
\begin{align*}
 \left| \langle S(f), g\rangle \right| & \lesssim  [\omega]^\alpha  
\left(\int |f|^{p} \, \omega d\mu\right)^{1/p} \left(\int  |g|^{p'} \, \sigma d\mu \right)^{1/p'},
\end{align*}
which corresponds to \eqref{eq:amontrer}.

\medskip
\noindent
{\bf Step 2: } Case $p \leq \mathfrak{p}$ (i.e. $\beta\geq 0$). \\

In this situation, \eqref{eq:eq13} still holds and due to the choice of $\delta$, it yields (instead of \eqref{eq:111})
\begin{align} \left| \langle S(f), g\rangle \right| 
 & \lesssim [\omega]^\alpha \sum_{k=1}^K \sum_{P\in \mcS_k} 
\left(\frac{1}{u(\bar P)} \int_{\bar P} |u^{-1/p_0} f|^{p_0} \, ud\mu \right)^{1/p_0} \nonumber \\
& \qquad \qquad \left(\frac{1}{v(\bar P)} \int_{\bar P} |v^{-1/q_0'} g|^{q_0'} \, vd\mu \right)^{1/q_0'}  \left( \aver{\bar P} v\, d\mu \right)^{\bar \beta}  \mu(E_P), \label{eq:222}
\end{align}
with 
$$ \bar \beta:= \frac{1}{q_0'}-\frac{1}{\delta}=\frac{1}{q_0'}-\frac{1}{p_0(r-1)} = -(r-1) \beta.$$
In particular, since we are in the situation $\beta \geq 0$, we know that $\bar \beta \leq 0$. We can then reproduce a similar reasoning as in the first step, using the inequality
$$ \left( \aver{\bar P} v\, d\mu \right)^{\bar \beta} \lesssim \left( \aver{E_P} v\, d\mu \right)^{\bar \beta}.$$
We use the same weights $\varpi$ and $\rho$, as defined in \eqref{eq:df}, and the exact same computations allow us to conclude, since by definition $u=v^{1-r'}$ which implies
$$ u^{-\beta} = v^{-\beta(1-r')} = v^{-\bar \beta}.$$

\end{proof}

\section{Sharpness of the weighted estimates for the `sparse operators'}

We are going to show that the exponents we obtain previously are sharp for sparse operators. We do so only for dimension $n = 1$, since higher dimensional cases follow through minor modifications.

\medskip
So let us consider the Euclidean space ${\mathbb R}$, equipped with its natural metric and measure.
We first state some easy estimates on specific weights. For $p > 1$ then the weight $w_\alpha: x \mapsto |x|^{\alpha}$ belongs to $A_p$ if and only if  $- 1 < \alpha < p -
1$. One has 
$$[w_{-1+\varepsilon}]_{A_p} \sim \varepsilon^{-1} {\text{ and }} [w_{p-1- \varepsilon}]_{A_p} \sim
\varepsilon^{-(p - 1)}$$ as $\varepsilon \rightarrow 0$.

On the other hand, when $s > 1$ then $w_{- \frac{1}{s} + \varepsilon}$ is
critical for $RH_s$. When $\varepsilon \rightarrow 0$, then 
$$\left[ w_{- \frac{1}{s} + \varepsilon} \right]_{RH_s} \sim
\varepsilon^{-\frac1{s}}.$$

\medskip
Having these sharp estimates, we are now going to prove the optimality of Proposition \ref{prop:sparse}. Considering the particular sparse collection $\mathcal{S}$ of those dyadic intervals contained
in $[0, 1]$ that contain $0$, namely $\mathcal{S}= \{ I_n := [0 ,2^{- n}] : n
\in \mathbb{N} \}$.
Then ${\mathcal S}$ is a sparse collection.
We consider sharpness in the inequality
\begin{equation}\label{sharpsum} 
\sum_{I \in {\mathcal S}} | I | \langle | f |^{p_0} \rangle^{1 /
   p_0}_I \langle | g |^{q'_0} \rangle^{1 / q'_0}_I \lesssim \Phi ([\omega]_{p_0,
   q_0, p}) \| f \|_{L^p_\omega} \| g \|_{L^{p'}_\sigma}, 
\end{equation}
where $1\leq p_0<2<q_0\leq \infty$ are fixed and where to simplify the notation we denote by 
$\langle \cdot \rangle_I$ the average on the interval $I$.

\medskip

\begin{proposition} 
For $p\in(p_0,q_0)$, there exists functions $f,g$ such that asymptotically as $r\to \infty$, the power function $\Phi (r) = r^{\alpha}$ is the best possible choice, where
$\alpha = \frac{1}{p - p_0}$ if $p \in ( p_0, \mathfrak{p}]
$ and $\alpha = \frac{q_0 - 1}{q_0 - p}$ if $p \in 
[\mathfrak{p}, q_0 )$.
\end{proposition}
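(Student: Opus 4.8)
The plan is to exhibit, for each $p \in (p_0,q_0)$, explicit power-weights $\omega$ and functions $f, g$ on $\mathbb{R}$ for which the left-hand side of \eqref{sharpsum} forces $\Phi(r) \gtrsim r^{\alpha}$ as $[\omega] \to \infty$, where $[\omega] = [\omega]_{A_{p/p_0}}[\omega]_{RH_{(q_0/p)'}}$. Since the sum in \eqref{sharpsum} is over the nested intervals $I_n = [0, 2^{-n}]$, only behaviour near $0$ matters, so the natural test weights are $\omega(x) = |x|^a$ with $a$ chosen at the edge of the admissible range, and $f, g$ of the form $|x|^b \Eins_{[0,1]}$, $|x|^c \Eins_{[0,1]}$. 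The two cases $p \le \mathfrak{p}$ and $p \ge \mathfrak{p}$ are handled separately, mirroring the two-case structure of the exponent $\alpha$ and of the proof of Proposition \ref{prop:sparse}; in the first case the binding constraint comes from the $A_{p/p_0}$ side (so one pushes $\omega$ toward $w_{-1+\varepsilon}$ in the relevant normalization, i.e. the weight degenerates like the lower endpoint), and in the second case from the $RH_{(q_0/p)'}$ side (pushing toward the reverse-Hölder-critical weight $w_{-1/s+\varepsilon}$).

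First I would record, for a power weight $\omega = w_a$, the asymptotics of $[\omega]_{A_{p/p_0}}$ and $[\omega]_{RH_{(q_0/p)'}}$ as $a$ approaches each critical value, using the elementary one-dimensional estimates stated just before the proposition; multiplying these gives $[\omega] \sim \varepsilon^{-\theta}$ for an explicit $\theta$. Next, for the chosen $f$ and $g$ I would compute the local averages $\langle |f|^{p_0}\rangle_{I_n}^{1/p_0}$ and $\langle |g|^{q_0'}\rangle_{I_n}^{1/q_0'}$ (each is a power of $2^{-n}$, up to constants, provided the relevant exponents exceed $-1$), compute $\|f\|_{L^p_\omega}$ and $\|g\|_{L^{p'}_\sigma}$ as finite constants depending on $\varepsilon$ (again powers of $\varepsilon$, blowing up as $\varepsilon \to 0$ because the integrals are near-divergent), and sum the geometric-type series $\sum_n |I_n|\, 2^{-n(\cdots)}$, which converges with value comparable to $\varepsilon^{-1}$ — this $\varepsilon^{-1}$ is the source of the sharp blow-up. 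Dividing, the ratio $\mathrm{LHS}/(\|f\|_{L^p_\omega}\|g\|_{L^{p'}_\sigma})$ is a power $\varepsilon^{-\kappa}$, and matching $\varepsilon^{-\kappa}$ against $[\omega]^{\alpha} \sim \varepsilon^{-\theta\alpha}$ pins down $\alpha = \kappa/\theta$, which should come out to $\frac{1}{p-p_0}$ when $p \le \mathfrak{p}$ and $\frac{q_0-1}{q_0-p}$ when $p \ge \mathfrak{p}$.

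The main obstacle, and the step requiring genuine care rather than routine computation, is choosing the free exponents $a$ (weight), $b$, $c$ (test functions) so that simultaneously: (i) all the local averages and all the global norms are finite — i.e. every exponent appearing in an integral stays strictly above $-1$; (ii) the series $\sum_n |I_n| \langle|f|^{p_0}\rangle_{I_n}^{1/p_0}\langle|g|^{q_0'}\rangle_{I_n}^{1/q_0'}$ converges but only marginally, so that it contributes exactly one factor $\varepsilon^{-1}$; and (iii) the competition between the $A$-characteristic and the $RH$-characteristic is resolved on the correct side for the case at hand, so that the product $[\omega]_{A_{p/p_0}}[\omega]_{RH_{(q_0/p)'}}$ has the claimed $\varepsilon$-rate and not a worse one coming from the wrong factor. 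Concretely one wants $\sigma = \omega^{1-p'}$ also to be a near-critical power weight, and the relation $u = v^{1-r'}$ from the proof of Proposition \ref{prop:sparse} suggests parametrizing everything through $v = \omega^{(q_0/p)'} = w_{a(q_0/p)'}$ sitting at the edge of $A_r$; then $b$ and $c$ are forced (up to the $\varepsilon$) by demanding that $|f|^{p_0}$ behave like $u$-a near-constant in the $\calM_u$ sense and $|g|^{q_0'}$ like $v$-near-constant, which is exactly the equality case in the Hölder and maximal-function steps of that proof. Verifying that this choice indeed saturates \emph{every} inequality used in the proof of Proposition \ref{prop:sparse} is what makes the exponent sharp, and is where I would spend the bulk of the effort; the two sub-cases $\beta \le 0$ and $\beta \ge 0$ are then symmetric under the substitution exchanging the roles of $u$ and $v$, exactly as in Step 2 of that proof.
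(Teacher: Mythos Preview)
Your overall strategy---power weights $\omega=w_a$ pushed to a critical edge, power test functions $f,g$ on $[0,1]$, then matching the $\varepsilon$-rates of the two sides---is exactly what the paper does. But one concrete choice in your plan is wrong and would derail the computation: for the case $p\le\mathfrak{p}$ you propose pushing $\omega$ to the \emph{lower} endpoint $w_{-1+\varepsilon}$ of $A_{p/p_0}$. That weight has exponent close to $-1$, and a power weight $w_a$ lies in $RH_s$ only when $a>-1/s$; with $s=(q_0/p)'$ this requires $\varepsilon>p/q_0$, so for finite $q_0$ and small $\varepsilon$ the weight $w_{-1+\varepsilon}$ is \emph{not} in $A_{p/p_0}\cap RH_{(q_0/p)'}$ at all. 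The paper instead takes the \emph{upper} endpoint $\omega_\varepsilon=w_{p/p_0-1-\varepsilon}$: this has positive exponent, hence $[\omega_\varepsilon]_{RH_{(q_0/p)'}}\sim 1$ automatically, and $[\omega_\varepsilon]_{A_{p/p_0}}\sim\varepsilon^{-(p/p_0-1)}$ gives $[\omega_\varepsilon]^{1/(p-p_0)}\sim\varepsilon^{-1/p_0}$. The matching test functions are $f_\varepsilon=x^{-1/p_0+\varepsilon}\chi_{[0,1]}$ and $g_\varepsilon=x^{-1/p_0'+\varepsilon}\chi_{[0,1]}$; note that $\langle|f_\varepsilon|^{p_0}\rangle_{I_n}^{1/p_0}$ carries a prefactor $\varepsilon^{-1/p_0}$ (not just a power of $2^n$), so the left side is $\sim\varepsilon^{-1-1/p_0}$, not $\varepsilon^{-1}$.

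For the range $p\ge\mathfrak{p}$ your instinct to push toward the $RH$-critical weight is correct, but rather than repeating the whole computation the paper gets there by a duality swap: replace $(p_0,q_0,p)$ by $(q_0',p_0',p')$, observe $\mathfrak{p}(q_0',p_0')'=\mathfrak{p}(p_0,q_0)$, and read off the sharp example from the first case applied to $\sigma=\omega^{1-p'}$. The resulting weight is $\sigma_\varepsilon=w_{-1/(q_0/p)'+(p-1)\varepsilon}$, which is indeed $RH_{(q_0/p)'}$-critical as you anticipated. Your closing remarks about parametrising through $v\in A_r$ and saturating the maximal/H\"older steps of Proposition~\ref{prop:sparse} are morally right but unnecessary once the explicit exponents are in hand; the paper simply computes.
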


Notice that for $q_0=\infty$ the above sum corresponds to the pointwise defined operator \[ S f = \sum_{I \subseteq \mathcal{D} [0, 1], 0 \in I} \langle | f^{p_0} |
   \rangle^{1 / p_0}_I \chi_I\] tested against $g$.

For convenience, we also will use the following notation (introduced in \cite{AM}): for $\omega$ a weight then
$$ [\omega]_{p_0,q_0,p}:=[\omega]_{A_{\frac{p}{p_0}}} [\omega]_{ RH_{\left(\frac{q_0}{p}\right)'}}.$$

\begin{proof} Let $p \in ( p_0, \mathfrak{p}] $. Consider functions $f_{\varepsilon} :=x\mapsto x^{- 1 / p_0 + \varepsilon} \chi_{[0, 1]}$ and $g_{\varepsilon} :=x\mapsto  x^{- 1 / p'_0 +
\varepsilon} \chi_{[0, 1]}$. One calculates for $I_n = [0, 2^{- n}]$ with $n
\geqslant 0$ that
\[ \langle | f_{\varepsilon} |^{p_0} \rangle^{1 / p_0}_{I_n} =  \frac{2^{\frac{n}{p_0}  - n
   \varepsilon}}{\left( p_0 \varepsilon \right)^{1 / p_0}} \sim \varepsilon^{- 1 /
   p_0} 2^{- n \varepsilon} 2^{n / p_0} \]
and
\[ \langle | g_{\varepsilon} |^{q'_0} \rangle^{1 / q'_0}_{I_n} = \frac{2^{\frac{n}{p'_0} - n
   \varepsilon}}{\left( 1 - \frac{q'_0}{p'_0} + q'_0 \varepsilon \right)^{1 /
   q'_0}} \sim 2^{- n \varepsilon} 2^{n / p'_0} \]
by noticing that $q'_0 / p'_0 < 1$.

Hence, we obtain for the left hand side of (\ref{sharpsum})
\[ \varepsilon^{- 1 / p_0}
   \sum^{\infty}_{n = 0} 2^{- 2 n \varepsilon} = \varepsilon^{- 1 / p_0}
   \frac{1}{1 - \left( \frac{1}{4} \right)^{\varepsilon}} \sim \varepsilon^{-
   1 / p_0} \varepsilon^{- 1} . \]

Choose the weight $\omega_{\varepsilon} = w_{\frac{p}{p_0} - 1 - \varepsilon}:=x\mapsto x^{\frac{p}{p_0} - 1 - \varepsilon}$, which is critical
for $A_{p / p_0}$ with \ $[\omega_{\varepsilon}]_{A_{\frac{p}{p_0}}} \sim
\varepsilon^{-(\frac{p}{p_0} - 1)}$ as $\varepsilon \rightarrow 0$. We also
notice that $\omega_{\varepsilon}$ is a power weight of positive exponent and therefore
$[\omega_{\varepsilon}]_{RH_{\left( \frac{q_0}{p} \right)'}} \sim 1$ as $\varepsilon
\rightarrow 0$. So $[\omega_{\varepsilon}]_{p_0,q_0, p} \sim \varepsilon^{-(\frac{p}{p_0} -
1)}$ and $[\omega_{\varepsilon}]^{\frac1{p-p_0}}_{p_0, q_0, p} \sim \varepsilon^{-\frac1{p_0}}$. 
We calculate
\[ \| f_{\varepsilon} \|_{L^p_{\omega_{\varepsilon}}} = 
\left(\int^1_0x^{-1+(p-1)\varepsilon}dx\right)^{1/p}
\sim
   \varepsilon^{- 1 / p} . \]
With $\sigma_{\varepsilon} = \omega_{\varepsilon}^{1 - p'}$ we calculate
\[ \| g_{\varepsilon} \|_{L^{p'}_{\sigma_{\varepsilon}}} = 
\left( \int^1_0 x^{(2 p' - 1) \varepsilon - 1} d x \right)^{1 / p'} \sim
   \varepsilon^{- 1 / p'} . \]

Gathering the information gives $\varepsilon^{- 1 / p} \varepsilon^{- 1
/ p'} \varepsilon^{- 1 / p_0}$ on the right hand side and $\varepsilon^{- 1}
\varepsilon^{- 1 / p_0}$ on the left, showing that the choice of $\Phi$ cannot
be improved for this range of $p$.

\medskip
Let now $p \in [\mathfrak{p}, q_0)$. To treat this
range, we apply what we have found before to the modified exponents $1 \leq q'_0 < 2 < p'_0
\leq \infty$. We have seen examples of sharpness for the sum

\[ \sum_{I \in {\mathcal S}} | I | \langle | f |^{q'_0} \rangle^{1
   / q'_0}_I \langle | g |^{p_0} \rangle^{1 / p_0}_I \sim [\omega]_{q'_0, p'_0,
   s}^{\frac{1}{s - q'_0}} \| f \|_{L^s_\omega} \| g \|_{L^{s'}_\sigma} \]
when $q'_0 \leq s \leq \mathfrak{p} (q'_0, p'_0)$. Indeed, with $f_{\varepsilon} :=x\mapsto x^{- 1 / q'_0 + \varepsilon} \chi_{[0, 1]}$, $g_{\varepsilon} :=x\mapsto x^{- 1 / q_0 +
\varepsilon} \chi_{[0, 1]}$ and $\omega_{\varepsilon} :=x\mapsto | x |^{s / q'_0 - 1 - \varepsilon}$ we
obtain for the left hand side $\sim \varepsilon^{- 1} \varepsilon^{- 1 /
q'_0}$ and $\| f_{\varepsilon} \|_{L^s_{\omega_{\varepsilon}}} \sim \varepsilon^{- 1 / s}$ and $\| g_{\varepsilon} \|_{L^{s'}_\sigma} \sim \varepsilon^{- 1 / s'}$. Now observe that $\big[\mathfrak{p} (q'_0,
p'_0)\big]' =\mathfrak{p} (p_0, q_0)$. Observe also that therefore $\mathfrak{p}
(p_0, q_0) \leqslant s' \leqslant q_0$. Using this for $s' = p$, it remains to
calculate $[\sigma_{\varepsilon}]^{\frac{q_0 - 1}{q_0 - p}}_{p_0, q_0, p}$ where $\sigma_{\varepsilon} =
\omega_{\varepsilon}^{1 - p}$. $$\sigma_{\varepsilon}(x) = | x |^{(p' / q_0' - 1 - \varepsilon) (1 - p)} = | x |^{-
1 / (q_0 / p)' + (p - 1) \varepsilon}.$$ This weight is of negative exponent
and critical for $RH_{\left( \frac{q_0}{p} \right)'}$ with
$[\sigma_{\varepsilon}]_{p_0, q_0, p} \sim \varepsilon^{- 1 / \left( \frac{q_0}{p}
\right)'}$. Therefore $[\sigma_{\varepsilon}]_{p_0, q_0, p}^{\frac{q_0 - 1}{q_0 - p}} \sim
\varepsilon^{- \frac{1}{q'_0}}$. Gathering the information, we obtain $\sim
\varepsilon^{- 1} \varepsilon^{- 1 / q'_0}$ for the left hand side and $\sim
\varepsilon^{- 1/q'_0} \varepsilon^{- 1/p} \varepsilon^{-1/p'}$ when using 
$\Phi (r) = r^{\alpha}$, showing that the estimate
cannot be improved.
\end{proof}


\begin{thebibliography}{AAA}


\bibitem{ADM}
D. Albrecht, X. Duong and A. McIntosh,
\newblock Operator theory and harmonic analysis. Instructional Workshop on Analysis and Geometry, Part III (Canberra, 1995), 
\newblock {\it Proc. Centre Math. Appl. Austral. Nat. Univ. 34} (1996) 77--136. 

\bibitem{AIS}
K. Astala, T. Iwaniec and E. Saksman,
\newblock Beltrami operators in the plane,
\newblock {\it Duke Math. J.} \textbf{107} (2001), no. 1, 27--56.


\bibitem{memoirs}
P. Auscher,
\newblock On necessary and sufficient conditions for $L^p$-estimates of Riesz transforms associated to elliptic operators on $\mathbb R^n$ and related estimates,
\newblock {\it Mem. Amer. Math. Soc.}, \textbf{871} (2007).



\bibitem{ACDH} P. Auscher, T. Coulhon, X. T. Duong, and S. Hofmann, 
 Riesz transform on manifolds and heat kernel regularity,
 \textit{Ann. Sci. Ecole Norm. Sup.}, \textbf{37} (2004), no. 4, 911--957.


\bibitem{AMcR}
P. Auscher, A. McIntosh, and E. Russ,
\newblock { Hardy spaces of differential forms on Riemannian manifolds,}
\newblock {\em J. Geom. Anal.}, \textbf{18} (2008), no. 1, 192--248.

\bibitem{AM}
P. Auscher and J.M. Martell,
\newblock{Weighted norm inequalities, off-diagonal estimates and elliptic operators. Part I: General operator theory and weights},
\newblock {\em Adv. in Math.}  \textbf{212} (2007), 225--276.

\bibitem{AM2}
P. Auscher and J.M. Martell,
\newblock{Weighted norm inequalities, off-diagonal estimates and elliptic operators. II. Off-diagonal estimates on spaces of homogeneous type},
\newblock {\em J. Evol. Equ.} \textbf{7}(2) (2007), 265--316. 

\bibitem{AM3}
P. Auscher and J.M. Martell,
\newblock{Weighted norm inequalities, off-diagonal estimates and elliptic operators. III. Harmonic analysis of elliptic operators},
\newblock {\em  J. Funct. Anal.}, \textbf{241}(2) (2006), 703--746. 

\bibitem{BBF}
I. Bailleul, F. Bernicot and D. Frey, 
\newblock Higher order paracontrolled calculus and 3d-PAM equation,
\newblock  arXiv:1506.08773.


\bibitem{BT1}
 F. Bernicot, 
\newblock  A $T(1)$-Theorem in relation to a semigroup of operators and applications to new paraproducts,
  {\em Trans. of Amer. Math. Soc.}, \textbf{364} (2012), 6071--6108. 

\bibitem{BCF2}
F. Bernicot,  T. Coulhon and D. Frey, 
\newblock  Sobolev algebras through heat kernel estimates, 
\newblock  arXiv:1505.01442.


\bibitem{BF}
F. Bernicot and D. Frey,
\newblock Riesz transforms through reverse H\"older and Poincar\'e inequalities,
\newblock  arXiv:1503.02508.

\bibitem{BM}
F. Bernicot and J.M. Martell,
\newblock Self-improving properties for abstract Poincar\'e type inequalities,
\newblock {\it Trans. Amer. Math. Soc.} \textbf{367} (2015), 4793--4835.
  
\bibitem{BZ} 
F. Bernicot and J. Zhao 
\newblock Abstract framework for John Nirenberg inequalities and applications to Hardy spaces,
\newblock {\it Ann. Sc. Norm. Super. Pisa Cl. Sci.} \textbf{11} (2012), no. 3, 475--501.
   

\bibitem{BK3}
S. Blunck and P.C. Kunstmann, 
Calder\'on-Zygmund theory for non-integral operators and the $H^\infty$ functional calculus,
{\em Rev. Mat. Iberoam.},  \textbf{19} (2003), no. 3, 919--942.


\bibitem{BCDH}
T.A. Bui, J. M. Conde-Alonso, X.T. Duong and M. Hormozi,
\newblock Weigthed bounds for multilinear operators with
non-smooth kernels,
\newblock arXiv:1506.07752.


\bibitem{Christ}
M. Christ, 
\newblock A $T(b)$ theorem with remarks on analytic capacity and the Cauchy integral,
\newblock {\it Colloq. Math.} \textbf{60/61} (1990), 601--628.


\bibitem{Duo}
J. Duoandikoetxea, 
\newblock Fourier analysis, Graduate Studies in Mathematics
\newblock {\it American Mathematical Society, Providence, RI} vol. 29, (2001).

\bibitem{DM}
X.T. Duong and A. McIntosh,
\newblock Singular integral operators with non-smooth kernels on irregular domains,
\newblock {\it Rev. Mat. Iberoam.}, \textbf{15}(2) (1999),  233--265. 

\bibitem{FKP}
R. Fefferman, C. Kenig and J. Pipher,
\newblock The theory of weights and the Dirichlet problem for elliptic equations,
\newblock {\it Ann. of Math. (2)} \textbf{134} (1991), no. 1, 65--124.

\bibitem{F}
D. Frey, 
\newblock Paraproducts via $H^\infty$-functional calculus,
\newblock {\em Rev. Mat. Iberoam.}, \textbf{29}(2) (2013), 635--663.

\bibitem{FOT} 
M. Fukushima, Y. Oshima and M. Takeda, 
\newblock Dirichlet forms and symmetric Markov processes, de Gruyter Studies in Mathematics 19, 
 \newblock {\it Walter de Gruyter}, Berlin, (1994).

\bibitem{GSC}
P. Gyrya and L. Saloff-Coste,
\newblock  Neumann and Dirichlet heat kernels in inner uniform domains,
\newblock {\it Ast{\'e}risque Soc. Math. France.}, \textbf{33}, (2011).

\bibitem{HLMMY}
S. Hofmann, G. Lu, D. Mitrea, M. Mitrea, L. Yan, 
Hardy spaces associated to non-negative self-adjoint operators satisfying Davies-Gaffney estimates,
\newblock{Mem. Amer. Math. Soc.}, \textbf{1007} (2011).

\bibitem{HMW}
R. Hunt, B. Muckenhoupt and R. Wheeden,
\newblock Weighted norm inequalities for the conjugate function and Hilbert transform,
\newblock {\it Trans. Amer. Math. Soc.} \textbf{176} (1973), 227--251.


\bibitem{Hytonen}
T. Hyt\"onen, 
\newblock The sharp weighted bound for general Calder\'on-Zygmund operators,
\newblock {\it Ann. of Math. (2)} \textbf{175} (2012), no. 3, 1473--1506.

\bibitem{Hytonen-Kairema}
T. Hyt\"onen and A. Kairema, 
\newblock Systems of dyadic cubes in a doubling metric space,
\newblock {\it Colloq. Math.} \textbf{126} (2012), no. 1, 1--33. 

\bibitem{JN}
R. Johnson and C. J. Neugebauer,
\newblock Change of variable results for $A_p$-and reverse H\"older $RH_r$-classes,
\newblock {\it  Trans. American Math. Soc.} \textbf{328} (1991), no. 2, 639--666. 

\bibitem{Kato}
T. Kato, 
\newblock  Perturbation theory for linear operators,
 \newblock {\it Springer-Verlag, New York}, (1966).

\bibitem{KZ}
S. Keith and X. Zhong,
\newblock The Poincar\'e inequality is an open ended condition,
\newblock {\it Ann. of Math.}, \textbf{167} (2008), 575--599.

\bibitem{Lacey}
M. Lacey, 
\newblock An elementary proof of the $A_2$ Bound,
\newblock arXiv:1501.05818.

\bibitem{Lerner1}
A K. Lerner,
\newblock A pointwise estimate for the local sharp maximal function with applications to singular integrals,
\newblock {\it Bull. Lond. Math. Soc.} \textbf{42} (2010), no. 5, 843--856.


\bibitem{Lerner2}
A K. Lerner,
\newblock On an estimate of Calder\'on-Zygmund operators by dyadic positive operators,
\newblock {\it J. Anal. Math.} \textbf{121} (2013), 141--161.

\bibitem{Lerner3}
A K. Lerner,
\newblock A simple proof of the $A_2$ conjecture,
\newblock {\it Int. Math. Res. Not. IMRN} (2013) \textbf{14}, 3159--3170.


\bibitem{LN}
A.K. Lerner and F. Nazarov,
\newblock Intuitive dyadic calculus: the basics,
\newblock arXiv:1508.05639.

\bibitem{LY}
P. Li and S.T. Yau,
\newblock On the parabolic kernel of the Schr\"odinger operator,
\newblock {\it Acta Math.} \textbf{156} (1986), 153--201.


\bibitem{Petermichl}
S. Petermichl,
\newblock The sharp bound for the Hilbert transform on weighted Lebesgue spaces in terms of the classical $A_p$ characteristic,
\newblock {\it Amer. J. Math.} \textbf{129} (2007), no. 5, 1355-1375.

\bibitem{PV}
S. Petermichl and A. Volberg,
\newblock Heating of the Ahlfors-Beurling operator: weakly quasi regular maps on the plane are quasiregular,
\newblock {\it Duke Math. J.} \textbf{112} (2002), no. 2, 281--305.


\bibitem{SaWh}
E. Sawyer and R. L. Wheeden,
\newblock Weighted inequalities for fractional integrals on Euclidean and
homogeneous spaces, 
\newblock {\it Amer. J. Math.} \textbf{114} (1992), no. 4, 813--874.


\bibitem{YW}
F-Y. Wang and L. Yan,
\newblock Gradient estimate on convex domains and applications,
\newblock {\it Proc. Amer. Math. Soc.} \textbf{141} (2013), no. 3, 1067--1081. 


\end{thebibliography}
\end{document}